\newtheorem{theorem}{Theorem}
\newtheorem*{theorem*}{Theorem}
\newtheorem{corollary}{Corollary}[section]
\newtheorem{lemma}{Lemma}
\newtheorem*{acknowledgements*}{Acknowledgements}
\newtheorem{remark}{Remark}
\def\blfootnote{\gdef\@thefnmark{}\@footnotetext}
\def\house#1{\setbox1=\hbox{$\,#1\,$}%
\dimen1=\ht1 \advance\dimen1 by 2pt \dimen2=\dp1 \advance\dimen2 by 2pt
\setbox1=\hbox{\vrule height\dimen1 depth\dimen2\box1\vrule}%
\setbox1=\vbox{\hrule\box1}%
\advance\dimen1 by .4pt \ht1=\dimen1
\advance\dimen2 by .4pt \dp1=\dimen2 \box1\relax}
\begin{document}
\title{Mean values and moments of arithmetic functions over number fields}
\author{Jaitra Chattopadhyay and Pranendu Darbar}
   \address[Jaitra Chattopadhyay]{Harish-Chandra Research Institute, HBNI\\
Chhatnag Road, Jhunsi\\
Allahabad 211019, India}
\address[Pranendu Darbar]{Institute of Mathematical Sciences, HBNI
\\CIT Campus, Taramani\\
Chennai-600113, India}
\email[Jaitra Chattopadhyay]{jaitrachattopadhyay@hri.res.in}
\email[Pranendu Darbar]{darbarpranendu100@gmail.com}

\begin{abstract}
For an odd integer $d > 1$ and a finite Galois extension $K/\mathbb{Q}$ of degree $d$, G. L\"{u} and Z. Yang \cite{lu3} obtained an asymptotic formula for the mean values of the divisor function for $K$ over square integers. In this article, we obtain the same for finitely many number fields of odd degree and pairwise coprime discriminants, together with the moment of the error term arising here, following the method adapted by S. Shi in \cite{shi}. We also define the sum of divisor function over number fields and find the asymptotic behaviour of the summatory function of two number fields taken together.

\end{abstract}
\maketitle

\section{Introduction}

\noindent
By an arithmetic function $f,$ we mean a function $f$ from $\mathbb{N}$ to $\mathbb{C}$. These functions appear in many branches of mathematics, particularly in analytic number theory and thus it is extremely useful to know the behaviour of them. One of the many arithmetic functions of special interest to number theorists is the {\it divisor function}, that counts the number of divisors of a natural number $n$, and is defined by the following equation.
\begin{equation*}
\tau (n)=\displaystyle\sum_{d \mid n} 1.
\end{equation*}

\vspace{2mm}
\noindent
By looking at a few small values of $\tau (n)$, we observe that the behaviour is quite irregular. But instead of $\tau (n)$, if we consider the quantity $\displaystyle\sum_{n \leq x}\tau (n)$, for a large positive real number $x$, we get a much well-behaved function of $x$. In fact, we have the following asymptotic formula
\begin{equation*}
\displaystyle\sum_{n \leq x} \tau (n) = x\log x + (2\gamma - 1)x + O(x^{\theta}),
\end{equation*}
where $\gamma$ is the Euler's constant and $\theta$ is a real number with $0<\theta < 1$. The determination of the exact order of the error term is famously known as the {\it Dirichlet divisor problem}.

\vspace{2mm}

\noindent
Likewise, we may consider analogous problems in algebraic number fields. For a number field $K$ and positive integers $k \mbox{ and } n$, it is a natural question to ask for the number of ways we can write $n$ as a product of the norms of $k$ ideals in the ring of integers $\mathcal{O}_K$ of $K$. More precisely, we define 
\begin{equation}\label{defn}
\tau_{k}^K(n) = \displaystyle\sum_{N(\mathfrak{a}_1\ldots \mathfrak{a}_k)=n}1,
\end{equation}
and study its asymptotic behaviour.
  
\vspace{2mm}

\noindent
It is easy to see from \eqref{defn} that $\tau_{k}^{K}(n)$ is a multiplicative function of $n$. The problem of finding an asymptotic formula for the function $\displaystyle\sum_{n \leq x}\tau_{k}^K(n)$ is known as the {\it k-dimensional divisor problem} in $K$. E. Panteleeva \cite{pant1} considered this problem for quadratic and cyclotomic fields, providing an asymptotic formula for both the cases as follows.

\begin{theorem} $(${\bf Quadratic field case}$)$ \cite{pant1}
Let $K=\mathbb{Q}(\sqrt{D})$ for some square-free integer $D$. Then for every integer $k \geq 1$, $$\displaystyle\sum_{n \leq x}\tau_{k}^K(n)=xP_{k}(\log x)+\theta x^{1-\frac{10}{133}k^{-\frac{2}{3}}}(C\log x)^{2k}$$ is valid for all real number $x$ satisfying $(\log x)^2 \geq D$, where $P_k$ is a polynomial of degree $k-1$, $\theta$ is a complex number satisfying $|\theta|\leq 1$ and $C$ is an absolute constant.
\end{theorem}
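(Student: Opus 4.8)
The plan is to study the Dirichlet series attached to $\tau_k^K$ and extract the asymptotics via contour integration. Since $K=\mathbb{Q}(\sqrt{D})$ is quadratic, the Dedekind zeta function factors as $\zeta_K(s)=\zeta(s)L(s,\chi)$, where $\chi=\left(\tfrac{d_K}{\cdot}\right)$ is the Kronecker symbol, a primitive real character of conductor $|d_K|\ll|D|$; in particular, under the hypothesis $(\log x)^2\ge D$ we have $|d_K|\ll(\log x)^2$. As $\tau_k^K(n)$ is the $n$-th Dirichlet coefficient of $\zeta_K(s)^k$, we obtain $\sum_{n\ge1}\tau_k^K(n)n^{-s}=\zeta(s)^kL(s,\chi)^k$ for $\Re s>1$. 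First I would record the crude bound $\tau_k^K(n)\ll_\varepsilon n^\varepsilon$, which follows from multiplicativity and $\tau_k^K(p^a)\le\binom{a+2k-1}{2k-1}$; this is needed only to control the truncation error below.

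Next, I would apply the truncated Perron formula with $c=1+1/\log x$ and a height $T$ to be chosen, obtaining
\begin{equation*}
\sum_{n\le x}\tau_k^K(n)=\frac{1}{2\pi i}\int_{c-iT}^{c+iT}\zeta(s)^kL(s,\chi)^k\,\frac{x^s}{s}\,ds+O_\varepsilon\!\Big(\frac{x^{1+\varepsilon}}{T}+x^\varepsilon\Big),
\end{equation*}
and then shift the contour to the line $\Re s=1-\delta$, where $\delta$ is a small parameter of size a fixed constant times $k^{-2/3}$ (to be pinned down at the very end). Inside the resulting rectangle the integrand is holomorphic except for a pole of order $k$ at $s=1$ coming from $\zeta(s)^k$ — here $L(s,\chi)^k$ is entire because $\chi$ is non-principal, and $x^s/s$ is regular at $s=1$ — and evaluating this residue via the Laurent expansion of $\zeta(s)$ at $s=1$ together with the Taylor expansions of $L(s,\chi)^k$, $x^{s-1}$ and $1/s$ at $s=1$ produces the main term $xP_k(\log x)$ with $\deg P_k=k-1$ and leading coefficient $L(1,\chi)^k/(k-1)!$.

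The remaining work is to estimate the two horizontal segments at height $\pm T$ and the vertical segment $\Re s=1-\delta$. Here I would invoke, uniformly for $1-\delta\le\sigma\le1$, a Richert/Vinogradov-type bound near the one-line for the Riemann zeta function, of the shape $\zeta(\sigma+it)\ll(2+|t|)^{B(1-\sigma)^{3/2}}\log(2+|t|)$, together with its analogue for $L(\cdot,\chi)$, in which the conductor enters only through factors $|d_K|^{O(1-\sigma)}$ that are negligible here precisely because $|d_K|\ll(\log x)^2$ — this is the single place where the hypothesis $(\log x)^2\ge D$ is essential. Taking $k$-th powers, on the line $\Re s=1-\delta$ the integrand is $\ll x^{1-\delta}(2+|t|)^{ck\delta^{3/2}-1+\varepsilon}(\log x)^{O(k)}$ for an absolute constant $c$, with a further gain of $x^\sigma/T$ on the horizontal segments; integrating, the shifted contour contributes $\ll x^{1-\delta}T^{ck\delta^{3/2}+\varepsilon}(\log x)^{O(k)}$.

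Finally I would optimize. The choice $T=x^{\delta/(1+ck\delta^{3/2})}$ balances this against the Perron error $x^{1+\varepsilon}/T$, so both are $\ll x^{1-\delta/(1+ck\delta^{3/2})}(\log x)^{O(k)}$; taking $\delta$ to be the optimal constant multiple of $k^{-2/3}$ makes $ck\delta^{3/2}$ a bounded constant and, once the numerical constants of the zeta bound are tracked through, yields the exponent $1-\tfrac{10}{133}k^{-2/3}$. All the accumulated powers of $\log x$ — from the residue, from the convexity/Vinogradov bounds, from $|d_K|^{O(k\delta)}\ll(\log x)^{O(k)}$, and from the $\varepsilon$'s converted into $1/\log x$ — collect into a factor bounded by $(C\log x)^{2k}$ for a suitable absolute $C$ (the exponent $2k$ being comfortably large for this bookkeeping), and writing the error as $\theta x^{1-\frac{10}{133}k^{-2/3}}(C\log x)^{2k}$ with $|\theta|\le1$ gives the statement. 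The step I expect to be the main obstacle is exactly the uniform subconvexity input: producing a Richert/Vinogradov-type bound for both $\zeta$ and $L(\cdot,\chi)$ near $\Re s=1$ with the conductor dependence explicit (so that it is harmless under $(\log x)^2\ge D$), and then pushing the numerical constants through the optimization so as to land precisely on $\tfrac{10}{133}$; by contrast, the residue computation and the endpoint estimates, although lengthy, are routine.
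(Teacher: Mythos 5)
First, a point of reference: the paper you are reading does not prove this statement at all — it is Theorem 1 of the introduction, quoted from Panteleeva \cite{pant1} as background, so there is no in-paper proof to compare yours against. Your outline is the standard (and, as far as the literature goes, essentially the correct) route to such a result: write $\sum_n \tau_k^K(n)n^{-s}=\zeta_K(s)^k=\zeta(s)^kL(s,\chi)^k$, apply truncated Perron, shift to $\Re s=1-\delta$, collect the order-$k$ pole at $s=1$ into $xP_k(\log x)$ with leading coefficient $L(1,\chi)^k/(k-1)!$, bound the shifted contour by Richert/Vinogradov-type estimates $\zeta(\sigma+it)\ll(2+|t|)^{B(1-\sigma)^{3/2}}\log(2+|t|)$ near the one-line, and optimize $\delta\asymp k^{-2/3}$. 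The identification of the hypothesis $(\log x)^2\ge D$ as the device that renders the conductor dependence of $L(\cdot,\chi)$ harmless is also the right reading of why that hypothesis is there.

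That said, there is a genuine gap, and it sits exactly where you flagged it: the theorem as stated is fully explicit — a concrete constant $\tfrac{10}{133}$, a factor $(C\log x)^{2k}$ with an absolute $C$, and $|\theta|\le1$ — whereas your argument stops at ``once the numerical constants are tracked through, this yields $1-\tfrac{10}{133}k^{-2/3}$.'' That tracking is the proof. A quick sanity check shows it is not automatic: with a bound $\zeta(\sigma+it)\ll t^{A(1-\sigma)^{3/2}}$ and $2k$ factors, your own optimization gives an exponent saving of roughly $\tfrac{1}{3}(kA)^{-2/3}$ up to the bookkeeping of the factor $2$, and matching this to $\tfrac{10}{133}k^{-2/3}$ forces a specific, rather small value of $A$, together with a version of the Richert bound for $L(s,\chi)$ that is completely explicit in $t$, in $\sigma$, and in the conductor $|d_K|\le(\log x)^2$ (not merely $\ll_\varepsilon$), plus explicit constants in the Perron truncation and the residue polynomial so that everything can be absorbed into $(C\log x)^{2k}$ with $|\theta|\le1$. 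None of this is conceptually deep, but it is the quantitative heart of Panteleeva's theorem and it is not carried out in your proposal; as written, your argument proves a statement of the form $\sum_{n\le x}\tau_k^K(n)=xP_k(\log x)+O\bigl(x^{1-ck^{-2/3}}(\log x)^{O(k)}\bigr)$ for some unspecified $c>0$, which is weaker than the cited result.
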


\begin{theorem} $(${\bf Cyclotomic field case}$)$ \cite{pant1}
Let $k$ and $t \geq 1$ be integers and let $\zeta_{t}$ be a primitive $t$-th root of unity. Then for $K=\mathbb{Q}(\zeta_{t})$, we have $$\displaystyle\sum_{n \leq x}\tau_{k}^K(n)=xP_{k}(\log x)+\theta x^{1-\frac{1}{12}(\phi (t) k)^{-\frac{2}{3}}}(C\log x)^{\phi (t)k},$$ where $P_k$ is a polynomial of degree $k-1$, $\theta$ is a complex number satisfying $|\theta|\leq 1$, $C$ is an absolute constant and $\phi$ stands for the Euler's phi-function.
\end{theorem}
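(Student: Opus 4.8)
The plan is to extract the asymptotics from the analytic structure of the generating Dirichlet series. Starting from \eqref{defn} and the multiplicativity of the ideal norm, one has, for $\Re(s)>1$,
\[
\sum_{n\ge 1}\frac{\tau_k^K(n)}{n^{s}}=\Bigl(\sum_{\mathfrak{a}}N(\mathfrak{a})^{-s}\Bigr)^{k}=\zeta_K(s)^{k}.
\]
For the cyclotomic field $K=\mathbb{Q}(\zeta_t)$ the Dedekind zeta function factors as $\zeta_K(s)=\prod_{\chi}L(s,\chi)$, the product running over the $\phi(t)$ primitive Dirichlet characters of conductor dividing $t$. Hence $\zeta_K(s)^{k}$ is a product of $\phi(t)k$ Dirichlet $L$-functions counted with multiplicity, so $0\le\tau_k^K(n)\le d_{\phi(t)k}(n)$ for every $n$ (here $d_\ell$ is the $\ell$-fold divisor function), and $\zeta_K(s)^{k}$ is meromorphic on $\mathbb{C}$ with a unique pole, of order exactly $k$, at $s=1$: the non-principal factors $L(s,\chi)$ are entire and nonvanishing at $s=1$, while $\zeta(s)$ supplies the pole.

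Next I would apply a truncated Perron formula: with $c=1+1/\log x$ and a parameter $2\le T\le x$,
\[
\sum_{n\le x}\tau_k^K(n)=\frac{1}{2\pi i}\int_{c-iT}^{c+iT}\zeta_K(s)^{k}\,\frac{x^{s}}{s}\,ds+O\!\left(\frac{x^{1+\varepsilon}}{T}\right),
\]
the error term arising from $\tau_k^K(n)\le d_{\phi(t)k}(n)$. I then shift the line of integration to $\Re(s)=\alpha:=1-\delta$ for a small $\delta>0$ to be chosen. Crossing $s=1$ picks up
\[
\operatorname*{Res}_{s=1}\zeta_K(s)^{k}\frac{x^{s}}{s}=x\,P_k(\log x),\qquad\deg P_k=k-1,
\]
with leading coefficient $\rho_K^{k}/(k-1)!$, where $\rho_K$ is the residue of $\zeta_K(s)$ at $s=1$ given by the analytic class number formula; this is the asserted main term. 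What remains is to bound the two horizontal segments at height $\pm T$ and the vertical segment on $\Re(s)=\alpha$ by $O\bigl(x^{1-\frac1{12}(\phi(t)k)^{-2/3}}(C\log x)^{\phi(t)k}\bigr)$.

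This last estimate is the crux, and the step I expect to be the main obstacle. It requires growth bounds for $|\zeta_K(\sigma+it)^{k}|=\prod_{\chi}|L(\sigma+it,\chi)|^{k}$ uniform both in $t$ and in the conductors dividing $t$. One feeds in a Vinogradov/Richert-type convexity bound of the shape $|L(\sigma+it,\chi)|\ll\bigl(q(|t|+2)\bigr)^{B(1-\sigma)^{3/2}}\log^{2/3}\bigl(q(|t|+2)\bigr)$, valid uniformly for $\tfrac12\le\sigma\le1$, where $q\mid t$ is the conductor of $\chi$; taking the product over all $\phi(t)$ characters and then the $k$-th power gives $|\zeta_K(\sigma+it)^{k}|\ll\bigl(\cdots\bigr)^{B\phi(t)k(1-\sigma)^{3/2}}(\log\cdots)^{\frac23\phi(t)k}$. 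Inserting this into the shifted contour, the vertical part is $\ll x^{1-\delta}\,T^{B'\phi(t)k\,\delta^{3/2}}(\log\cdots)^{O(\phi(t)k)}$, while the horizontal segments and the Perron tail are $\ll x^{1+\varepsilon}/T$. Choosing $T$ to be a suitable small power of $x$ and then balancing the saving $x^{-\delta}$ against the loss $T^{B'\phi(t)k\delta^{3/2}}$ forces $\delta\asymp(\phi(t)k)^{-2/3}$, hence an error exponent $1-c\,(\phi(t)k)^{-2/3}$ with an explicit constant that careful bookkeeping of the numerical inputs lets one take to be $\tfrac1{12}$; the accumulated logarithmic losses combine into a factor $(C\log x)^{\phi(t)k}$, and absorbing all absolute constants into $C$ one may write the error as $\theta\,x^{1-\frac1{12}(\phi(t)k)^{-2/3}}(C\log x)^{\phi(t)k}$ with $|\theta|\le1$. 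The delicate points are keeping every implied constant absolute (in particular independent of $t$ and $k$); ensuring the convexity input is uniform enough in the conductor that the factors $q\le t$ do not erode the log-savings (compare the restriction $(\log x)^{2}\ge|D|$ in the quadratic case); and verifying that the order-$k$ pole at $s=1$ is the only singularity crossed in the shift.

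Alternatively, the error-term analysis can be packaged by invoking the general theorems of Landau and of Chandrasekharan--Narasimhan on the average order of the coefficients of a Dirichlet series satisfying a Hecke-type functional equation, applied to $\varphi(s)=\zeta_K(s)^{k}$, whose functional equation carries $\phi(t)k$ gamma factors; these yield an error term of exactly this shape and strength, sparing one the need to re-run the contour optimization by hand.
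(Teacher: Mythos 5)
This statement is quoted by the paper from Panteleeva \cite{pant1} as background; the paper itself contains no proof of it, so I am judging your sketch against what the result actually requires (and against the method of the cited source). Your main route is the right one and is essentially Karatsuba's method as adapted by Panteleeva: the identity $\sum_n \tau_k^K(n)n^{-s}=\zeta_K(s)^k$, the factorization $\zeta_K(s)=\prod_\chi L(s,\chi)$ into $\phi(t)$ Dirichlet $L$-functions, the bound $\tau_k^K(n)\le d_{\phi(t)k}(n)$, truncated Perron, and the residue at the order-$k$ pole producing $xP_k(\log x)$ are all correct, and you correctly identify that the $m^{-2/3}$ saving (with $m=\phi(t)k$) comes from a Richert-type bound $L(\sigma+it,\chi)\ll (q(|t|+2))^{B(1-\sigma)^{3/2}}\log^{2/3}(\cdots)$ together with the balancing $T^{Bm\delta^{3/2}}\asymp x^{c\delta}$, which forces $\delta\asymp m^{-2/3}$. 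Note that this is a genuinely different mechanism from the one the present paper uses for its own theorems (Phragm\'en--Lindel\"of interpolation against Heath-Brown's bound at $\sigma=\tfrac12$), which only yields savings of order $m^{-1}$, namely $\frac{3}{md_1\cdots d_l+6}$; the $m^{-2/3}$ exponent is unreachable by convexity-type arguments and really does need the Vinogradov--Korobov input near $\sigma=1$, as you say.

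Two caveats. First, the explicit constant $\tfrac{1}{12}$ will not fall out of the off-the-shelf Richert bound (whose exponent constant is on the order of $100$, giving a saving more like $\tfrac{2}{3}(200m)^{-2/3}$); Panteleeva, following Karatsuba, reruns the Vinogradov method on the relevant exponential sums to obtain usable numerical constants, so ``careful bookkeeping of the numerical inputs'' understates the work hiding there -- though you do flag this step as the crux, which is fair for a sketch. Second, your closing ``alternatively'' paragraph is wrong and should be removed: the Landau and Chandrasekharan--Narasimhan average-order theorems for a functional equation with $M=\sum\alpha_\nu$ gamma-factor weight give an error exponent of the shape $\delta-\frac{2}{2M+1}$ type, i.e.\ a saving of order $m^{-1}$, not $m^{-2/3}$; they do not ``yield an error term of exactly this shape and strength,'' and invoking them would prove a strictly weaker theorem for $m\ge 2$.
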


\vspace{2mm}

\noindent
It is also interesting to deal with different positive integers $k_1, \ldots , k_{l-1} \mbox{ and } k_{l} \geq 2$ rather than just a single $k$. In \cite{pant2}, E. Panteleeva addressed this problem for $K=\mathbb{Q}$ and proved the following.

\begin{theorem} \cite{pant2} \label{pant}
Let $l \geq 1$ and $k_1, \ldots , k_l \geq 2$ be integers. Then $$\displaystyle\sum_{n \leq x}\tau_{k_1}(n)\ldots \tau_{k_l}(n)=xP_{m}(\log x)+\theta x^{1-\frac{2}{31}m^{-\frac{2}{3}}}(C\log x)^{m}$$ is valid for all $x$ such that $\log x \geq m$, where $m=\displaystyle\prod_{i=1}^{l}k_{i}$, $P_m$ is a polynomial of degree $m-1$, $\theta$ is a complex number satisfying $|\theta|\leq 1$ and $C$ is an absolute constant.
\end{theorem}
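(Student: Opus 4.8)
The plan is to pass to the generating Dirichlet series of $a(n):=\prod_{i=1}^{l}\tau_{k_i}(n)$, identify its pole structure, and run a contour integration; the only non-routine point will be controlling the dependence on $m:=\prod_{i=1}^{l}k_i$. Each $\tau_{k_i}$ is multiplicative, hence so is $a$, and for $\Re(s)>1$
\begin{equation*}
D(s):=\sum_{n\ge1}\frac{a(n)}{n^{s}}=\prod_{p}\Bigg(\sum_{j\ge0}\Big(\prod_{i=1}^{l}\binom{j+k_i-1}{k_i-1}\Big)p^{-js}\Bigg).
\end{equation*}
The coefficient of $p^{-s}$ in the $p$-th Euler factor equals $\prod_{i}k_i=m$, exactly as for $\zeta(s)^{m}$, so I would write $D(s)=\zeta(s)^{m}H(s)$ with $H(s)=\prod_{p}H_p(s)$ and $H_p(s)=(1-p^{-s})^{m}\sum_{j\ge0}a(p^{j})p^{-js}$. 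A short computation gives $H_p(s)=1+O(p^{-2s})$, so $H$ is holomorphic and nonvanishing for $\Re(s)>1/2$, and $D(s)$ continues there with a single pole, at $s=1$, of order exactly $m$. Equivalently $a=\tau_{m}*g$, where $g$ is the multiplicative function with $\sum_{n}g(n)n^{-s}=H(s)$; here $g(p)=0$, so $g$ is supported on powerful integers and $\sum_{n}|g(n)|n^{-\sigma}<\infty$ for every $\sigma>1/2$.

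I would first dispose of the case $l=1$, where $H\equiv1$ and the question is just the sharp form of the $m$-fold divisor problem over $\mathbb{Q}$. Apply the truncated Perron formula with $c=1+1/\log x$ and a height $T$,
\begin{equation*}
\sum_{n\le x}\tau_{m}(n)=\frac{1}{2\pi i}\int_{c-iT}^{c+iT}\zeta(s)^{m}\frac{x^{s}}{s}\,ds+O\!\left(\frac{x(\log x)^{m}}{T}\right),
\end{equation*}
and move the contour to $\Re(s)=1-\beta$ for a small parameter $\beta=\beta(m)$. The only singularity crossed is the order-$m$ pole at $s=1$, where $\operatorname{Res}_{s=1}\big(\zeta(s)^{m}x^{s}/s\big)=x\,Q_{m-1}(\log x)$ with $Q_{m-1}$ of degree $m-1$. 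On the new vertical line and the horizontal connectors I would use a Vinogradov--Korobov-type bound $\zeta(\sigma+it)\ll|t|^{A(1-\sigma)^{3/2}}(\log|t|)^{2/3}$, valid uniformly for $1/2\le\sigma\le1$, whence $|\zeta(1-\beta+it)|^{m}\ll|t|^{Am\beta^{3/2}}(\log|t|)^{2m/3}$; the choice $\beta\asymp m^{-2/3}$ keeps the exponent $Am\beta^{3/2}$ bounded. Estimating the shifted integral and the horizontal pieces and balancing against the truncation error $x(\log x)^{m}/T$ by taking $T$ an appropriate power $x^{\theta}$ then yields $\sum_{n\le y}\tau_{m}(n)=y\,Q_{m-1}(\log y)+\Delta_{m}(y)$ with $\Delta_{m}(y)\ll y^{1-\beta'}(C\log y)^{m}$ and $\beta'\asymp m^{-2/3}$; carrying the admissible constant $A$ through the optimisation is what pins the exponent to $\tfrac{2}{31}m^{-2/3}$, and the hypothesis $\log x\ge m$ is precisely what forces $(C\log x)^{m}\le\tfrac12\,x^{\frac{2}{31}m^{-2/3}}$, so the bound is a genuine error term.

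For general $l$ I would now feed this into the factorisation $a=\tau_{m}*g$:
\begin{equation*}
\sum_{n\le x}a(n)=\sum_{d\le x}g(d)\Big(\tfrac{x}{d}\,Q_{m-1}\big(\log\tfrac{x}{d}\big)+\Delta_{m}\big(\tfrac{x}{d}\big)\Big).
\end{equation*}
The main term reorganises into $x\,P_{m}(\log x)$ with $P_{m}$ again of degree $m-1$ (convergence of $\sum_{d}|g(d)|d^{-1}(\log d)^{m-1}$ takes care of completing the $d$-sum and of the resulting polynomial identity), while the error contribution is
\begin{equation*}
\ll x^{1-\frac{2}{31}m^{-2/3}}(C\log x)^{m}\sum_{d}\frac{|g(d)|}{d^{\,1-\frac{2}{31}m^{-2/3}}}.
\end{equation*}

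The step I expect to be the real obstacle is making the constant $C$ absolute, i.e. keeping every implied constant---in $H$, in the residue data at $s=1$, in Perron's formula, and in the $\zeta$-estimates---uniform in $m$ and in the individual $k_i$. In the reduction this boils down to the single delicate bound on $\sum_{d}|g(d)|d^{-\sigma}=\prod_{p}\sum_{r\ge0}|h_r|\,p^{-r\sigma}$ for $\sigma$ just below $1$, where $h_r=\sum_{0\le j\le r}(-1)^{r-j}\binom{m}{r-j}a(p^{j})$. Estimating the $h_r$ termwise wastes a power of $m$ at every prime and is useless; instead one keeps the Euler factor intact, $\sum_{r}|h_r|\,p^{-r\sigma}\le(1+p^{-\sigma})^{m}\sum_{j\ge0}a(p^{j})p^{-j\sigma}$, and sums the logarithms over all primes---crucially including the small primes $p\lesssim m$, where $H_p$ is far from $1$---to arrive at $\sum_{d}|g(d)|d^{-\sigma}\le\exp\!\big(O(m\log\log m)\big)$, a bound that $(C\log x)^{m}$ swallows exactly because $\log x\ge m$. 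The same uniform bookkeeping is what lets the contour argument of the second step produce an absolute $C$ rather than one depending on $m$.
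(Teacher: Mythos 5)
A point of order first: Theorem~\ref{pant} is quoted from Panteleeva \cite{pant2}; the paper contains no proof of it, so your argument can only be judged on its own terms. Your architecture --- write $\sum_n a(n)n^{-s}=\zeta(s)^mH(s)$, establish the uniform $m$-dimensional divisor estimate via Perron and a Vinogradov--Korobov bound $\zeta(\sigma+it)\ll|t|^{A(1-\sigma)^{3/2}}(\log|t|)^{2/3}$ with the shift $\beta\asymp m^{-2/3}$, then convolve --- is the standard Karatsuba-style route and does explain the shape of the exponent; you also correctly locate the difficulty in the uniformity in $m$.

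However, your proposed resolution of that difficulty does not close it, and this is a genuine gap. You bound $\sum_d|g(d)|d^{-\sigma}$ at $\sigma=1-O(m^{-2/3})$ by majorising each Euler factor, which at a prime $p\le m$ costs a factor of size roughly $\bigl(\tfrac{1+p^{-\sigma}}{1-p^{-\sigma}}\bigr)^{m}=\exp\bigl(O(mp^{-1})\bigr)$; multiplying over $p\lesssim m$ gives, as you say, $\exp\bigl(O(m\log\log m)\bigr)=(\log m)^{O(m)}$. You then assert that $(C\log x)^m$ swallows this because $\log x\ge m$. It does not: absorbing a factor $(\log m)^{Bm}$ into $(C\log x)^m$ with $C$ \emph{absolute} would require $C_0(\log m)^{B}\le C$ uniformly in $m$, which is false; and the only other slack available, namely proving an intermediate exponent $\beta'>\tfrac{2}{31}m^{-2/3}$ and spending $x^{\beta'-\frac{2}{31}m^{-2/3}}\ge\exp(cm^{1/3})$ in the worst case $\log x=m$, is far too small to cover $\exp(cm\log\log m)$. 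As written, your error term is therefore $x^{1-\frac{2}{31}m^{-2/3}}(C\log x)^{m}(\log m)^{O(m)}$, not the claimed one. Repairing this needs either a bound $\sum_d|g(d)|d^{-\sigma}\le e^{O(m)}$ --- which requires genuine cancellation in $g(p^r)$ at the primes $p\le m$, beyond the total-variation majorant $(1+p^{-\sigma})^m\sum_j a(p^j)p^{-j\sigma}$ --- or a reorganisation that never bounds $H$ prime by prime near $\sigma=1$ (e.g.\ Karatsuba's more combinatorial treatment with explicit constants). Two smaller, fixable omissions in the same step: the $l=1$ estimate is only available for $\log(x/d)\ge m$, so the tail $d>xe^{-m}$ of the convolution sum needs a separate trivial-bound treatment; and the Perron remainder involves $\max_{n\le 2x}\tau_m(n)$, which must itself be controlled uniformly in $m$ using $\log x\ge m$.
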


\vspace{2mm}

\noindent
In \cite{deza}, E. Deza and L. Varukhina extended Theorem \ref{pant}  to quadratic and cyclotomic fields. Later, G. L\"{u} \cite{lu1} generalized their results to a finite Galois extension of $\mathbb{Q}$ as follows.

\begin{theorem} \cite{lu1} \label{l1}
Let $K/\mathbb{Q}$ be a Galois extension of degree $d$. Then for integers $l \geq 1$, $k_1, \ldots , k_l \geq 2$ and any $\epsilon > 0$, we have $$\displaystyle\sum_{n \leq x}\tau^K_{k_1}(n)\ldots \tau^K_{k_l}(n)=xP_{m}(\log x)+O(x^{1-\frac{3}{md+6}+\epsilon}),$$ where $m=k_1\ldots k_ld^{l-1}$ and $P_m$ is a polynomial of degree $m-1$.
\end{theorem}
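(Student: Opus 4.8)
The plan is to factor out an explicit power of the Dedekind zeta function from the generating Dirichlet series and thereby reduce the problem to a divisor-type estimate for $\zeta_K$. Set
\[
D(s):=\sum_{n\ge1}\tau^K_{k_1}(n)\cdots\tau^K_{k_l}(n)\,n^{-s}.
\]
The claim I would establish first is the factorisation $D(s)=\zeta_K(s)^{m}\,G(s)$, where $G(s)=\sum_{e\ge1}g(e)e^{-s}$ is a Dirichlet series converging absolutely and remaining bounded in the half-plane $\Re(s)>\tfrac12$, with $G(1)\ne0$. Granting this, $D(s)$ has a pole of exact order $m$ at $s=1$ (as $\zeta_K$ has a simple pole there), and this is precisely the source of a main term equal to $x$ times a polynomial of degree $m-1$ in $\log x$.

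To obtain the factorisation, note that each $\tau^K_{k_i}$ is multiplicative, hence so is the product, so $D(s)=\prod_p D_p(s)$ with $D_p(s)=\sum_{a\ge0}\tau^K_{k_1}(p^a)\cdots\tau^K_{k_l}(p^a)p^{-as}$, the coefficients here being bounded polynomially in $a$, uniformly in $p$. The crucial point is the local comparison of $D_p(s)$ with $\zeta_{K,p}(s)^{m}$. Since $K/\mathbb{Q}$ is Galois, all primes of $\mathcal{O}_K$ above an unramified rational prime $p$ share a common residue degree; hence $p$ either splits completely, contributing exactly $d$ primes of norm $p$, or it has no prime of norm $p$ at all. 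Writing $a_1(p)$ for the number of degree-one primes above $p$, one checks $\tau^K_{k_i}(p)=k_i\,a_1(p)$, so the coefficient of $p^{-s}$ in $D_p(s)$ is $(k_1\cdots k_l)\,a_1(p)^{l}$, while the coefficient of $p^{-s}$ in $\zeta_{K,p}(s)^{m}$ is $m\,a_1(p)=(k_1\cdots k_l\,d^{l-1})\,a_1(p)$; since $a_1(p)\in\{0,d\}$ for unramified $p$, these coincide. Consequently $G_p(s):=D_p(s)\,\zeta_{K,p}(s)^{-m}=1+O(p^{-2s})$ for all but the finitely many ramified $p$ (with the coefficient at each $p^{-as}$ again bounded polynomially in $a$), and the finitely many ramified factors of $G$ are holomorphic on $\Re(s)>0$. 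Taking the product over $p$ yields $G(s)$ with the stated properties.

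Next I would prove the $m$-dimensional divisor estimate for $K$,
\[
\sum_{n\le y}\tau^K_m(n)=y\,R_m(\log y)+O\!\left(y^{1-\frac{3}{md+6}+\epsilon}\right),\qquad\deg R_m=m-1,
\]
by applying truncated Perron's formula to $\zeta_K(s)^{m}$, shifting the contour past the order-$m$ pole at $s=1$ (whose residue produces $y\,R_m(\log y)$), and bounding the shifted integral using the standard growth of $\zeta_K$ on vertical lines — the convexity bound $\zeta_K(\sigma+it)\ll_\epsilon(1+|t|)^{\frac d2(1-\sigma)+\epsilon}$ together with a mean-value estimate for $|\zeta_K(\sigma+it)|^{2}$ — the exponent emerging from optimising the abscissa of the shifted line against the truncation height. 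Finally, since $\tau^K_{k_1}(n)\cdots\tau^K_{k_l}(n)=\sum_{de=n}\tau^K_m(d)\,g(e)$, summing over $n\le x$ gives
\[
\sum_{n\le x}\tau^K_{k_1}(n)\cdots\tau^K_{k_l}(n)=\sum_{e\le x}g(e)\!\left(\frac{x}{e}R_m\!\Big(\log\frac{x}{e}\Big)+O\!\Big((x/e)^{1-\frac{3}{md+6}+\epsilon}\Big)\right).
\]
The main term is $x\sum_{e\le x}\frac{g(e)}{e}R_m(\log x-\log e)$, which on extending the sum to all $e\ge1$ (the tail $e>x$ contributing only $O(x^{1/2+\epsilon})$) equals $xP_m(\log x)$ for a polynomial $P_m$ of degree $m-1$ — the series converges because $|g(e)|e^{-1}(\log e)^j$ is summable, and the leading coefficient is a nonzero multiple of $G(1)$. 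The error term is $\ll x^{1-\frac{3}{md+6}+\epsilon}\sum_{e}|g(e)|e^{-(1-\frac{3}{md+6}+\epsilon)}\ll x^{1-\frac{3}{md+6}+\epsilon}$, since $1-\frac{3}{md+6}>\tfrac12$ ensures convergence of the last sum. This proves the theorem.

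The principal obstacle is the analytic heart of the third step: deriving the error term $O(y^{1-\frac{3}{md+6}+\epsilon})$ for $\zeta_K(s)^m$ requires genuinely non-trivial control of $\zeta_K(\sigma+it)$ — both pointwise and in mean over $t$ — on lines with $\tfrac12<\sigma<1$, together with a careful balancing in Perron's formula. By comparison, the Euler-product factorisation $D(s)=\zeta_K(s)^m G(s)$ and the convolution argument are routine once the local computation above has been carried out.
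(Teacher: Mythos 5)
Your overall architecture coincides with the template this paper uses for its own Theorem~\ref{main-th} (and which L\"u uses for the cited result): factor the generating Dirichlet series as $\zeta_K(s)^m\cdot G(s)$ with $G$ absolutely convergent in $\Re(s)>\tfrac12$ via a local comparison at unramified primes, then run truncated Perron with a contour shift to $\Re(s)=\tfrac12+\epsilon$. Your local computation is correct and is exactly the content of Lemma~\ref{lu-acta-math-hungarica} here: $\tau^K_{k}(p)=k\,a_K(p)$ and $a_K(p)\in\{0,d\}$ for unramified $p$ in a Galois extension, whence $\prod_i\tau^K_{k_i}(p)=m\,a_K(p)$. Your detour through $\sum_{n\le y}\tau^K_m(n)$ followed by Dirichlet convolution with $g$ is a harmless variant of applying Perron directly to $\zeta_K^m(s)G(s)$; the tail estimates you sketch for the convolution go through because the abscissa $\tfrac12$ of $G$ lies below $1-\frac{3}{md+6}$.

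The genuine gap is in the step you yourself call the analytic heart. The tools you name --- the convexity bound $\zeta_K(\sigma+it)\ll(1+|t|)^{\frac d2(1-\sigma)+\epsilon}$ and an unspecified mean-value estimate for $|\zeta_K|^2$ --- do not produce the exponent $\frac{3}{md+6}$. Convexity gives $\zeta_K(\tfrac12+it)\ll t^{d/4+\epsilon}$, so the shifted integral is $\ll x^{1/2+\epsilon}T^{md/4+\epsilon}$, and balancing against $x^{1+\epsilon}/T$ yields only $O\bigl(x^{1-\frac{2}{md+4}+\epsilon}\bigr)$, which is strictly weaker. A second-moment bound $\int_1^T|\zeta_K(\tfrac12+it)|^2\,dt\ll T^{1+\epsilon}$ is not a known unconditional result for general degree $d\ge 3$ (the approximate functional equation has length $t^{d/2}\gg T$), and even granting it, the hybrid bound $\sup|\zeta_K|^{m-2}\cdot\int|\zeta_K|^2$ gives $T^{(m-2)d/4+\epsilon}$, which beats $T^{md/6}$ only when $m\le 6$. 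The ingredient that actually produces the ``$6$'' in the denominator is Heath-Brown's subconvexity estimate $\zeta_K(\tfrac12+it)\ll t^{d/6+\epsilon}$ (Lemma~\ref{heath-brown} in this paper), which via Phragm\'en--Lindel\"of gives $\zeta_K(\sigma+it)\ll(1+|t|)^{\frac d3(1-\sigma)+\epsilon}$ on $\tfrac12\le\sigma\le1$; then the shifted integral is $\ll x^{1/2+\epsilon}T^{md/6+\epsilon}$ and the choice $T=x^{3/(md+6)}$ closes the argument. Without citing that (nontrivial) input, your proof establishes the theorem only with a weaker error exponent.
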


\noindent
Later, G. L\"{u} and W. Ma \cite{lu-ma} extended Theorem \ref{l1} for several number fields and proved the following.

\begin{theorem} \cite{lu-ma}
Let $l \geq 1$ and $k_1, \ldots, k_l \geq 2$ be given integers. For each $i \in \{1, 2, \ldots , l\}$, let $K_i$ be a number field having discriminant $D_i$. Suppose that $\gcd(D_i, D_j)=1$ for $i \neq j$. Then for any $\epsilon > 0$, we have $$\displaystyle\sum_{n \leq x}\tau^{K_1}_{k_1}(n)\ldots \tau^{K_l}_{k_l}(n)=xP_{m}(\log x)+O(x^{1-\frac{3}{md_1\ldots d_l+6}+\epsilon}),$$ where $m=k_1\ldots k_ld^{l-1}$ and $P_m$ is a polynomial of degree $m-1$.
\end{theorem}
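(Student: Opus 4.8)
The plan is to route everything through the generating Dirichlet series and to extract the asymptotics by Perron's formula, exactly as in L\"u's proof of Theorem~\ref{l1} for a single Galois field, the genuinely new ingredient being the coprimality hypothesis $\gcd(D_i,D_j)=1$. Put $a(n)=\tau^{K_1}_{k_1}(n)\cdots\tau^{K_l}_{k_l}(n)$ and $D(s)=\sum_{n\ge1}a(n)n^{-s}$. Each $\tau^{K_i}_{k_i}$ is multiplicative with $\sum_n\tau^{K_i}_{k_i}(n)n^{-s}=\zeta_{K_i}(s)^{k_i}$, so $a$ is multiplicative and $D(s)=\prod_p D_p(s)$, where $D_p(s)=\sum_{j\ge0}\big(\prod_{i=1}^{l}\tau^{K_i}_{k_i}(p^j)\big)p^{-js}$ is a rational function of $p^{-s}$; equivalently, $D(s)$ is the coefficientwise (Rankin--Selberg) product of the series $\zeta_{K_i}(s)^{k_i}$.

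The first and main step is a usable factorisation of $D(s)$. For all but finitely many $p$ the prime is unramified in every $K_i$, and then $\tau^{K_i}_{k_i}(p^j)$ depends only on the decomposition type of $p$ in $K_i$, i.e.\ on the Frobenius class of $p$ in $\mathrm{Gal}(\widetilde K_i/\mathbb Q)$, where $\widetilde K_i$ is the Galois closure of $K_i$. Coprimality of the discriminants enters twice: the primes ramifying in distinct $K_i$ are then disjoint, so the bad Euler factors combine into a harmless finite product; and $\widetilde K_1,\dots,\widetilde K_l$ become pairwise linearly disjoint (their discriminants are coprime, so the intersections are unramified over $\mathbb Q$, hence trivial), so by the Chebotarev density theorem in the compositum $\widetilde K_1\cdots\widetilde K_l$ the Frobenius classes of $p$ in the various $\mathrm{Gal}(\widetilde K_i/\mathbb Q)$ behave independently. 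Taking logarithms of the Euler factors, summing over $p$, and matching against logarithmic Euler products of Dedekind zeta functions, one arrives at an identity
$$D(s)=\Big(\prod_{j}\zeta_{L_j}(s)^{e_j}\Big)\,H(s),$$
where the $L_j$ are finitely many subfields of $\widetilde K_1\cdots\widetilde K_l$, the integers $e_j\ge1$ satisfy $\sum_j e_j=m:=k_1\cdots k_l$ and $\sum_j e_j[L_j:\mathbb Q]=m\,d_1\cdots d_l$, and $H(s)$ is given by an absolutely convergent Euler product — hence holomorphic and bounded — in the half-plane $\mathrm{Re}(s)>\tfrac12+\epsilon$, also absorbing the ramified factors and all contributions from prime powers $p^{j}$ with $j\ge2$.

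With the factorisation in hand the analytic part is standard. Each $\zeta_{L_j}(s)$ is holomorphic on $\mathrm{Re}(s)\ge1$ except for a simple pole at $s=1$, so $D(s)$ is holomorphic there apart from a pole of order exactly $m$ at $s=1$. Applying the truncated Perron formula to $\sum_{n\le x}a(n)$, moving the contour to $\mathrm{Re}(s)=\sigma_0$ with $\tfrac12<\sigma_0<1$, and collecting the residue at $s=1$ yields the main term $x\,P_m(\log x)$ with $\deg P_m=m-1$, whose coefficients are explicit in the Laurent expansions of the $\zeta_{L_j}$ and of $H$ at $s=1$. For the error one bounds $D$ on the line $\mathrm{Re}(s)=\sigma_0$ by feeding in, factor by factor, the subconvexity estimate $\zeta_{L}(\sigma_0+it)\ll(1+|t|)^{\frac{[L:\mathbb Q]}{3}(1-\sigma_0)+\epsilon}$ (for $L=\mathbb Q$ this is the Weyl/van der Corput bound $\zeta(\tfrac12+it)\ll|t|^{1/6+\epsilon}$ interpolated against $\zeta(1+it)\ll\log|t|$), giving $D(\sigma_0+it)\ll(1+|t|)^{\frac{m d_1\cdots d_l}{3}(1-\sigma_0)+\epsilon}$; the total error is then $\ll x^{1+\epsilon}/T+x^{\sigma_0}T^{\frac{m d_1\cdots d_l}{3}(1-\sigma_0)+\epsilon}$, and balancing the two terms while letting $\sigma_0\to\tfrac12$ produces exactly $O\big(x^{1-\frac{3}{m d_1\cdots d_l+6}+\epsilon}\big)$.

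The principal obstacle is Step~1: one must verify that the independence of the Frobenius classes forced by $\gcd(D_i,D_j)=1$ genuinely collapses the Euler product of $D(s)$ into Dedekind zeta functions with the stated pole order $m$ and total degree $m\,d_1\cdots d_l$, and that the remainder $H(s)$ extends holomorphically past $\mathrm{Re}(s)=1$ all the way to $\mathrm{Re}(s)>\tfrac12$ — a spurious pole surviving to the line $\mathrm{Re}(s)=1$ would change the degree of $P_m$, and weak control of $H$ in the strip would worsen the exponent. A secondary, more technical point is securing the vertical-line bound for $D$ in the critical strip: for the non-abelian Dedekind zeta factors one must invoke the best available subconvexity input, and it is precisely this input that pins down the shape $3/(m d_1\cdots d_l+6)$ of the error exponent. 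Once Steps~1–3 are in place the residue computation and the contour shift are routine.
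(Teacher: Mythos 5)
This statement is quoted from \cite{lu-ma}; the present paper does not prove it, but it proves the closely parallel Theorem \ref{main-th} by exactly the strategy you outline, so your sketch should be measured against that. Your overall architecture --- multiplicativity and an Euler-product factorisation of the generating Dirichlet series, truncated Perron (Lemma \ref{perron}), a contour shift to $\mathrm{Re}(s)=\tfrac12+\epsilon$, Heath-Brown's bound (Lemma \ref{heath-brown}) interpolated by Phragm\'en--Lindel\"of (Lemma \ref{phragman-lindeloff}), and the balancing choice of $T$ --- is the same as the paper's, and the analytic half of your argument is sound.

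The problem is that you leave your ``Step 1'' --- the factorisation $D(s)=\prod_j\zeta_{L_j}(s)^{e_j}H(s)$ with $H$ holomorphic and bounded on $\mathrm{Re}(s)>\tfrac12$ --- as an acknowledged obstacle, to be reached via Chebotarev independence of Frobenius classes in the Galois closures, and you do not carry it out. This is precisely where the method has a short, concrete closure that you should use instead: for any prime $p$ one has $\tau_{k_i}^{K_i}(p)=k_i\,a_{K_i}(p)$ (exactly one of the $k_i$ ideals must have norm $p$), and by Lemma \ref{compositum-lemma} the hypothesis $\gcd(D_i,D_j)=1$ gives $a_{K_1\cdots K_l}(p)=a_{K_1}(p)\cdots a_{K_l}(p)$, so the coefficient of $p^{-s}$ in $D(s)$ equals $k_1\cdots k_l\,a_{K_1\cdots K_l}(s)$'s counterpart, i.e.\ agrees with that of $\zeta_{K_1\cdots K_l}(s)^{k_1\cdots k_l}$ for all but finitely many $p$. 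Lemma \ref{Dirichlet-series-comparison} then yields $D(s)=\zeta_{K_1\cdots K_l}(s)^{k_1\cdots k_l}\,U(s)$ with $U$ absolutely convergent on $\mathrm{Re}(s)>\tfrac12$, with no case analysis of ramified primes or higher prime powers and with a single field $L=K_1\cdots K_l$ of degree $d_1\cdots d_l$ in place of your unspecified family $\{L_j\}$. Note finally that your pole order $\sum_je_j=k_1\cdots k_l$ does not match the $m=k_1\cdots k_l\,d^{l-1}$ appearing in the statement as transcribed; the computation just described supports your value (the factor $d^{l-1}$ belongs to the single-field Theorem \ref{l1}, where $a_K(p)^l=d^{l-1}a_K(p)$ intervenes, and there is in any case no single $d$ in the multi-field setting), so the discrepancy lies in the transcription rather than in your argument --- but a complete proof must confront this explicitly rather than silently assert a value of $m$ that conflicts with the claim being proved.
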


\vspace{2mm}

\noindent
From the formula $$\displaystyle\sum_{n \leq x}\tau (n)=\displaystyle\sum_{n \leq x}\displaystyle\sum_{d \mid n} 1 = \displaystyle\sum_{ab \leq x}1,$$ one sees that the above expression counts the number of lattice points lying in the first quadrant as well as below the hyperbola $XY=x$. Similarly, the quantity $$\displaystyle\sum_{n \leq x}\tau (n^2)=\displaystyle\sum_{n \leq x}\displaystyle\sum_{d \mid n^2} 1 = \displaystyle\sum_{n \leq x}\displaystyle\sum_{ab=n^2} 1$$ counts the number of lattice points lying in the first quadrant as well as below the hyperbola $XY=x$, where $x$ is a perfect square. This geometric interpretation motivates to study the asymptotic behaviour of divisor functions over square integers. Along this direction, G. L\"{u} and Z. Yang \cite{lu3} proved the following.

\begin{theorem} \cite{lu3} \label{l3}
Let $d \geq 3$ be an odd integer and let $K$ be a Galois extension of degree $d$ over $\mathbb{Q}$. Then for any integer $k \geq 2$ and any $\epsilon > 0$, we have $$\displaystyle\sum_{n \leq x}\tau^{K}_{k}(n^2)=xP_m(\log x)+O(x^{1-\frac{3}{md+6}+\epsilon}),$$ where $m=\frac{k^2d+k}{2}$ and $P_m$ is a polynomial of degree $m-1$.
\end{theorem}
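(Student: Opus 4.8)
\noindent
The plan is to reduce the theorem to a single Dirichlet series identity and then feed it into the analytic machinery of \cite{lu1}. Put $D(s):=\sum_{n\ge1}\tau_k^K(n^2)n^{-s}$. Since $\tau_k^K$ is multiplicative, so is $n\mapsto\tau_k^K(n^2)$, hence $D(s)=\prod_pL_p(s)$ with $L_p(s)=\sum_{a\ge0}\tau_k^K(p^{2a})p^{-as}$. The heart of the proof is the claim
\[
D(s)=\zeta_K(s)^m\,G(s),\qquad m=\frac{k^2d+k}{2},
\]
where $\zeta_K$ is the Dedekind zeta function of $K$ and $G(s)=\prod_pU_p(s)$ is an Euler product which converges absolutely — hence defines a holomorphic, bounded function — in the half-plane $\Re s>\tfrac12$.

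To prove this I would read off $L_p(s)$ from the splitting of $p$ in $K$. For an unramified prime $p$ with residue degree $f\mid d$ and $g=d/f$ primes above it, every ideal of $p$-power norm is a product of those $g$ primes, so $\tau_k^K(p^{2a})=\binom{2a/f+kg-1}{kg-1}$ when $f\mid 2a$, and $0$ otherwise. Here the oddness of $d$ is decisive: every divisor $f$ of $d$ is then odd, so $f\mid 2a\iff f\mid a$, and writing $a=fb$ gives $L_p(s)=\sum_{b\ge0}\binom{2b+kg-1}{kg-1}p^{-fbs}$. Since the Euler factor of $\zeta_K(s)^m$ at such a $p$ is $(1-p^{-fs})^{-mg}$, we get $D(s)=\zeta_K(s)^mG(s)$ with $G(s)=\prod_pU_p(s)$, where $U_p(s):=L_p(s)(1-p^{-fs})^{mg}$ at unramified $p$ and $U_p(s)$ is a fixed rational function of $p^{-s}$, holomorphic for $\Re s>0$, at the finitely many ramified primes; it remains to show $\prod_pU_p(s)$ converges absolutely for $\Re s>\tfrac12$. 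When $p$ is totally split ($f=1$, $g=d$) one has $mg-kg=md-kd=\binom{kd}{2}\ge0$, and the identity $\sum_{b\ge0}\binom{2b+N-1}{N-1}y^b=\tfrac12\bigl((1-\sqrt{y})^{-N}+(1+\sqrt{y})^{-N}\bigr)$ shows that $U_p(s)$ is an even polynomial in $p^{-s/2}$, hence a polynomial in $p^{-s}$, with constant term $1$; moreover its coefficient of $p^{-s}$ vanishes, since the coefficient of $p^{-s}$ in $L_p(s)$ equals $\binom{kd+1}{2}=md$, which is also the coefficient of $p^{-s}$ in $(1-p^{-s})^{-md}$. Hence $U_p(s)=1+O(p^{-2s})$ at split primes. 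For an unramified prime with $f>1$ — necessarily $f\ge3$, since $d$ is odd — one finds $U_p(s)=1+O(p^{-3s})$. Thus the split primes are the only constraint, and $\prod_pU_p(s)$ converges absolutely for $\Re s>\tfrac12$. The value $m=(k^2d+k)/2$ is forced precisely by the matching of $p^{-s}$-coefficients at the split primes, and one checks $G(1)=\prod_pU_p(1)>0$, so that the main term below is genuine.

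Granting this identity, the remainder is the argument of \cite{lu1}. Since $K/\mathbb{Q}$ is Galois, $\zeta_K(s)$ has a simple pole at $s=1$ and is otherwise holomorphic, so $D(s)=\zeta_K(s)^mG(s)$ is holomorphic for $\Re s>\tfrac12$ apart from a pole of order $m$ at $s=1$. Together with the growth bounds for $\zeta_K$ in the critical strip used in \cite{lu1} — which for $\zeta_K(s)^m$ are controlled by the parameter $md$ — one applies the truncated Perron formula to $D(s)x^s/s$, shifts the contour from $\Re s=1+1/\log x$ to $\Re s=\tfrac12+\epsilon$, collects the residue at $s=1$ (a pole of order $m$, producing $xP_m(\log x)$ with $\deg P_m=m-1$), and estimates the shifted integral, the horizontal segments and the Perron truncation error. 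Optimizing the truncation height exactly as in \cite{lu1} yields the stated error term $O\!\left(x^{1-\frac{3}{md+6}+\epsilon}\right)$.

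The main obstacle is the Dirichlet series computation above, and inside it the claim that $G(s)$ extends holomorphically to $\Re s>\tfrac12$. This rests on (i) the precise value of $m$, pinned down by the split-prime $p^{-s}$-coefficients, and (ii) the harmlessness of the primes with $f>1$, which is exactly where $d$ being odd is used: for even $d$ an inert prime has $f=2$, so $\tau_k^K(p^2)\ne0$, and the clean factorization through $\zeta_K(s)^m$ collapses. Once $D(s)=\zeta_K(s)^mG(s)$ is in place, no analytic input beyond that already available in \cite{lu1} is needed.
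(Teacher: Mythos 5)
Your proposal is correct and follows essentially the same route as the paper: the factorization $\sum_{n}\tau_k^K(n^2)n^{-s}=\zeta_K(s)^m U(s)$ with $U$ absolutely convergent for $\Re(s)>\tfrac12$, then Perron's formula, a contour shift to $\Re(s)=\tfrac12+\epsilon$, the Heath-Brown bound combined with Phragm\'{e}n--Lindel\"{o}f, and the choice $T=x^{3/(md+6)}$. The only difference is that you establish the key local input $\tau_k^K(p^2)=m\,a_K(p)$ (and the convergence of $U$) by an explicit computation of the Euler factors from the splitting types, whereas the paper imports the former from L\"{u}--Yang (its Lemma \ref{divisor-ideal}) and obtains the latter from the general comparison Lemma \ref{Dirichlet-series-comparison}; your computation checks out.
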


\vspace{2mm}

\noindent
We now consider the problem related to the order of magnitude of a function. In general, to determine the precise order of magnitude of an arithmetic function is very hard. But it is very useful to find the following limiting behaviour.
 \begin{align*}
 \limsup_{x\rightarrow \infty}\frac{f(x)}{g(x)}=+\infty \quad \text{ and } \quad  \liminf_{x\rightarrow \infty}\frac{f(x)}{g(x)}=-\infty
 \end{align*}
 for some  function $f(x)$ and a positive function $g(x).$
 
 \noindent
 For example, if $r_2(n)$ stands for the number of representations of $n$ as a sum of two squares, then it is known that 

\begin{equation*}
R(x) = \displaystyle\sum_{n \leq x} r_2(n) = \pi x + E(x).
\end{equation*}

\vspace{2mm}
\noindent
Hardy \cite{hardy} and Ingham \cite{ingham} proved that $$\limsup\limits_{x\to \infty} \frac{E(x)}{x^{\frac{1}{4}}} = +\infty \qquad \mbox{ and } \qquad \liminf\limits_{x\to \infty} \frac{E(x)}{x^{\frac{1}{4}}} = -\infty.$$   

\vspace{2mm}
\noindent
In \cite{kc-rn-annals}, K. Chandrasekharan and R. Narasimhan extended the above formulae for a large class of arithmetical functions. We now briefly describe their result proved in \cite{kc-rn-annals}.

\vspace{2mm}
\noindent
Suppose we have the following functional equation 
\begin{equation} \label{omega-type-eq-2}
\Delta (s) \phi (s) = \Delta (\delta - s) \psi (\delta - s),
\end{equation}

\vspace{2mm}
\noindent
where $\delta$ is a real number, $\Delta (s) = \displaystyle\prod_{\nu=1}^{N}\Gamma(\alpha _{\nu}s + \beta _{\nu})$, $\alpha_{\nu} > 0$ satisfying $M=\displaystyle\sum_{\nu = 1}^{N}\alpha _ {\nu} \geq 1$ and $\beta _{\nu}$'s are complex numbers. We consider solutions of \eqref{omega-type-eq-2} in the form of Dirichlet series. 

\medskip
\noindent
Let $\phi (s) = \displaystyle\sum_{n=1}^{\infty} \frac{a_n}{{\lambda^s _n}} \mbox{ for } Re(s) > \alpha$  and $\psi (s) = \displaystyle\sum_{n=1}^{\infty} \frac{b_n}{{\mu^s_n}} \mbox{ for } Re(s) > \beta$, where $\alpha \mbox{ and } \beta$ are complex numbers. Also, let $A(x) = \displaystyle\sum_{\lambda_n \leq x}a_n$, $B(x) = \displaystyle\sum_{\mu _n \leq x}b_n$ and $Q(x) = \frac{1}{2\pi i}\int_{C}\phi (s)\frac{x^s}{s} ds,$ where $C$ is a suitably chosen contour containing all the singularities of the integrand. The following is a special case of a theorem proved in \cite{kc-rn-annals}.

\begin{theorem} \cite{kc-rn-annals}\label{omega-type-theorem-kc-rn}
Let $\Delta, \phi , \psi, M$ be defined as above such that \eqref{omega-type-eq-2} holds. Let $\{\mu _n \}$ contain a subsequence $\{\mu _{n_k}\}$ such that $\mu _n ^{\frac{1}{2M}}$ is representable as a linear combination of the numbers $\{\mu _{n_k}^{\frac{1}{2M}}\}$ with coefficients $\pm 1$, unless $\mu _n ^{\frac{1}{2M}}=\mu _{n_r} ^{\frac{1}{2M}}$ for some $r$, in which case $\mu _n ^{\frac{1}{2M}}$ has no other representations. Suppose 

\begin{equation}\label{eq-3}
\displaystyle\sum_{n=1}^{\infty} \frac{|Re(b_{n_k})|}{\mu _{n_k}^{\frac{M \delta + \frac{1}{2}}{2M}}}  = +\infty \mbox { holds }.
\end{equation}
 
\noindent
Then we have 

\begin{equation}\label{desired-1}
\limsup\limits_{x\to \infty} \frac{Re(A(x) - Q(x))}{x^{\theta}} = +\infty
\end{equation}

\noindent
and 

\begin{equation}\label{desired-2}
\liminf\limits_{x\to \infty} \frac{Re(A(x) - Q(x))}{x^{\theta}} = -\infty,
\end{equation}
where $\theta = \frac{\delta}{2} - \frac{1}{4M}$.
\end{theorem}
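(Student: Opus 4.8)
The plan is to run the method of Hardy \cite{hardy} and Ingham \cite{ingham} for the circle problem in the axiomatic form due to Chandrasekharan and Narasimhan \cite{kc-rn-annals}: extract from the functional equation \eqref{omega-type-eq-2} a Voronoi-type expansion of $A(x)-Q(x)$ into an oscillating series governed by the $\mu_n$, replace the kernel by its large-argument (generalized Bessel) asymptotics, and then produce — by a Dirichlet-type simultaneous-approximation argument applied to a finite block of the $\mu_{n_k}$ — arbitrarily large $x$ at which the oscillating terms reinforce each other so strongly that $\operatorname{Re}(A(x)-Q(x))$ overtakes any prescribed multiple of $x^{\theta}$. The hypothesis \eqref{eq-3} and the hypothesis that each $\mu_n^{1/(2M)}$ is a $\pm1$-combination of the $\mu_{n_k}^{1/(2M)}$ are exactly what make this reinforcement possible; \eqref{desired-1} and \eqref{desired-2} correspond to the two choices of sign in the reinforcement.

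\emph{Step 1: the Voronoi expansion.} Write $A(x)$ by Perron's formula as a contour integral of $\phi(s)x^{s}/s$, and smooth it to a Riesz mean $A_\rho(x)=\frac1{\Gamma(\rho+1)}\sum_{\lambda_n\le x}(x-\lambda_n)^{\rho}a_n$ for an integer $\rho$ large in terms of $M$, so that the integral converges absolutely. Moving the line of integration to the left past the singularities produces $Q_\rho(x)$, and then \eqref{omega-type-eq-2} lets one replace $\phi$ by $\psi$ on the far line, expand $\psi$ into its Dirichlet series, and interchange summation and integration (legitimate since $M\ge1$), giving
\[
A_\rho(x)-Q_\rho(x)=x^{\rho}\sum_{n=1}^{\infty} b_n\,\mu_n^{-\delta}\,G_\rho(\mu_n x),\qquad G_\rho(y)=\frac1{2\pi i}\int_{(c)}\frac{\Delta(\delta-s)}{\Delta(s)}\,\frac{\Gamma(s+1)}{\Gamma(s+\rho+1)}\,y^{s}\,ds .
\]
A Stirling/Barnes-integral analysis shows that $G_\rho(y)$ oscillates with frequency proportional to $y^{1/(2M)}$ and amplitude of order $y^{\sigma_\rho}$, with $\sigma_\rho\to\theta$ as $\rho\to0^{+}$; letting $\rho\to0$ inside the averaged estimates of Step 2 therefore produces for $A(x)-Q(x)$ a main part of the shape $x^{\theta}\sum_n b_n\,\mu_n^{-(\delta-\theta)}\cos\!\big(\kappa(\mu_n x)^{1/(2M)}+\vartheta\big)$, for an explicit constant $\kappa>0$ and phase $\vartheta$, plus a slowly varying remainder. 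The key bookkeeping identity is $\delta-\theta=\tfrac{\delta}{2}+\tfrac1{4M}=\tfrac{M\delta+1/2}{2M}$, which is precisely the exponent occurring in \eqref{eq-3}.

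\emph{Step 2: the reinforcement argument.} Suppose, for contradiction, that $L:=\limsup_{x\to\infty}\operatorname{Re}(A(x)-Q(x))\,x^{-\theta}<+\infty$, so $\operatorname{Re}(A(x)-Q(x))\le(L+1)x^{\theta}$ for $x\ge x_0$. Fix $V>0$; by \eqref{eq-3} choose a finite set $\mathcal{F}$ of indices $n_k$ with $\sum_{n_k\in\mathcal{F}}|\operatorname{Re}(b_{n_k})|\,\mu_{n_k}^{-(M\delta+1/2)/(2M)}>V$. By a Dirichlet-type simultaneous-approximation argument applied to the finitely many numbers $\kappa\,\mu_{n_k}^{1/(2M)}$ ($n_k\in\mathcal{F}$) one finds arbitrarily large $u>0$ for which the quantities $\kappa\,\mu_{n_k}^{1/(2M)}u+\vartheta$ lie within a small $\varepsilon$ of phases at which the cosines align favourably with $\operatorname{Re}(b_{n_k})$; set $x=u^{2M}$. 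Since every $\mu_n^{1/(2M)}$ is a $\pm1$-combination of the $\mu_{n_k}^{1/(2M)}$, the near-alignment propagates to all indices, so that the block $\mathcal{F}$ contributes at least $c\,x^{\theta}\sum_{n_k\in\mathcal{F}}|\operatorname{Re}(b_{n_k})|\,\mu_{n_k}^{-(M\delta+1/2)/(2M)}\ge c\,Vx^{\theta}$, while the contribution of the indices $n\notin\mathcal{F}$ — controlled using the smoothing of Step 1 and a dyadic splitting — is $o(x^{\theta})$ along the chosen sequence. Hence $\operatorname{Re}(A(x)-Q(x))>\tfrac c2Vx^{\theta}$ along this sequence; as $V$ was arbitrary this contradicts $\operatorname{Re}(A(x)-Q(x))\le(L+1)x^{\theta}$, establishing \eqref{desired-1}. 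Making the opposite choice of phases in the approximation step makes the block $\mathcal{F}$ contribute $\le-\tfrac c2Vx^{\theta}$ and yields \eqref{desired-2}.

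\emph{Main obstacle.} Three points require real work: the descent from the smoothed sum $A_\rho$ back to $A$ in Step 1; the proof that the indices $n\notin\mathcal{F}$ contribute $o(x^{\theta})$ in Step 2 — in particular the ``difference'' combinations $\mu_n^{1/(2M)}=\mu_{n_k}^{1/(2M)}-\mu_{n_l}^{1/(2M)}$, which can be small even when their constituents are large; and the simultaneous-approximation step, where one must actually realise the favourable phase alignment (an inhomogeneous Kronecker-type statement). All three are where the $\pm1$-combination hypothesis on $\{\mu_n^{1/(2M)}\}$, and the exact value $\theta=\tfrac{\delta}{2}-\tfrac1{4M}$, are used at full strength. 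The remaining ingredients — Perron's formula, the contour shift, and the Barnes-integral asymptotics of $G_\rho$ — are standard, though lengthy.
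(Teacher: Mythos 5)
First, a point of comparison: the paper does not prove this statement at all --- it is quoted from Chandrasekharan and Narasimhan \cite{kc-rn-annals} as a special case of their Omega-theorem and is used later only as a black box (in Corollary \ref{corollary 1}). So there is no in-paper proof to measure your proposal against; what follows assesses your outline on its own terms.

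Your plan is the right circle of ideas (the Hardy--Ingham method \cite{hardy, ingham} in the axiomatic form of \cite{kc-rn-annals}), and the exponent bookkeeping $\delta-\theta=(M\delta+\tfrac12)/(2M)$ is correct, but as written this is an outline rather than a proof, and the three items you defer to the ``main obstacle'' paragraph are exactly where the theorem lives; at least two of them would fail in the form you propose. (i) In Step 2 you assert that the indices $n\notin\mathcal{F}$ contribute $o(x^{\theta})$ along the chosen sequence. This cannot be established pointwise: the Voronoi series $\sum_n b_n\,\mu_n^{-(M\delta+1/2)/(2M)}\cos\big(\kappa(\mu_n x)^{1/(2M)}+\vartheta\big)$ is in general not absolutely convergent --- hypothesis \eqref{eq-3} asserts divergence of the absolute series already over the subsequence --- so its tail is not small for individual $x$, and a dyadic splitting does not repair this. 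The classical argument avoids the issue entirely: one assumes a one-sided bound $Re(A(x)-Q(x))\le Cx^{\theta}$, feeds it into an integral average (a Laplace/Borel-type transform, or integration against a Riesz-product kernel built on the frequencies $\mu_{n_k}^{1/(2M)}$), and compares with the singularity and Fourier-coefficient structure forced by \eqref{omega-type-eq-2}; no pointwise control of the tail is ever required. (ii) Your alignment step invokes an inhomogeneous Kronecker/Dirichlet approximation to place all phases $\kappa\mu_{n_k}^{1/(2M)}u+\vartheta$ near prescribed targets, but that requires the numbers $\mu_{n_k}^{1/(2M)}$ to be linearly independent over $\mathbb{Q}$, which is not among the hypotheses; the stated $\pm1$-representation property (with uniqueness in the degenerate case) is used in \cite{kc-rn-annals} not to justify such approximation but to control which cross-terms survive the averaging. (iii) The descent from $A_\rho$ to $A$ is where the exponent $\theta=\tfrac{\delta}{2}-\tfrac{1}{4M}$ is actually earned: for $\rho>0$ the natural Omega-exponent for the Riesz mean is strictly larger, so ``letting $\rho\to0$ inside the averaged estimates'' is precisely the step that must be made rigorous, not a limit one can take for free. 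In short, the proposal names the correct method but replaces its essential mechanism --- the averaging that converts \eqref{eq-3} into \eqref{desired-1} and \eqref{desired-2} --- with a pointwise reinforcement argument that does not go through.
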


\medskip
\noindent
If we replace $Re(b_{n_k})$ by $Im(b_{n_k})$, then we obtain analogous statements for $Im(A(x) - Q(x))$ in place of $Re(A(x) - Q(x))$.

\bigskip

\section{Statements of Theorems}

\noindent
In this section we state our results. First, we generalize Theorem \ref{l3} to finitely many number fields and our precise statement is as follows.

\begin{theorem}\label{main-th}
Let $l \geq 1$ and $k_1, \ldots , k_l \geq 2$ be integers. Let $K_1, \ldots , K_{l-1} \mbox{ and } K_l$ be finite Galois extensions of $\mathbb{Q}$ with discriminants $D_1, \ldots , D_{l-1} \mbox{ and } D_{l}$, respectively. Suppose that $[K_i : \mathbb{Q}]=d_i \geq 3$ is odd for all $i=1,\ldots , l$ and $\gcd(D_i, D_j)=1$ for $i \neq j$. Then for any $\epsilon > 0$, we have 
\begin{equation}\label{main-eq}
\displaystyle\sum_{n \leq x}\tau^{K_1}_{k_1}(n^2)\ldots \tau^{K_l}_{k_l}(n^2)=xP_{m}(\log x)+O(x^{1-\frac{3}{md_1\ldots d_l+6}+\epsilon}), 
\end{equation}
where $m=\displaystyle\prod_{i=1}^{l}\frac{k_{i}^2d_i + k_i}{2}$ and $P_m$ stands for a polynomial of degree $m-1$.
\end{theorem}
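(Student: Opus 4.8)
The plan is to pin down the Dirichlet series of the multiplicative function $a(n):=\prod_{i=1}^{l}\tau^{K_i}_{k_i}(n^{2})$ precisely enough to feed it into a Perron-type argument. Write $F(s)=\sum_{n\ge 1}a(n)n^{-s}=\prod_{p}F_{p}(s)$ for the corresponding Euler product (it converges for $\operatorname{Re}(s)>1$ by the divisor bound). The first step is to evaluate the local factors. Outside the finitely many primes dividing some $D_{i}$ — and by the coprimality hypothesis no prime divides two of the $D_{i}$ — the prime $p$ is unramified in every $K_{i}$; write $f_{i,p}g_{i,p}=d_{i}$ for its decomposition type in $K_{i}$. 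Since $d_{i}$ is odd, $f_{i,p}$ is odd, so $f_{i,p}\mid 2j\iff f_{i,p}\mid j$, and a direct computation with ideals of $\mathcal{O}_{K_i}$ gives $\tau^{K_i}_{k_i}(p^{2j})=\binom{g_{i,p}k_i+2j/f_{i,p}-1}{2j/f_{i,p}}$ when $f_{i,p}\mid j$, and $\tau^{K_i}_{k_i}(p^{2j})=0$ otherwise. Consequently each $F_{p}(s)$ is a fixed rational function of $p^{-s}$ depending only on the splitting type of $p$ in the $K_{i}$, and its coefficient of $p^{-s}$, namely $a(p)=\prod_{i}\tau^{K_i}_{k_i}(p^{2})$, equals the constant $\prod_{i}\binom{d_{i}k_{i}+1}{2}$ when $p$ splits completely in the compositum $L:=K_{1}\cdots K_{l}$, and is $0$ otherwise (the oddness of the $d_i$ is exactly what kills $a(p)$ for primes with some $f_{i,p}>1$).

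The heart of the argument is the factorisation
\[
F(s)=\zeta_{L}(s)^{m}\,G(s),
\]
where $G(s)=\sum_{n}g(n)n^{-s}$ is multiplicative, converges absolutely in the half-plane $\operatorname{Re}(s)>\tfrac12$, and has $G(1)\neq 0$. Two inputs give this. First, since the $K_{i}$ are Galois with pairwise coprime discriminants they are pairwise linearly disjoint over $\mathbb{Q}$ — a common subfield of $K_{i}$ and of the compositum of the other $K_{j}$ would be Galois and unramified at every prime, hence $\mathbb{Q}$ by Minkowski's bound — so $[L:\mathbb{Q}]=d_{1}\cdots d_{l}$ and $\operatorname{Gal}(L/\mathbb{Q})\cong\prod_{i}\operatorname{Gal}(K_{i}/\mathbb{Q})$. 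Second, for all large $p$ one has $\log F_{p}(s)=a(p)p^{-s}+O(p^{-2\operatorname{Re}(s)})$ with implied constant uniform in $p$, so $\log F(s)=\sum_{p}a(p)p^{-s}+(\text{analytic for }\operatorname{Re}(s)>\tfrac12)$; comparing this with the standard identity $\log\zeta_{L}(s)=[L:\mathbb{Q}]\sum_{p\text{ split completely in }L}p^{-s}+(\text{analytic for }\operatorname{Re}(s)>\tfrac12)$ and using that $a(p)$ is supported on primes splitting completely in $L$, where it equals $\prod_{i}\binom{d_{i}k_{i}+1}{2}=[L:\mathbb{Q}]\cdot m$, yields $\log F(s)=m\log\zeta_{L}(s)+(\text{analytic for }\operatorname{Re}(s)>\tfrac12)$, i.e. the claimed factorisation with $G:=F\cdot\zeta_{L}^{-m}$. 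A check with the local factors shows $G_{p}(s)=F_{p}(s)(1-p^{-f_{p}s})^{g_{p}m}=1+O(p^{-2\operatorname{Re}(s)})$, which gives the absolute convergence of $G$ for $\operatorname{Re}(s)>\tfrac12$ and shows that the only singularity of $F$ there is the pole of exact order $m$ at $s=1$ coming from $\zeta_{L}(s)^{m}$ (recall $\zeta_{L}$ has a simple pole at $s=1$); the finitely many ramified primes — and, if needed, a few small primes — are holomorphic Euler factors for $\operatorname{Re}(s)>\tfrac12$ and are absorbed into $G$.

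Granted the factorisation, the conclusion follows by the analytic argument of L\"u (proof of Theorem~\ref{l1}) and L\"u--Yang (proof of Theorem~\ref{l3}), in the form used by Shi \cite{shi}. A truncated Perron formula gives $\sum_{n\le x}a(n)=\frac{1}{2\pi i}\int_{c-iT}^{c+iT}\zeta_{L}(s)^{m}G(s)\,\frac{x^{s}}{s}\,ds+O(x^{1+\epsilon}/T)$ with $c=1+1/\log x$; shifting the contour past $s=1$ picks up the residue at the pole of order $m$, which (as $G$ is holomorphic and nonzero there) is $xP_{m}(\log x)$ with $\deg P_{m}=m-1$, and the shifted integral is estimated with the Phragm\'en--Lindel\"of convexity bound $\zeta_{L}(\sigma+it)\ll(|t|+2)^{\frac{[L:\mathbb{Q}]}{2}(1-\sigma)+\epsilon}$ on $\tfrac12\le\sigma\le1$ together with the boundedness of $G$ on $\operatorname{Re}(s)\ge\tfrac12+\delta$. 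Optimising the abscissa of the shifted contour and the height $T$ in terms of $x$ exactly as in L\"u's argument produces the error $O(x^{1-\frac{3}{md_{1}\cdots d_{l}+6}+\epsilon})$; the exponent is that of Theorem~\ref{l3} with the single-field degree $d$ replaced by $[L:\mathbb{Q}]=d_{1}\cdots d_{l}$, i.e. $md_{1}\cdots d_{l}$ is the degree of the leading factor $\zeta_{L}(s)^{m}$.

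The step I expect to be the main obstacle is the factorisation: the $F_{p}(s)$ are, in effect, coefficientwise (Hadamard) products of the even parts of the Euler factors $(1-p^{-f_{i,p}s})^{-g_{i,p}k_{i}}$ — complicated rational functions of $p^{-s}$ — and one must verify that after removing $\zeta_{L}(s)^{m}$ the remainder $G$ still converges for $\operatorname{Re}(s)>\tfrac12$ and that the exponent is \emph{exactly} $m=\prod_{i}\tfrac{k_{i}^{2}d_{i}+k_{i}}{2}$. Both hypotheses enter here essentially: the oddness of the $d_{i}$ makes $a(p)$ — the only coefficient of $F_{p}(s)$ that could obstruct convergence for $\operatorname{Re}(s)>\tfrac12$ — supported on primes that split completely in $L$, so that a single Dedekind zeta function suffices; and the coprimality of the discriminants forces $[L:\mathbb{Q}]=d_{1}\cdots d_{l}$, which is precisely what turns $\bigl(\prod_{i}\binom{d_{i}k_{i}+1}{2}\bigr)/[L:\mathbb{Q}]$ into $\prod_{i}\tfrac{k_{i}^{2}d_{i}+k_{i}}{2}$. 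In effect the argument is the several-field analogue of the engine behind Theorem~\ref{l3} (which is the case $l=1$), carried out in parallel with L\"u--Ma's passage from one number field to several \cite{lu-ma} in the unsquared setting.
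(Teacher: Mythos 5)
Your overall strategy is the same as the paper's: factor the generating Dirichlet series as $\zeta_{K_1\cdots K_l}(s)^m\cdot G(s)$ with $G(s)$ absolutely convergent for $\operatorname{Re}(s)>\tfrac12$ and $G(1)\neq 0$, then apply a truncated Perron formula and shift the contour to $\operatorname{Re}(s)=\tfrac12+\epsilon$, picking up $xP_m(\log x)$ from the order-$m$ pole at $s=1$. Your local analysis is correct and in fact more self-contained than the paper's: where the paper quotes the identity $\tau_{k}^{K}(p^2)=\frac{k^2d+k}{2}a_K(p)$ (Lemma \ref{divisor-ideal}), the compositum identity $a_{K_1\cdots K_l}(p)=\prod_i a_{K_i}(p)$ (Lemma \ref{compositum-lemma}), and a general comparison lemma for Dirichlet series (Lemma \ref{Dirichlet-series-comparison}), you recompute everything from the splitting data. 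Your identification of the pole order $m$, of the role of the oddness of the $d_i$ (which kills $a(p)$ off the completely split primes), and of the coprimality of the discriminants (which forces $[L:\mathbb{Q}]=d_1\cdots d_l$) all agree with the paper.

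There is, however, one concrete defect: the estimate you invoke on the shifted contour, the convexity bound $\zeta_L(\sigma+it)\ll(|t|+2)^{\frac{[L:\mathbb{Q}]}{2}(1-\sigma)+\epsilon}$, is not strong enough to produce the stated exponent. Writing $D=d_1\cdots d_l$, it gives $|\zeta_L(\tfrac12+\epsilon+it)|^m\ll t^{mD/4+\epsilon}$, hence a total error $O\big(x^{1+\epsilon}T^{-1}\big)+O\big(x^{1/2+\epsilon}T^{mD/4+\epsilon}\big)$, which optimizes at $T=x^{2/(mD+4)}$ and yields only $O\big(x^{1-\frac{2}{mD+4}+\epsilon}\big)$; since $\frac{2}{mD+4}<\frac{3}{mD+6}$, this is strictly weaker than \eqref{main-eq}. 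The exponent $\frac{3}{mD+6}$ requires Heath-Brown's subconvexity estimate $\zeta_L(\tfrac12+it)\ll t^{D/6+\epsilon}$ (Lemma \ref{heath-brown}), which after Phragm\'en--Lindel\"of interpolation against boundedness on $\operatorname{Re}(s)=1+\epsilon$ gives $\zeta_L(\sigma+it)\ll(1+|t|)^{\frac{D}{3}(1-\sigma)+\epsilon}$; with this replacement your optimization gives $T=x^{3/(mD+6)}$ and the claimed error term. Everything else in your argument is sound.
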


\vspace{2mm}

\noindent
We note that in Theorem \ref{main-th}, we are required to have $d_i$ odd for each $i$. But if we restrict ourselves to two number fields, one among them being quadratic, then we can have a similar result for the mean values of the ideal counting function $a_{K}(n)$. In \cite{yang}, Z. Yang found an asymptotic formula for the product of the ideal counting functions of two quadratic fields taken together (see also \cite{lu2}). In the next theorem of ours, we consider the same problem for a quadratic field and an odd degree number field. More precisely, the statement is as follows.

\begin{theorem}\label{quadratic-cubic}
Let $K_1$ and $K_2$ be two number fields with discriminants $D_{K_1}$ and $D_{K_2}$, respectively such that $[K_1 : \mathbb{Q}]=2$ and $[K_2 : \mathbb{Q}]=d$ with $d \geq 3$ odd. Suppose that $\gcd(D_{K_1}, D_{K_2})=1$. Then for any integer $l \geq 1$, we have $$\displaystyle\sum_{n \leq x} a_{K_1}(n)^{l}a_{K_2}(n)^{l} = xP_{m}(\log x) + O\left(x^{1-\frac{3}{(2d)^l + 6}+\epsilon}\right).$$
\end{theorem}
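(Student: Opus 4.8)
The plan is to realise the Dirichlet series $\sum_{n\ge1}a_{K_1}(n)^la_{K_2}(n)^ln^{-s}$ as a power of the Dedekind zeta function of the compositum $L:=K_1K_2$ times a correction factor that converges absolutely to the right of $\Re s=\tfrac12$, and then to extract the asymptotics by a Perron--formula/contour--shift argument in the style of Shi \cite{shi}. Write $F(s)=\sum_{n\ge1}a_{K_1}(n)^la_{K_2}(n)^ln^{-s}$; since $a_{K_1}^la_{K_2}^l$ is multiplicative and $a_{K_i}(n)\ll_\epsilon n^\epsilon$, this converges absolutely for $\Re s>1$ and factors as an Euler product. Because $\gcd(D_{K_1},D_{K_2})=1$, the field $K_1$ is linearly disjoint from the Galois closure of $K_2$, so $[L:\mathbb{Q}]=2d$, a rational prime ramifies in $L$ exactly when it divides $D_{K_1}D_{K_2}$, and for $p$ unramified in $L$ the decomposition of $p$ in $L$ is the ``product'' of its decompositions in $K_1$ and $K_2$; in particular $a_L(p)=a_{K_1}(p)a_{K_2}(p)$. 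Combining this with the elementary identities $a_{K_1}(p)^l=2^{l-1}a_{K_1}(p)$ and $a_{K_2}(p)^l=d^{l-1}a_{K_2}(p)$ for unramified $p$ (a feature of how primes split in a quadratic field and in $K_2$; here the hypotheses $[K_1:\mathbb{Q}]=2$ and $d$ odd enter, exactly as in Theorem~\ref{l3}), one obtains for unramified $p$ the local identity $a_{K_1}(p)^la_{K_2}(p)^l=m\,a_L(p)$, where $m:=(2d)^{l-1}$ is the quantity appearing in the statement.

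Consequently the $p$-th Euler factor of $F$ and that of $\zeta_L(s)^m$ agree up to a factor $1+O(p^{-2\Re s})$, so $F(s)=\zeta_L(s)^m\,U(s)$, where $U(s)=\sum_n u(n)n^{-s}$ is given by an Euler product that converges absolutely, hence is holomorphic and bounded, for $\Re s>\tfrac12$ (the finitely many ramified Euler factors are harmless, being rational functions of $p^{-s}$ regular for $\Re s>0$). I would then invoke the standard analytic facts about $\zeta_L$: meromorphic continuation with a single, simple pole at $s=1$; a functional equation $s\mapsto 1-s$ with $2d$ gamma factors; and the convexity bound $\zeta_L(\sigma+it)\ll_\epsilon(1+|t|)^{d(1-\sigma)+\epsilon}$ on $\tfrac12\le\sigma\le1$. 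Thus $F(s)$ is holomorphic for $\Re s>\tfrac12$ except for a pole of order $m$ at $s=1$, and there $F(\sigma+it)\ll_\epsilon(1+|t|)^{\frac{(2d)^l}{2}(1-\sigma)+\epsilon}$.

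Finally, apply the truncated Perron formula with $c=1+1/\log x$ to $\sum_{n\le x}a_{K_1}(n)^la_{K_2}(n)^l$ and shift the line of integration from $\Re s=c$ to $\Re s=\sigma_0$ for a suitable $\sigma_0\in(\tfrac12,1)$. The pole of $F(s)x^s/s$ at $s=1$ has order $m$ and contributes the main term $xP_{m-1}(\log x)$ with $P_{m-1}$ of degree $m-1$ (the polynomial written $P_m$ in the statement); the horizontal segments and the shifted vertical line are bounded using the growth estimate above, and, after optimising $\sigma_0$ and the truncation height $T$ and inserting the usual mean-value input for Dedekind zeta functions near the central line (as in \cite{shi,lu3}), one is left with the error $O(x^{1-\frac{3}{(2d)^l+6}+\epsilon})$.

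The hard part is the structural step, i.e.\ establishing $F(s)=\zeta_L(s)^mU(s)$ with $U$ holomorphic and bounded for $\Re s>\tfrac12$. This rests entirely on the local identity $a_{K_1}(p)^la_{K_2}(p)^l=m\,a_L(p)$ at unramified $p$ --- which combines the linear disjointness coming from the coprimality of the discriminants with the precise splitting behaviour of primes in a quadratic field and in an odd-degree field --- and on checking that every remaining Euler factor, the ramified ones included, contributes only $1+O(p^{-2\Re s})$ uniformly in $p$. Once this is in place, the remaining steps are the by-now-standard analytic machinery of the Chandrasekharan--Narasimhan/Shi type.
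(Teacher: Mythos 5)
Your structural step is exactly the paper's: using $\gcd(D_{K_1},D_{K_2})=1$ to get $a_{K_1K_2}(p)=a_{K_1}(p)a_{K_2}(p)$ (Lemma \ref{compositum-lemma}) and the identity $a_K(p)^l=(\deg K)^{l-1}a_K(p)$ at all but finitely many primes (Lemma \ref{lu-acta-math-hungarica}), so that the Euler factors of $F(s)$ and of $\zeta_{K_1K_2}(s)^{(2d)^{l-1}}$ agree in the $p^{-s}$ coefficient and Lemma \ref{Dirichlet-series-comparison} yields $F(s)=\zeta_{K_1K_2}(s)^{(2d)^{l-1}}U(s)$ with $U$ absolutely convergent for $\mathrm{Re}(s)>\frac12$. (The paper applies the power identity once to the compositum, you apply it to $K_1$ and $K_2$ separately; this is an immaterial difference. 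Also, the oddness of $d$ plays no role here --- Lemma \ref{lu-acta-math-hungarica} has no parity restriction --- and, as in the paper, one does need $K_2$ Galois for that lemma, a hypothesis the theorem statement omits.) The Perron/contour-shift framework is likewise the same as in the proof of Theorem \ref{main-th}.

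There is, however, one concrete gap: the pointwise bound you feed into the contour estimate is too weak to produce the stated exponent. You invoke the convexity bound $\zeta_L(\sigma+it)\ll_\epsilon(1+|t|)^{d(1-\sigma)+\epsilon}$ for $L=K_1K_2$ of degree $2d$, i.e.\ exponent $\frac{2d}{2}(1-\sigma)$. With $m=(2d)^{l-1}$ this gives $F(\tfrac12+\epsilon+it)\ll(1+|t|)^{(2d)^l/4+\epsilon}$, and balancing $x^{1+\epsilon}T^{-1}$ against $x^{1/2+\epsilon}T^{(2d)^l/4}$ forces $T=x^{2/((2d)^l+4)}$ and an error term $O\bigl(x^{1-\frac{2}{(2d)^l+4}+\epsilon}\bigr)$, which is strictly weaker than the claimed $O\bigl(x^{1-\frac{3}{(2d)^l+6}+\epsilon}\bigr)$. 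The paper instead uses Heath-Brown's subconvexity estimate $\zeta_L(\tfrac12+it)\ll t^{[L:\mathbb{Q}]/6+\epsilon}$ (Lemma \ref{heath-brown}, from \cite{heath}), which after Phragm\'en--Lindel\"of (Lemma \ref{phragman-lindeloff}) gives $\zeta_L(\sigma+it)\ll(1+|t|)^{\frac{2d}{3}(1-\sigma)+\epsilon}$; this is precisely where the constants $3$ and $6$ in the exponent come from. Your appeal to ``the usual mean-value input near the central line'' does not substitute for this: no mean-square estimate is used in the first-moment argument, only this pointwise subconvexity bound. Replacing your convexity bound by Lemma \ref{heath-brown} closes the gap and the rest of your argument goes through.
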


\vspace{2mm}

\noindent
We now turn our attention towards the error term arising in Theorem \ref{main-th}. Keeping the same notations, if we define $\Delta (x):=\displaystyle\sum_{n \leq x}\tau^K_{k_1}(n^2)\ldots \tau^K_{k_l}(n^2)-xP_{m}(\log x)$, then we get $$\Delta (x) = O(x^{1-\frac{3}{md_1\ldots d_l+6}+\epsilon}).$$

\vspace{2mm}

\noindent
One of the useful ways to understand $\Delta (x)$ is via the higher order moments. Recently, in \cite{shi}, S. Shi obtained the second order moment of the error term arising in the estimation of $\displaystyle\sum_{n \leq x}\tau^K_{k_1}(n)\ldots \tau^K_{k_l}(n)$. Here, adapting the methods and techniques used in \cite{shi}, we calculate the second order moment for the error term $\Delta (x)$ appearing in Theorem \ref{main-th}. More precisely, we prove the following.


\begin{theorem}\label{main-th-error}
Under the same notations and hypotheses as in Theorem \ref{main-th}, let $$\Delta (x):=\displaystyle\sum_{n \leq x}\tau^{K_1}_{k_1}(n^2)\ldots \tau^{K_l}_{k_l}(n^2)-xP_{m}(\log x).$$ Then for a given $\epsilon > 0$, we have 
\begin{equation}
\displaystyle\int_{1}^{X}\Delta^2 (x)dx = O(X^{3-\frac{6}{md_1\ldots d_l+3}+\epsilon}).
\end{equation}
Here, the implied constant depends only on $\epsilon$.
\end{theorem}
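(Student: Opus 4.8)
The plan is to follow the standard approach for mean-square estimates of error terms in divisor-type problems, which goes through a truncated Voronoï-type expansion of $\Delta(x)$ as a short Dirichlet series (a sum over $n$ of the coefficients times an oscillating factor like $n^{-3/4}e(\sqrt[?]{n x})$), followed by squaring and integrating term by term, with the diagonal terms producing the main contribution and the off-diagonal terms being controlled by a first-derivative (or stationary phase) estimate. Concretely, the generating Dirichlet series here is
\begin{equation*}
F(s) = \sum_{n=1}^{\infty} \frac{\tau^{K_1}_{k_1}(n^2)\cdots\tau^{K_l}_{k_l}(n^2)}{n^s},
\end{equation*}
which, by the multiplicativity established after \eqref{defn} and the Galois/coprime-discriminant hypotheses, factors locally and can be compared with $\zeta(s)^m$ times a Dirichlet series $G(s)$ that converges absolutely in a half-plane $\mathrm{Re}(s) > 1/2$ (this is exactly the structure exploited in Theorems \ref{l3} and \ref{main-th}, where $m = \prod_i \tfrac{k_i^2 d_i + k_i}{2}$). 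So $F(s)$ has a pole of order $m$ at $s=1$, accounting for $xP_m(\log x)$, and $\Delta(x)$ is the error in Perron's formula for $F$.

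First I would record the functional-equation input: $F(s)$ satisfies (up to the harmless factor $G(s)$) the same kind of functional equation as $\zeta(s)^m$ with a $\Gamma$-factor of the shape $\Delta(s) = \Gamma(\alpha s + \beta)^{?}$ having $M$-parameter equal to $D/2$ where $D = m\, d_1\cdots d_l$; more precisely, the relevant "dimension" governing the Voronoï expansion is $D = m d_1\cdots d_l$, so that the critical exponent is $\tfrac{D-1}{2D}$ and the mean-square exponent should be $3 - \tfrac{2}{D+?}$. Comparing with the claimed bound $X^{3 - 6/(D+3)+\epsilon}$ with $D = m d_1 \cdots d_l$, I expect the truncation level in the Voronoï formula to be chosen as a power $N = X^{?}$ and the diagonal term $\sum_{n \le N} \frac{a_n^2}{n^{3/2}} \cdot X^{?}$ to come out as $X^{3 - 6/(D+3)}$ after optimizing; here I would invoke a mean-value bound $\sum_{n \le N} a_n^2 \ll N (\log N)^{O(1)}$ where $a_n = \tau^{K_1}_{k_1}(n^2)\cdots$, which follows by comparing with a power of $\tau$ and using the standard bound $\sum_{n\le N}\tau(n)^{C} \ll N(\log N)^{O(1)}$.

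Next I would carry out the actual argument in three steps. (i) \textbf{Truncated Voronoï formula.} Using the functional equation and a contour-shift / Perron argument exactly as in \cite{shi} (adapted to the squared-argument function), derive
\begin{equation*}
\Delta(x) = \frac{x^{\theta'}}{\pi}\sum_{n \le N} \frac{a_n}{n^{1-\theta'+1/(2M)}}\,\cos\!\big(2\pi M (nx)^{1/(2M)} + \gamma\big) + (\text{error depending on }N,x),
\end{equation*}
with $\theta' = \tfrac{D-1}{2D}$, $M = D/2$, and an error term that is negligible after integration once $N$ is a suitable power of $X$; the subtlety is that $G(s)$ contributes a Dirichlet convolution, so $a_n$ is really $(\,\cdot\,) \ast g$ with $g$ supported by a nice series, and one keeps track of this convolution throughout. (ii) \textbf{Square and integrate.} Compute $\int_1^X \Delta(x)^2\,dx$ by expanding the square; the diagonal $n=n'$ gives $\asymp X^{1+2\theta'} \sum_{n\le N} a_n^2 n^{-2+2\theta'-1/M}$, and since $1+2\theta' = 3 - 1/D$ and $-2+2\theta' - 1/M = -1 - 2/D$, the sum over $n \le N$ of $a_n^2 n^{-1-2/D}$ is $O((\log N)^{O(1)})$ if $N$ is not too large, or one truncates at $N$ and balances against the off-diagonal and the Voronoï error. (iii) \textbf{Off-diagonal estimate.} For $n \ne n'$ bound $\int_1^X \cos(\cdots)\cos(\cdots)\,dx$ by $\ll X^{1-1/(2M)}\,|n^{1/(2M)} - n'^{1/(2M)}|^{-1}$ via integration by parts, then sum; this is where the coprimality and multiplicativity matter only through the crude bound $a_n \ll n^{\epsilon}$, and the double sum converges to something of lower order than the diagonal after choosing $N = X^{D/(D+3)}$ or similar.

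\textbf{The hard part} will be (a) setting up the truncated Voronoï formula correctly for $\tau^{K}_{k}(n^2)$ rather than $\tau^{K}_{k}(n)$ — one must identify the right $\Gamma$-factor and the right "$M$", and in particular verify that the presence of the extra analytic factor $G(s)$ (absolutely convergent for $\mathrm{Re}(s)>1/2$) does not spoil the contour shift and only changes $a_n$ by a benign convolution — and (b) the bookkeeping in balancing the truncation parameter $N$ so that the Voronoï tail error, the diagonal, and the off-diagonal all come out $\le X^{3 - 6/(D+3)+\epsilon}$ simultaneously; getting precisely the exponent $6/(D+3)$ (and not something slightly worse) requires choosing $N$ as the exact power of $X$ that equalizes the diagonal's growth in $N$ against the tail, which is the one genuinely delicate optimization. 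Everything else — multiplicativity, the mean-value bound for $\sum a_n^2$, the first-derivative test for the oscillatory integrals — is routine and parallels \cite{shi} line by line.
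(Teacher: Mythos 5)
Your proposal goes through a truncated Voronoï expansion of $\Delta(x)$, a diagonal/off-diagonal analysis, and a first-derivative test. This is not what the paper does, and as written it has a genuine gap. The paper's proof is far more elementary: it reuses the Perron decomposition from the proof of Theorem \ref{main-th}, writing $\Delta(x) = I_1 + I_2 + I_3 + O(x^{1+\epsilon}/T)$ with the $I_j$ the three contour integrals, squares each piece, and integrates in $x$ directly. For $I_1$ one expands the square, interchanges the $x$-integration with the two $t$-integrations (which produces the kernel $(1+|t_1-t_2|)^{-1}$ and the elementary bound $\int_{-T}^{T}(1+|t_1-t_2|)^{-1}dt_2 \ll \log 2T$), and then inserts the pointwise subconvexity bound $\zeta_{K_1\cdots K_l}(\tfrac12+\epsilon+it) \ll (1+|t|)^{d_1\cdots d_l/6+\epsilon}$ from Lemma \ref{heath-brown}; the $I_2, I_3$ and Perron error pieces are handled pointwise. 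The exponent $3-\frac{6}{md_1\cdots d_l+3}$ arises exactly from balancing $X^{3+\epsilon}T^{-2}$ against $X^{2+2\epsilon}T^{\frac{md_1\cdots d_l}{3}-1}$ at $T=X^{3/(md_1\cdots d_l+3)}$. No functional equation and no Voronoï summation are used anywhere.

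The gap in your route is the functional-equation input itself. Here $F(s)=\zeta_{K_1\cdots K_l}(s)^m\cdot U(s)$ where $U(s)$ is only known to converge absolutely for $\mathrm{Re}(s)>\tfrac12$; it has no continuation to the left of that line and no functional equation, so you cannot shift the contour past the critical line to produce the dual sum in a Voronoï formula (the paper's Remark 3 makes precisely this point). Even setting $U$ aside, a truncated Voronoï formula with a controllable error term for an $L$-function of degree $D=md_1\cdots d_l$ (which is at least $21$ here) is not a routine adaptation of the degree-$2$ case; mean-square asymptotics of Chandrasekharan--Narasimhan type are simply not available in this generality. Finally, your exponent bookkeeping does not close: with $\theta'=\frac{D-1}{2D}$ the diagonal contributes $X^{1+2\theta'}=X^{2-1/D}$, not $X^{3-1/D}$, and neither quantity equals the claimed $X^{3-6/(D+3)}$ --- that exponent is native to the Perron-plus-subconvexity argument, not to a Voronoï diagonal. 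You should instead follow \cite{shi}: square the contour-integral representation of $\Delta(x)$ and integrate, exactly as in the paper.
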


\noindent
Now, for a positive integer $a$, we define the {\it generalized sum of divisor function}  in a number field $K$ by the formula $$\sigma_{a}^{K}(n)= \displaystyle\sum_{N(\mathfrak{a}) \mid n}(N(\mathfrak{a}))^{a}.$$

\noindent
The precise statements of our results regarding the estimation of the summatory function are  given below.

\begin{theorem}\label{funccc}
Let $K$ be a number field of degree $d$. Then for any positive integer $a\geq 1$ and any $\epsilon>0$, we have
\begin{align*} \label{sum of divisor  func}
\displaystyle\sum_{n\leq x}\sigma_{a}^K (n)=\frac{c_K\zeta(1+a)}{1+a}x^{1+a}+O\left(x^{(1+a)-\frac{3}{d+6}+\epsilon}\right),
\end{align*}
where $c_K$ is the residue of $\zeta_K(s)$ at $s=1.$
\end{theorem}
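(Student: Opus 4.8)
The plan is to factor the generating Dirichlet series, reduce the sum to the summatory function of the ideal--counting function of $K$ by a hyperbola-type argument, and then invoke the appropriate estimate for the latter.

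First I would record the generating series. Writing $a_K(\ell)=\#\{\mathfrak a\subseteq\mathcal O_K:N(\mathfrak a)=\ell\}$ and grouping the ideals occurring in $\sigma_{a}^{K}(n)$ according to their norm gives $\sigma_{a}^{K}(n)=\sum_{\ell\mid n}a_K(\ell)\ell^{a}$; hence $\sigma_{a}^{K}$ is the Dirichlet convolution of $\ell\mapsto a_K(\ell)\ell^{a}$ with the constant function $1$, and a comparison of Euler products yields
\[
\sum_{n=1}^{\infty}\frac{\sigma_{a}^{K}(n)}{n^{s}}=\zeta(s)\,\zeta_K(s-a)\qquad(\operatorname{Re} s>a+1).
\]
The right-hand side has a simple pole at $s=a+1$ with residue $\zeta(a+1)c_K$, which will produce the main term $\frac{c_K\zeta(1+a)}{1+a}x^{1+a}$, and a simple pole at $s=1$ with residue $\zeta_K(1-a)$, which will only feed a term of size $O(x)$.

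Next I would run the hyperbola method on the convolution. Putting $g(\ell)=a_K(\ell)\ell^{a}$,
\[
\sum_{n\le x}\sigma_{a}^{K}(n)=\sum_{\ell m\le x}g(\ell)=\sum_{m\le x}G\!\left(\tfrac{x}{m}\right),\qquad G(y):=\sum_{\ell\le y}a_K(\ell)\ell^{a}.
\]
By partial summation from the estimate $A_K(y):=\sum_{\ell\le y}a_K(\ell)=c_Ky+O\big(y^{\,1-\frac{3}{d+6}+\epsilon}\big)$ for the ideal--counting function (which in the range $d\le 9$ already follows from Landau's bound $O(y^{\,1-2/(d+1)})$), one gets
\[
G(y)=\frac{c_K}{a+1}\,y^{a+1}+O\big(y^{\,a+1-\frac{3}{d+6}+\epsilon}\big),
\]
the lower-order constants being absorbed into the error. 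Summing over $m\le x$, completing $\sum_{m\le x}m^{-(a+1)}$ to $\zeta(a+1)$ (the tail contributing $x^{a+1}\cdot O(x^{-a})=O(x)$), and noting that $a\ge 1>\tfrac{3}{d+6}$ --- so that $\sum_{m\ge 1}m^{-(a+1-\frac{3}{d+6}+\epsilon)}$ converges and $(1+a)-\tfrac{3}{d+6}+\epsilon>1$ --- one arrives at
\[
\sum_{n\le x}\sigma_{a}^{K}(n)=\frac{c_K\zeta(1+a)}{1+a}\,x^{1+a}+O\big(x^{\,(1+a)-\frac{3}{d+6}+\epsilon}\big),
\]
all the $O(x)$ contributions being absorbed into the error.

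The one genuinely analytic ingredient, and the main obstacle, is the error term $O\big(y^{1-\frac{3}{d+6}+\epsilon}\big)$ for $A_K(y)$: obtaining it uniformly in the degree $d$ calls for the functional equation of $\zeta_K$ together with truncated Perron's formula and a mean-square estimate for $\zeta_K$ on vertical lines (equivalently, the bound $\sum_{\ell\le x}a_K(\ell)^{2}\ll x^{1+\epsilon}$, which holds since $a_K(\ell)\le\tau_{d+1}(\ell)$), exactly in the style of the works recalled in the introduction; the remaining manipulations are routine, and one has only to check that the rearrangement of the convolution and the partial summation introduce nothing larger than the stated error. An alternative is to apply truncated Perron's formula directly to $\zeta(s)\zeta_K(s-a)\,x^{s}/s$ and move the contour past $s=a+1$ and $s=1$; this also works, but it forces one to deal with an object of ``degree'' $d+1$ instead of the cleaner degree-$d$ object $\zeta_K$, which is why the reduction to $A_K(y)$ is preferable.
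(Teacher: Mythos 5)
Your proposal is correct, but it reaches the result by a genuinely different route from the paper. The paper applies the truncated Perron formula directly to $L_K(s)=\zeta(s)\zeta_K(s-a)$, shifts the contour to $\operatorname{Re}(s)=\tfrac12+a$ (picking up only the pole at $s=1+a$, with residue $\frac{c_K\zeta(1+a)}{1+a}x^{1+a}$), observes that $\zeta(s)$ is bounded on that line so that only $\zeta_K(s-a)=\zeta_K(\tfrac12+it)$ matters, and then reuses verbatim the Phragm\'en--Lindel\"of/Heath-Brown estimates and the choice $T=x^{3/(d+6)}$ from the proof of Theorem \ref{main-th}. You instead unfold the convolution $\sigma_a^K=\big(\ell\mapsto a_K(\ell)\ell^a\big)*1$, reduce by partial summation to the ideal-counting asymptotic $A_K(y)=c_Ky+O\big(y^{1-\frac{3}{d+6}+\epsilon}\big)$, and sum over the outer variable; your convergence checks (absorbing the $O(x)$ tail and the convergence of $\sum_m m^{-(a+1-\frac{3}{d+6}+\epsilon)}$ using $a\ge1$) are all sound. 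The two approaches are equivalent in strength: your ``main obstacle'' $A_K(y)$ is exactly the $m=1$, single-field instance of the paper's Perron--plus--subconvexity machinery (Lemmas \ref{phragman-lindeloff}, \ref{heath-brown}, \ref{perron}), so nothing beyond what the paper already assembles is needed; your observation that Landau's exponent $2/(d+1)$ already suffices for $d\le 9$ is a nice bonus the paper does not make. What your decomposition buys is conceptual clarity --- the problem visibly has ``degree $d$'' rather than $d+1$, which the paper achieves only implicitly through the remark that $\zeta(s)$ is bounded on the shifted line --- at the cost of an extra partial-summation and rearrangement step; the paper's direct contour argument is shorter once the general framework of Theorem \ref{main-th} is in place.
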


\begin{theorem} \label{sum-of-divisor-function}
Let $K_1$ and $K_2$ be two number fields of degree $d_1$ and $d_2$ and discriminant $D_{K_1}$ and $D_{K_2}$, respectively. Suppose $K_1$ and $K_2$ satisfy the following conditions.

\begin{itemize}
\item[(i)] Both $K_1$ and $K_2$ are Galois over $\mathbb{Q}$;


\item[(ii)] $\gcd(D_{K_1}, D_{K_2}) = 1$.

\end{itemize}

\noindent
Then for positive integers $a$, $b$ and any $\epsilon > 0$, we have, $$\displaystyle\sum_{n \leq x}\sigma_{a}^{K_1}(n) \sigma_{b}^{K_2}(n) = \frac{\zeta(1+a+b)\zeta_{K_1}(1+b)\zeta_{K_2}(1+a)}{c_{K_1K_2}^{-1}(1+a+b)}x^{1+a+b}+O\left(x^{(1+a+b)-\frac{3}{d_1d_2 +6}+\epsilon}\right),$$ where $c_{K_1K_2}$ is the residue of $\zeta_{K_1K_2}(s)$ at $s=1.$

\end{theorem}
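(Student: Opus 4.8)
The plan is to mirror the proof of Theorem~\ref{funccc}: I will write down the Dirichlet series of the arithmetic function $\sigma_a^{K_1}(n)\sigma_b^{K_2}(n)$, factor it as a product of Dedekind zeta functions times a Dirichlet series converging absolutely in a wide half-plane, and then extract the asymptotics by Perron's formula and a contour shift. To set up, I record the elementary identity $\sigma_a^{K}(n)=\sum_{e\mid n}a_K(e)\,e^{a}$, where $a_K(e)=\#\{\mathfrak a\subseteq\mathcal O_K:N(\mathfrak a)=e\}$, so that $\sum_{n\ge1}\sigma_a^{K}(n)n^{-s}=\zeta(s)\zeta_K(s-a)$. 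As $\sigma_a^{K_1}$ and $\sigma_b^{K_2}$ are multiplicative, so is their product, hence $G(s):=\sum_{n\ge1}\sigma_a^{K_1}(n)\sigma_b^{K_2}(n)n^{-s}$ has an Euler product $G(s)=\prod_p G_p(s)$ with
\begin{equation*}
G_p(s)=(1-p^{-s})^{-1}\sum_{j,k\ge0}a_{K_1}(p^{j})\,a_{K_2}(p^{k})\,p^{\,ja+kb-\max(j,k)s},
\end{equation*}
the factor $(1-p^{-s})^{-1}$ coming from the $\zeta(s)$ above.

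The heart of the matter is the factorization
\begin{equation*}
G(s)=\zeta(s)\,\zeta_{K_1}(s-a)\,\zeta_{K_2}(s-b)\,\zeta_{K_1K_2}(s-a-b)\,U(s),
\end{equation*}
where $K_1K_2$ is the compositum and $U(s)=\sum_n u(n)n^{-s}$ admits an Euler product converging absolutely for $\Re(s)>\tfrac{a+b+1}{2}$. One proves this prime by prime. Since $K_1,K_2$ are Galois with $\gcd(D_{K_1},D_{K_2})=1$, one has $K_1\cap K_2=\mathbb Q$ and $\operatorname{Gal}(K_1K_2/\mathbb Q)\cong\operatorname{Gal}(K_1/\mathbb Q)\times\operatorname{Gal}(K_2/\mathbb Q)$, so for every unramified $p$ its residue degree in $K_1K_2$ is the least common multiple of its residue degrees in $K_1$ and $K_2$. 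Expressing $G_p(s)$ through the splitting type of $p$ and comparing with the product of the local factors of $\zeta(s)\zeta_{K_1}(s-a)\zeta_{K_2}(s-b)\zeta_{K_1K_2}(s-a-b)$, one checks that the two agree up to $1+O\!\left(p^{\,a+b-2\Re(s)}\right)$: the ``diagonal'' terms $j=k$ (for which $\operatorname{lcm}(p^{j},p^{k})=p^{j}$) account to leading order for the Euler factor of $\zeta_{K_1K_2}(s-a-b)$, while the first genuine discrepancy is of size $p^{\,a+b-2\Re(s)}$, reflecting the fact that a common prime factor of the two divisor variables collapses under the least common multiple. The finitely many ramified primes contribute fixed rational functions in $p^{-s}$ that are holomorphic and bounded in the relevant region. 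Summing over $p$ gives the stated convergence of $U$. I expect this local bookkeeping — and, in particular, tracking the constant so that the residue below comes out exactly as claimed — to be the main technical obstacle.

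With the factorization in hand the remainder runs parallel to Theorem~\ref{funccc}. I apply the truncated Perron formula to $\sum_{n\le x}\sigma_a^{K_1}(n)\sigma_b^{K_2}(n)=\frac{1}{2\pi i}\int_{(c)}G(s)\frac{x^{s}}{s}\,ds$ with $c=1+a+b+\epsilon$ and move the contour to $\Re(s)=1+a+b-\tfrac{3}{d_1d_2+6}+\epsilon$. Because $a,b\ge1$ and $\tfrac{3}{d_1d_2+6}<1$, the only singularity crossed is the simple pole of $\zeta_{K_1K_2}(s-a-b)$ at $s=1+a+b$ (the poles of $\zeta(s),\zeta_{K_1}(s-a),\zeta_{K_2}(s-b)$ at $s=1,\,1+a,\,1+b$ all lie to the left of the new line), and its residue is the main term $\dfrac{c_{K_1K_2}\,\zeta(1+a+b)\,\zeta_{K_1}(1+b)\,\zeta_{K_2}(1+a)}{1+a+b}\,x^{1+a+b}$. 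On the shifted line $\zeta(s),\zeta_{K_1}(s-a),\zeta_{K_2}(s-b)$ and $U(s)$ are bounded, since they are evaluated inside half-planes of absolute convergence, so the remaining integral is controlled purely by the growth of the degree-$d_1d_2$ Dedekind zeta function $\zeta_{K_1K_2}(s-a-b)$ on the line $\Re=1-\tfrac{3}{d_1d_2+6}+\epsilon$; estimating this and balancing it against the Perron truncation error, with the cut-off $T$ optimized in terms of $x$, is done exactly as in the proof of Theorem~\ref{funccc} with $d$ replaced by $d_1d_2$ (equivalently, as in \cite{lu1}), yielding the error term $O\!\left(x^{\,1+a+b-\frac{3}{d_1d_2+6}+\epsilon}\right)$.
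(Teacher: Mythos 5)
Your route is the same as the paper's: factor the generating Dirichlet series as $\zeta(s)\,\zeta_{K_1}(s-a)\,\zeta_{K_2}(s-b)\,\zeta_{K_1K_2}(s-a-b)\,U(s)$ by matching Euler factors at the $p^{-s}$ level through $a_{K_1}(p)a_{K_2}(p)=a_{K_1K_2}(p)$ (Lemma \ref{compositum-lemma}; your Galois-theoretic derivation of this is fine), then apply Perron's formula and shift the contour as in Theorem \ref{main-th}. However, two quantitative steps are off. The first is minor: $U(s)$ converges absolutely only for $\Re(s)>a+b+\tfrac12$, not for $\Re(s)>\tfrac{a+b+1}{2}$. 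The $(j,k)=(1,1)$ term already matches the linear term of the $\zeta_{K_1K_2}(s-a-b)$ factor exactly, so the first genuine mismatch sits in the $p^{-2s}$ coefficient, where $G_p$ contributes $a_{K_1}(p^2)a_{K_2}(p^2)p^{2(a+b)}$ against $a_{K_1K_2}(p^2)p^{2(a+b)}$ plus lower-order cross terms from the zeta product; these do not cancel in general (for $K_1=\mathbb{Q}(i)$, $K_2=\mathbb{Q}(\sqrt{-3})$, $p=7$ one gets $1\cdot 3$ versus $2$), so $U_p(s)=1+O_{\epsilon}\!\left(p^{2(a+b)+\epsilon-2\sigma}\right)$ and convergence requires $\sigma>a+b+\tfrac12$. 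This happens to be harmless because your contour never goes left of that line, but the stated region is false. (Also note that the residue at $s=1+a+b$ genuinely carries the extra factor $U(1+a+b)$, which is not $1$ in general; your worry about "tracking the constant" is justified, though the paper's own statement has the same issue.)

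The second slip actually costs you the theorem as stated. Shifting only to $\Re(s)=1+a+b-\tfrac{3}{d_1d_2+6}+\epsilon$ does not yield the claimed error term: on that line, Lemmas \ref{phragman-lindeloff} and \ref{heath-brown} give $\zeta_{K_1K_2}(s-a-b)\ll(1+|t|)^{\frac{d_1d_2}{d_1d_2+6}+\epsilon}$, a positive power of $|t|$, so the vertical integral is $\ll x^{1+a+b-\frac{3}{d_1d_2+6}}\,T^{\frac{d_1d_2}{d_1d_2+6}+\epsilon}$; balancing this against the Perron truncation error $x^{1+a+b+\epsilon}T^{-1}$ forces $T=x^{\frac{3}{2d_1d_2+6}}$ and produces only $O\!\left(x^{1+a+b-\frac{3}{2d_1d_2+6}+\epsilon}\right)$, which is weaker than asserted. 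To recover the exponent $\tfrac{3}{d_1d_2+6}$ you must shift all the way to $\Re(s)=a+b+\tfrac12+\epsilon$ (legitimate: $U$ still converges absolutely there, and the poles of $\zeta$, $\zeta_{K_1}(\cdot-a)$, $\zeta_{K_2}(\cdot-b)$ at $1$, $1+a$, $1+b$ lie to the left of this line since $a,b\geq1$, so only the pole at $1+a+b$ is crossed); there the vertical integral is $\ll x^{a+b+\frac12+\epsilon}T^{\frac{d_1d_2}{6}+\epsilon}$ and the choice $T=x^{\frac{3}{d_1d_2+6}}$ gives the stated bound. This is exactly the contour used in the paper's proof of Theorem \ref{main-th}, to which its proof of this theorem reduces.
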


\begin{remark}
Theorem \ref{sum-of-divisor-function} can be generalized for any finite number of number fields with pairwise coprime discriminants.
\end{remark}

\noindent
Finally, as  an application of Theorem \ref{omega-type-theorem-kc-rn}, we get the following corollary.

\begin{corollary} \label{corollary 1}
Let $K$ be a quadratic field with discriminant $D_K$. Then we have

\begin{equation*}
\limsup\limits_{x\to \infty} \frac{\displaystyle\sum_{n \leq x}{\tau_{k} ^{K}(n) - xP_{k}(\log x)}}{x^{\frac{1}{2} - \frac{1}{4k}}} = +\infty,
\end{equation*}

\noindent
and 

\begin{equation*}
\liminf\limits_{x\to \infty} \frac{\displaystyle\sum_{n \leq x}{\tau_{k} ^{K}(n) - xP_{k}(\log x)}}{x^{\frac{1}{2} - \frac{1}{4k}}} = -\infty,
\end{equation*}
where $P_k$ is a polynomial of degree $k-1$ appearing in the main term in the asymptotic formula of $\displaystyle\sum_{n \leq x}\tau _{k}^{K}(n)$.
\end{corollary}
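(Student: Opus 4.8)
\noindent
The plan is to deduce Corollary~\ref{corollary 1} directly from Theorem~\ref{omega-type-theorem-kc-rn}, taking the functional-equation data from a suitably normalised power of the Dedekind zeta function of $K$. Since $K$ is quadratic, $\zeta_K(s)=\zeta(s)L(s,\chi_{D_K})$ is holomorphic except for a simple pole at $s=1$, and $\sum_{n\geq 1}\tau_k^K(n)n^{-s}=\zeta_K(s)^k$. With the completed function $\Lambda_K(s)=|D_K|^{s/2}\gamma_K(s)\zeta_K(s)$, where $\gamma_K(s)=\pi^{-s}\Gamma(s/2)^2$ if $K$ is real and $\gamma_K(s)=2(2\pi)^{-s}\Gamma(s)$ if $K$ is imaginary, one has $\Lambda_K(s)=\Lambda_K(1-s)$. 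Raising to the $k$-th power and absorbing the powers of $|D_K|$, $\pi$ and $2$ into a dilation of the summation variable, I obtain a functional equation of the shape \eqref{omega-type-eq-2} with $\delta=1$, with $\Delta(s)$ the product of Gamma factors coming from $\gamma_K(s)^k$ (that is, $\Gamma(s/2)^{2k}$, respectively $\Gamma(s)^k$), so that $M=k$ since for the Dedekind zeta of a quadratic field the exponents $\alpha_\nu$ sum to $1$, and with $\phi(s)=\psi(s)=\sum_n\tau_k^K(n)(\kappa n)^{-s}$ for a positive constant $\kappa=\kappa(D_K)$; thus $a_n=b_n=\tau_k^K(n)$ and $\lambda_n=\mu_n=\kappa n$, the index $n$ ranging over the norms of integral ideals of $\mathcal{O}_K$. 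In particular $\theta=\tfrac{\delta}{2}-\tfrac{1}{4M}=\tfrac12-\tfrac1{4k}$, exactly the exponent in the statement.

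\noindent
I then identify the residual function $Q(x)=\frac{1}{2\pi i}\int_C\phi(s)\tfrac{x^s}{s}\,ds$. Inside the contour the integrand has only the pole of order $k$ at $s=1$ (from $\zeta_K(s)^k$) and the simple pole at $s=0$; the residue at $s=1$ is $x$ times a polynomial of degree $k-1$ in $\log x$ --- this is precisely the polynomial $P_k$ in the asymptotic formula for $\sum_{n\leq x}\tau_k^K(n)$ --- and the residue at $s=0$ is a constant. Since $A(x)=\sum_{\lambda_n\leq x}a_n=\sum_{n\leq x}\tau_k^K(n)$, it follows that
\[
A(x)-Q(x)=\sum_{n\leq x}\tau_k^K(n)-xP_k(\log x)+O(1).
\]
As $\theta=\tfrac12-\tfrac1{4k}>0$, the additive constant is absorbed after division by $x^{\theta}$, and $\mathrm{Re}\,\bigl(A(x)-Q(x)\bigr)=A(x)-Q(x)$ because all quantities are real.

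\noindent
There remain the two hypotheses of Theorem~\ref{omega-type-theorem-kc-rn}. The divergence condition \eqref{eq-3} has exponent $\tfrac{M\delta+1/2}{2M}=\tfrac12+\tfrac1{4k}<1$; since $\tau_k^K(n)\geq 0$ and $\sum_n\tau_k^K(n)n^{-\sigma}$ diverges for every real $\sigma\leq 1$, it suffices to let the subsequence $\{\mu_{n_k}\}$ contain the rescaled split primes $\kappa p$, for which $b_{n_k}=\tau_k^K(p)=2k$, giving $\sum_{p\text{ split}}\frac{2k}{(\kappa p)^{1/2+1/(4k)}}=+\infty$ by Mertens' estimate. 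The remaining hypothesis --- that $\{\mu_n\}$ contain a subsequence $\{\mu_{n_k}\}$ whose elements $\mu_{n_k}^{1/(2M)}$ are $\pm1$-generating and rigid in the precise sense of Theorem~\ref{omega-type-theorem-kc-rn} --- is the substantive point, and I would verify it using the multiplicative structure of the set of ideal norms of the quadratic field $K$, choosing $\{\mu_{n_k}\}$ so as to be simultaneously compatible with the divergence requirement above. With both hypotheses in place, \eqref{desired-1} and \eqref{desired-2} applied with $\theta=\tfrac12-\tfrac1{4k}$ are exactly the two displayed limits of Corollary~\ref{corollary 1}; the imaginary-part variant of Theorem~\ref{omega-type-theorem-kc-rn} is not needed, as $b_n=\tau_k^K(n)$ is real.

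\noindent
The step I expect to be the main obstacle is precisely this arithmetic condition on $\{\mu_n\}$ together with its compatibility with \eqref{eq-3}: one must exhibit a single subsequence of the (rescaled) ideal-norm sequence that is at once rigid enough for the $\pm1$-combination requirement and dense enough to make \eqref{eq-3} diverge. For $K=\mathbb{Q}(i)$ and $k=1$ this is the classical input behind the Hardy--Ingham $\Omega_\pm$-theorem for $\sum_{n\leq x}r_2(n)-\pi x$, and the general quadratic case proceeds along the same lines.
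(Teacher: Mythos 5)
Your proposal follows essentially the same route as the paper: write the generating Dirichlet series as $\zeta_K(s)^k$, raise the functional equation of $\zeta_K$ to the $k$-th power to fit the framework of Theorem~\ref{omega-type-theorem-kc-rn} with $M=k$, $\delta=1$, and read off $\theta=\tfrac12-\tfrac1{4k}$. You are in fact more thorough than the paper, which merely identifies the parameters and invokes Theorem~\ref{omega-type-theorem-kc-rn} without checking either the divergence condition \eqref{eq-3} (which you verify via split primes) or the $\pm1$-representability hypothesis on the norm sequence (which you correctly flag as the one substantive point still to be checked; the paper leaves it entirely implicit).
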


\begin{remark}
For the function $\sigma_{a}^{K}(n)$, the functional equation exists for the corresponding $L$-function $\zeta (s)\cdot \zeta_{K}(s-a)$ but it does not satisfy equation \eqref{omega-type-eq-2}. Therefore, corresponding results similar to \eqref{desired-1} and \eqref{desired-2} cannot be obtained using Theorem \ref{omega-type-theorem-kc-rn}.
\end{remark}

\begin{remark}
In this paper, for most of the cases we do not have the functional equation for the $L$-series corresponding to the summatory function of a given arithmetic function. Instead of that, we get an expression of the form $L(s) = \zeta _{k} ^{K}(s) \cdot U(s)$ for some Dirichlet series $U(s)$ which is absolutely convergent for $Re(s) > \frac{1}{2}$. Therefore, it is quite difficult to obtain expressions similar to \eqref{desired-1} and \eqref{desired-2}.
\end{remark}

\medskip

\section{Preliminaries}

\noindent
In this section, we list all the necessary results required for the proofs of Theorem \ref{main-th}-Theorem \ref{sum-of-divisor-function}. For that, let us start with a few fundamental properties of the Dedekind zeta-function associated to a number field. For an algebraic number field $K$, the Dedekind zeta-function of $K$ is defined by $$\zeta_K (s)=\displaystyle\sum_{\mathfrak{a} \subseteq \mathcal{O}_K}\frac{1}{N(\mathfrak{a})^s} \mbox{ for } Re(s) > 1,$$ where the sum runs over all the non-zero ideals in the ring $\mathcal{O}_K$ and the series is absolutely convergent in the half-plane $Re(s) > 1$. When we expand the above expression in the form of a Dirichlet series, it has the following form.

\begin{equation}\label{zeta-fn}
\zeta_K (s)=\displaystyle\sum_{\mathfrak{a} \subseteq \mathcal{O}_K}\frac{1}{N(\mathfrak{a})^s}=\displaystyle\sum_{n=1}^{\infty}\frac{a_{K}(n)}{n^s} \mbox{ for } Re(s) > 1.
\end{equation}
Here, $a_{K}(n)$ denotes the number of ideals in $\mathcal{O}_K$ having norm $n$, often called the {\it ideal counting function} of the number field $K$. Our first lemma asserts that $a_{K}(n)$ is a multiplicative function of $n$ and the proof can be found in \cite{kc2}.

\begin{lemma} \cite{kc2}\label{lemma-mult}
For a number field $K$, let $a_{K}(n)$ be the number of ideals in $\mathcal{O}_K$ with norm $n$. Then $a_{K}(n)$ is a multiplicative function of $n$ and for any $\epsilon > 0$, 
\begin{equation*}\label{coeff-multiplicative}
a_{K}(n) \ll_{\epsilon} n^{\epsilon} \mbox{ holds }.
\end{equation*}
\end{lemma}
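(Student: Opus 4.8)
The plan is to obtain both assertions from the Euler product of the Dedekind zeta-function. First I would recall that unique factorization of nonzero ideals of $\mathcal{O}_K$ into prime ideals gives, for $Re(s) > 1$, the absolutely convergent identity $\zeta_K(s) = \prod_{\mathfrak{p}} \bigl(1 - N(\mathfrak{p})^{-s}\bigr)^{-1}$, the product running over all nonzero prime ideals $\mathfrak{p}$ of $\mathcal{O}_K$. Collecting the primes $\mathfrak{p}$ lying above a fixed rational prime $p$, this becomes $\zeta_K(s) = \prod_p E_p(p^{-s})$ with $E_p(T) = \prod_{\mathfrak{p} \mid p}(1 - T^{f_{\mathfrak{p}}})^{-1}$, where $f_{\mathfrak{p}}$ denotes the residue degree of $\mathfrak{p}$ over $p$; each $E_p$ is a formal power series in $T$ with non-negative integer coefficients. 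Expanding $\prod_p E_p(p^{-s})$ and comparing with \eqref{zeta-fn} shows that $a_K(n)$ is the product, over the prime powers $p^{v}$ exactly dividing $n$, of the coefficient of $T^{v}$ in $E_p$; hence $a_K(mn) = a_K(m) a_K(n)$ whenever $\gcd(m,n) = 1$. Equivalently, one checks this combinatorially: splitting an ideal of norm $mn$ into its prime-power parts lying above primes dividing $m$ and those dividing $n$ sets up a bijection with pairs of ideals of norms $m$ and $n$.

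For the size estimate I would show that $a_K(n) \le \tau_d(n)$ for all $n$, where $\tau_d$ is the $d$-fold divisor function, i.e. the coefficient of $n^{-s}$ in $\zeta(s)^d$. By multiplicativity and $\tau_d(p^k) = \binom{k+d-1}{d-1}$, it suffices to check $a_K(p^k) \le \binom{k+d-1}{d-1}$ for every prime $p$ and every $k \ge 0$. Writing $p\mathcal{O}_K = \prod_{i=1}^g \mathfrak{p}_i^{e_i}$ with $N(\mathfrak{p}_i) = p^{f_i}$, the ideals of norm $p^k$ are exactly the products $\prod_i \mathfrak{p}_i^{a_i}$ with $a_i \ge 0$ and $\sum_i a_i f_i = k$, so $a_K(p^k)$ is the coefficient of $T^k$ in $E_p(T) = \prod_{i=1}^g (1-T^{f_i})^{-1}$. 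Since $e_i \ge 1$, the identity $\sum_i e_i f_i = d$ forces $\sum_i f_i \le d$, and then $(1-T)^{-d} = E_p(T) \cdot \prod_i \frac{1-T^{f_i}}{(1-T)^{f_i}} \cdot (1-T)^{-(d - \sum_i f_i)}$, where the last two factors are power series with non-negative coefficients; hence $E_p(T)$ is dominated coefficient-wise by $(1-T)^{-d}$. This gives $a_K(p^k) \le \binom{k+d-1}{d-1}$, so $a_K(n) \le \tau_d(n)$, and the classical estimate $\tau_d(n) \ll_{\epsilon} n^{\epsilon}$ (immediate from multiplicativity of $\tau_d$ and the local comparison $\binom{k+d-1}{d-1} \le p^{k\epsilon}$ for all but a bounded set of primes) yields $a_K(n) \ll_{\epsilon} n^{\epsilon}$.

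There is no genuine obstacle here: the lemma is classical and reproduced from \cite{kc2} only for completeness. The one point that calls for a little care is the inequality $a_K(p^k) \le \tau_d(p^k)$, which is precisely the coefficient-wise domination of $\zeta_K(s)$ by $\zeta(s)^d$ localized at $p$; its proof hinges on remembering the fundamental identity $\sum_i e_i f_i = d$, so that the residue degrees of the primes above $p$ cannot collectively exceed $d$. Everything else is routine bookkeeping with Dirichlet series.
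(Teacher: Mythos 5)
Your proof is correct and complete. The paper itself gives no argument for this lemma---it is simply quoted from the reference of Chandrasekharan and Narasimhan---so there is nothing to compare against; your write-up supplies the standard proof that the citation stands in for. The two points that need care both check out: multiplicativity follows from the Euler product of $\zeta_K$ grouped over rational primes (equivalently, the bijection between ideals of norm $mn$ and pairs of ideals of coprime norms $m$ and $n$), and the coefficient-wise domination $E_p(T) \preceq (1-T)^{-d}$ is justified correctly, since $\sum_i f_i \leq \sum_i e_i f_i = d$ and each factor $\bigl(1-T^{f_i}\bigr)/(1-T)^{f_i} = \bigl(1+T+\cdots+T^{f_i-1}\bigr)(1-T)^{-(f_i-1)}$ has non-negative coefficients, which yields $a_K(p^k) \leq \binom{k+d-1}{d-1} = \tau_d(p^k)$ and hence $a_K(n) \leq \tau_d(n) \ll_{\epsilon} n^{\epsilon}$.
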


\vspace{2mm}

\noindent
Since the coefficients of the Dirichlet series \eqref{zeta-fn} are multiplicative, $\zeta_{K}(s)$ admits the following Euler product expansion.

\begin{equation*}\label{Euler-product-expansion}
\zeta_K (s)=\displaystyle\sum_{n=1}^{\infty}\frac{a_{K}(n)}{n^s}=\displaystyle\prod_{p}\left(1 + \frac{a_p}{p^s} + \frac{a_{p^2}}{p^{2s}} +\ldots \right) \mbox{ for } Re(s) > 1.
\end{equation*}

\vspace{2mm}

\noindent
As we are dealing with several number fields at a time, it will be quite useful to know the behaviour of the ideal counting function at prime arguments in the compositum of two number fields. 
\begin{lemma} \cite{lu-ma} \label{compositum-lemma}
Let $K$ and $L$ be two number fields with discriminants $D_K$ and $D_L$, respectively. Suppose that $\gcd(D_K, D_L)=1$. Then for any prime number $p$, we have $$a_{KL}(p)=a_K(p)a_L(p).$$
\end{lemma}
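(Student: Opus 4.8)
The plan is to reinterpret the numbers $a_K(p)$, $a_L(p)$, $a_{KL}(p)$ as counts of $\mathbb{F}_p$-algebra homomorphisms and then exploit the fact that $\operatorname{Hom}(-,\mathbb{F}_p)$ turns a tensor product into a Cartesian product. First I would record the elementary observation that an ideal of $\mathcal{O}_K$ of norm $p$ is automatically a prime ideal $\mathfrak{p}$ dividing $p$ with residue field $\kappa(\mathfrak{p})=\mathbb{F}_p$ (and conversely), so that $a_K(p)$ equals the number of primes $\mathfrak{p}\mid p$ with $f(\mathfrak{p}\mid p)=1$. Writing $p\mathcal{O}_K=\prod_{\mathfrak{p}\mid p}\mathfrak{p}^{e_\mathfrak{p}}$ and using that $\mathbb{F}_p$ is reduced and has no proper subfield, any homomorphism $\mathcal{O}_K/p\mathcal{O}_K\to\mathbb{F}_p$ factors through $(\mathcal{O}_K/p\mathcal{O}_K)_{\mathrm{red}}=\prod_{\mathfrak{p}\mid p}\kappa(\mathfrak{p})$, then through a single projection $\kappa(\mathfrak{p})$, which admits a map to $\mathbb{F}_p$ exactly when $\kappa(\mathfrak{p})=\mathbb{F}_p$; ramified primes only add nilpotents, which contribute nothing. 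This gives
\[
a_K(p)=\#\operatorname{Hom}_{\mathbb{F}_p\text{-alg}}\bigl(\mathcal{O}_K/p\mathcal{O}_K,\ \mathbb{F}_p\bigr),
\]
and the identical identity for $L$ and for the compositum $KL$ (itself a number field, with ring of integers $\mathcal{O}_{KL}$).

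Next I would bring in the hypothesis $\gcd(D_K,D_L)=1$ via the classical structure theorem for such compositums: the coprimality forces $[KL:\mathbb{Q}]=d_Kd_L$ and the multiplication map induces a ring isomorphism $\mathcal{O}_K\otimes_{\mathbb{Z}}\mathcal{O}_L\cong\mathcal{O}_{KL}$ (equivalently $\mathcal{O}_{KL}=\mathcal{O}_K\mathcal{O}_L$, with $\operatorname{disc}(KL)=D_K^{d_L}D_L^{d_K}$). Reducing this isomorphism modulo $p$ yields
\[
\mathcal{O}_{KL}/p\mathcal{O}_{KL}\ \cong\ (\mathcal{O}_K/p\mathcal{O}_K)\otimes_{\mathbb{F}_p}(\mathcal{O}_L/p\mathcal{O}_L).
\]
Then the universal property of the tensor product gives, for any two $\mathbb{F}_p$-algebras $A$ and $B$, a bijection $\operatorname{Hom}_{\mathbb{F}_p}(A\otimes_{\mathbb{F}_p}B,\mathbb{F}_p)\cong\operatorname{Hom}_{\mathbb{F}_p}(A,\mathbb{F}_p)\times\operatorname{Hom}_{\mathbb{F}_p}(B,\mathbb{F}_p)$, since a homomorphism from $A\otimes_{\mathbb{F}_p}B$ to the field $\mathbb{F}_p$ is precisely an independent pair of homomorphisms out of the two factors. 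Taking $A=\mathcal{O}_K/p\mathcal{O}_K$ and $B=\mathcal{O}_L/p\mathcal{O}_L$ and combining with the two displays above produces $a_{KL}(p)=a_K(p)\,a_L(p)$.

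The step I expect to be the main obstacle is the ring isomorphism $\mathcal{O}_{KL}\cong\mathcal{O}_K\otimes_{\mathbb{Z}}\mathcal{O}_L$: this is the only place the coprimality of the discriminants is genuinely used — it simultaneously guarantees that the field degrees multiply and that no rational prime ramifies in both $K$ and $L$ — and the conclusion is false without it (already $\mathcal{O}_K\otimes_{\mathbb{Z}}\mathcal{O}_K$ fails to be reduced). An alternative, more hands-on route would instead analyze the restriction map $\mathfrak{P}\mapsto(\mathfrak{P}\cap\mathcal{O}_K,\ \mathfrak{P}\cap\mathcal{O}_L)$ on the primes of $KL$ above $p$, using multiplicativity of residue degrees in towers together with the fact that $p$ is unramified in at least one of $K,L$ (so one of the completions is $\mathbb{Q}_p$) to show that this map restricts to a bijection between the degree-one primes of $KL$ above $p$ and pairs of degree-one primes of $K$ and of $L$ above $p$; but since that argument leans on the same arithmetic input, the $\operatorname{Hom}$-and-tensor formulation seems cleanest.
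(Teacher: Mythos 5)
Your argument is correct, and it is worth noting that the paper itself offers no proof of this lemma --- it is imported verbatim from L\"u--Ma \cite{lu-ma} --- so your write-up is a genuine self-contained substitute rather than a variant of something in the text. The chain of identifications is sound: an ideal of norm $p$ is exactly a degree-one prime above $p$, which is exactly an $\mathbb{F}_p$-point of $\mathcal{O}_K/p\mathcal{O}_K$ (the CRT decomposition plus the fact that a map to a field kills nilpotents and selects one factor handles ramified $p$ uniformly, so the statement really does hold for \emph{all} primes as claimed); the tensor product is the coproduct of commutative $\mathbb{F}_p$-algebras, so $\operatorname{Hom}(A\otimes_{\mathbb{F}_p}B,\mathbb{F}_p)$ factors as the product of Hom-sets; and the whole computation reduces to the isomorphism $\mathcal{O}_K\otimes_{\mathbb{Z}}\mathcal{O}_L\cong\mathcal{O}_{KL}$. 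You correctly identify that last isomorphism as the load-bearing step, but you state it as "classical" without indicating why coprimality of the discriminants even forces $[KL:\mathbb{Q}]=d_Kd_L$; that degree statement is not immediate from $K\cap L=\mathbb{Q}$ alone (which does not imply multiplicativity of degrees for non-Galois fields) and needs the observation that a prime ramifies in the Galois closure $\tilde K$ iff it ramifies in $K$, whence $\tilde K\cap\tilde L=\mathbb{Q}$ and the Galois groups split as a direct product. With the degrees multiplying, surjectivity of $\mathcal{O}_K\otimes\mathcal{O}_L\to\mathcal{O}_{KL}$ is the classical conductor--discriminant argument (Marcus, \emph{Number Fields}, Ch.~2) and injectivity follows by comparing ranks. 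If you fill in, or at least cite precisely, that one theorem, the proof is complete; it also has the advantage over the more common "match degree-one primes above $p$ in $K$, $L$, and $KL$" route of treating the ramified and unramified primes in one stroke.
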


\vspace{2mm}

\noindent
The next couple of lemmas, which relate the divisor function of a finite Galois extension to the ideal counting function, will play a crucial role in the course of the proofs of our theorems.

\begin{lemma} \cite{lu3}\label{divisor-ideal}
Let $k \geq 2$ be an integer and let $K/\mathbb{Q}$ be a finite Galois extension of odd degree $d$. Then $$\tau_{k}^{K}(p^2)=\frac{k^2d + k}{2}a_{K}(p)$$ holds for all but finitely many prime numbers $p$.
\end{lemma}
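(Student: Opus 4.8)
The plan is to reduce the asserted identity to the local behaviour of the ideal counting function $a_K$ at a prime $p$, and then to carry out an elementary count. First I would discard the finitely many ramified primes, i.e.\ those dividing the discriminant $D_K$. For every remaining prime $p$, the ideal $p\mathcal{O}_K$ is unramified, and since $K/\mathbb{Q}$ is Galois it factors as $p\mathcal{O}_K=(\mathfrak{p}_1\cdots\mathfrak{p}_g)$ into $g$ distinct prime ideals of a common residue degree $f$ with $fg=d$. From the definition \eqref{defn} of $\tau_k^K$ together with the complete multiplicativity of the norm, one has $\sum_{n\geq 1}\tau_k^K(n)n^{-s}=\zeta_K(s)^k$; equivalently, $\tau_k^K$ is the $k$-fold Dirichlet convolution of $a_K$ with itself. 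Hence it suffices to understand $a_K(p)$ and $a_K(p^2)$.

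Next I would record these two local values. An ideal of norm $p$ is necessarily a prime ideal above $p$ of residue degree $1$, so $a_K(p)=d$ if $f=1$ and $a_K(p)=0$ if $f>1$. An ideal of norm $p^2$ has the shape $\mathfrak{p}_1^{a_1}\cdots\mathfrak{p}_g^{a_g}$ with $f(a_1+\cdots+a_g)=2$, which forces $f\in\{1,2\}$. This is precisely the point where the hypothesis that $d$ is odd is used: since $f\mid d$, the case $f=2$ cannot occur, so either $f=1$ or $f\geq 3$. When $f=1$ there are $\binom{d+1}{2}=\tfrac{d(d+1)}{2}$ such ideals (the number of ways to write $2=a_1+\cdots+a_d$ with $a_i\geq 0$), and when $f\geq 3$ there are none.

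Finally I would assemble the count. Splitting the Dirichlet convolution according to the two partitions of $2$ into $k$ ordered nonnegative parts --- one part equal to $2$, or two parts equal to $1$ --- and using $a_K(1)=1$ gives
\begin{equation*}
\tau_k^K(p^2)=k\,a_K(p^2)+\binom{k}{2}\,a_K(p)^2.
\end{equation*}
If $f\geq 3$, both terms on the right vanish, as does $a_K(p)$, and the claimed identity holds trivially. If $f=1$, substituting $a_K(p)=d$ and $a_K(p^2)=\tfrac{d(d+1)}{2}$ yields
\begin{equation*}
\tau_k^K(p^2)=\frac{kd(d+1)}{2}+\frac{k(k-1)d^2}{2}=\frac{kd(kd+1)}{2}=\frac{k^2d+k}{2}\cdot d=\frac{k^2d+k}{2}\,a_K(p),
\end{equation*}
which is the assertion. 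The argument presents no serious difficulty; the only step that genuinely requires attention is the case analysis on the residue degree $f$, and in particular the use of $d$ being odd to rule out $f=2$ --- the one case in which the stated formula would otherwise fail.
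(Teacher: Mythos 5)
Your proof is correct, and it is self-contained. Note that the paper itself offers no proof of this lemma --- it is quoted from L\"{u} and Yang \cite{lu3} --- so there is no internal argument to compare against; yours is the standard local computation and fills the gap completely. The key steps all check out: $\tau_k^K$ is the $k$-fold Dirichlet convolution of $a_K$, giving $\tau_k^K(p^2)=k\,a_K(p^2)+\binom{k}{2}a_K(p)^2$; at an unramified prime the values $a_K(p)$ and $a_K(p^2)$ are determined by the common residue degree $f$; and the oddness of $d$ rules out $f=2$, which is precisely the case where the identity would fail (there $a_K(p)=0$ while $a_K(p^2)=d/2>0$).
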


\begin{lemma} \cite{lu1} \label{lu-acta-math-hungarica}
Let $K/\mathbb{Q}$ be finite Galois extension of degree $d$. Then for any positive integer $k$, the relation $$a_{K}(p)^{k} = d^{k-1}a_{K}(p)$$ holds for all but finitely many prime numbers $p$.
\end{lemma}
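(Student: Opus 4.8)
The plan is to exploit the splitting behaviour of primes in a Galois extension together with the elementary observation that an integral ideal of prime norm must itself be a prime ideal. First I would discard the finitely many primes $p$ that ramify in $K$, i.e. those dividing the discriminant $D_K$; for every remaining $p$ the ideal $p\mathcal{O}_K$ factors into a product $\mathfrak{p}_1 \cdots \mathfrak{p}_g$ of distinct prime ideals. Since $K/\mathbb{Q}$ is Galois, $\mathrm{Gal}(K/\mathbb{Q})$ acts transitively on $\{\mathfrak{p}_1, \ldots, \mathfrak{p}_g\}$, so all the $\mathfrak{p}_i$ have a common residue degree $f$, and $fg = d$.

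Next I would compute $a_K(p)$ directly. If $\mathfrak{a} \subseteq \mathcal{O}_K$ is an integral ideal with $N(\mathfrak{a}) = p$, then, writing $\mathfrak{a}$ as a product of prime powers and using multiplicativity of the norm, the primality of $p$ forces $\mathfrak{a}$ to be a single prime ideal lying above $p$ with residue degree $1$. Hence $a_K(p)$ is exactly the number of $\mathfrak{p}_i$ with $f = 1$: it equals $d$ when $p$ splits completely (so $f = 1$ and $g = d$) and equals $0$ otherwise. In particular $a_K(p) \in \{0, d\}$ for every unramified $p$.

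With $a_K(p) \in \{0, d\}$ established, the identity $a_K(p)^k = d^{k-1} a_K(p)$ is immediate: both sides vanish when $a_K(p) = 0$, and both sides equal $d^k$ when $a_K(p) = d$. This finishes the proof. The only points requiring care are the standard facts from the structure theory of prime decomposition in Galois extensions, namely that all primes above an unramified $p$ share the same residue degree, and the essential but one-line remark that an ideal of prime norm is forced to be prime of residue degree one; beyond these, the argument is just a case check, so there is no real obstacle here.
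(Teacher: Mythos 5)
Your proof is correct: the reduction to unramified primes, the observation that an ideal of prime norm is a degree-one prime above $p$, and the resulting dichotomy $a_K(p)\in\{0,d\}$ (with $a_K(p)=d$ exactly when $p$ splits completely) give the identity immediately. The paper quotes this lemma from L\"{u}'s work without reproducing a proof, but your argument is the standard one and matches the splitting dichotomy the paper itself invokes later (in the proof of Theorem \ref{sum-of-divisor-function}, where incidentally the displayed value ``$a_K(p)=p$'' for completely split $p$ should read $a_K(p)=d$).
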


\vspace{2mm}

\noindent
Now, we state the {\it Phragman-Lindel\"{o}ff} hypothesis which deals with the estimation of an analytic function in a given strip.
\begin{lemma} \cite{kowalski} \label{phragman-lindeloff}
Let $a, b$ be two real numbers with $a < b$ and let $f$ be an analytic function defined on an open neighbourhood of the strip $a \leq \sigma \leq b$, for some real numbers $a < b$, such that $|f(s)|=O(e^{|s|^A})$ for some $A \geq 0$ and for all $s$ satisfying $a \leq Re(s) \leq b$.
\begin{itemize}
\item[(i)] Assume that $|f(s)| \leq M \mbox{ for all } s$ on the boundary of the critical strip, i.e, for $\sigma = a$ or $\sigma = b$. Then we have $|f(s)| \leq M$ for all $s$ in the strip.

\item[(ii)] If there exist real numbers $M_a, M_b, \alpha, \beta$ such that for all $t \in \mathbb{R}$,  $$|f(a+it)|\leq M_{a}(1+|t|)^{\alpha}$$ and $$|f(b+it)|\leq M_{b}(1+|t|)^{\beta},$$ 
\end{itemize}
then for all $s=\sigma + it$ in the strip $a \leq Re(s) \leq b$, we have $$|f(\sigma + it)|\leq M_{a}^{\ell(\sigma)}M_{b}^{1-\ell(\sigma)}(1+|t|)^{\alpha \ell(\sigma)+\beta (1-\ell(\sigma))},$$ where $\ell$ is the linear function satisfying $\ell (a)=1$ and $\ell (b)=0$.
\end{lemma}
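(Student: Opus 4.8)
The plan is to prove this as the classical Phragmén--Lindel\"{o}ff principle, whose only subtlety over the ordinary maximum modulus principle is that the strip $a \le \sigma \le b$ is unbounded in the vertical direction. The first step I would take is a normalization: the affine substitution $s \mapsto w = (s-a)/(b-a)$ carries the strip onto $0 \le \mathrm{Re}(w) \le 1$, transforms the hypothesis $|f(s)| = O(e^{|s|^{A}})$ into a bound of the same type $O(e^{c|w|^{A}})$, and sends the two boundary lines to $\mathrm{Re}(w)=0,1$. Hence I may assume throughout that the strip is $0 \le \sigma \le 1$, and I will recover the general $\ell(\sigma)$ at the end by undoing the substitution, noting that $\ell(s) = (b-s)/(b-a)$ is the holomorphic extension of the linear function with $\ell(a)=1$, $\ell(b)=0$ and that $\mathrm{Re}\,\ell(s) = \ell(\sigma)$.

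For part (i) the plan is the standard regularization trick. I would fix $\epsilon > 0$ and an integer $n \equiv 2 \pmod 4$ with $n > A$, and consider $g_{\epsilon}(s) = f(s)\,\exp(\epsilon s^{n})$. The point of the congruence on $n$ is that for $s=\sigma+it$ with $\sigma\in[0,1]$ one has $\mathrm{Re}(s^{n}) = |s|^{n}\cos(n\arg s)$, and as $|t|\to\infty$ the angle $\arg s \to \pm\pi/2$ forces $\cos(n\arg s)\to -1$, so $\mathrm{Re}(s^{n})\to -\infty$ like $-|t|^{n}$. Since $n>A$, this Gaussian-type decay dominates the admissible growth $e^{c|s|^{A}}$, whence $g_{\epsilon}(s)\to 0$ uniformly in the strip as $|t|\to\infty$. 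On the two boundary lines $\mathrm{Re}(s^{n})$ is bounded above by a constant $R_{0}$, so $|g_{\epsilon}| \le M e^{\epsilon R_{0}}$ there; applying the maximum modulus principle on the rectangles $[0,1]\times[-T,T]$ and letting $T\to\infty$ gives $|g_{\epsilon}(s)|\le M e^{\epsilon R_{0}}$ throughout, and then $|f(s)| \le M e^{\epsilon(R_{0}-\mathrm{Re}(s^{n}))}$. Letting $\epsilon\to 0$ with $s$ fixed yields $|f(s)|\le M$, which is (i).

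For part (ii) I would first strip off the constants: the factor $M_{a}^{\ell(s)}M_{b}^{1-\ell(s)} = \exp(\ell(s)\log M_{a} + (1-\ell(s))\log M_{b})$ has modulus exactly $M_{a}^{\ell(\sigma)}M_{b}^{1-\ell(\sigma)}$ because $\log M_{a}$ and $\log M_{b}$ are real, so dividing $f$ by it reduces the claim to the case $M_{a}=M_{b}=1$; that is, $|F(a+it)|\le(1+|t|)^{\alpha}$ and $|F(b+it)|\le(1+|t|)^{\beta}$ must be shown to imply $|F(\sigma+it)|\le(1+|t|)^{\alpha\ell(\sigma)+\beta(1-\ell(\sigma))}$. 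To produce the required $\sigma$-dependent polynomial weight as the modulus of a holomorphic function I would use a power $(s-a+1)^{p(s)}$ of the linear function $s-a+1$, which stays in the right half-plane on the strip so that the principal logarithm is available, where $p(s)=\alpha\ell(s)+\beta(1-\ell(s))$ interpolates the two exponents. Dividing $F$ by this power matches the boundary weights, and I would then apply part (i) to the quotient to transfer the bound to the interior, finally multiplying the weight back in.

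The main obstacle is precisely this last construction. Because $p(s)$ has a nonconstant real slope in $s$, its imaginary part grows linearly in $t$, and when multiplied against $\arg(s-a+1)$, which tends to $\pm\pi/2$ rather than to $0$, it contributes a spurious factor of the shape $e^{\pm c t}$ to $|(s-a+1)^{p(s)}|$ that neither cancels nor is absorbable into a polynomial. The way I would handle this is to insert once more the regularizer $\exp(\epsilon s^{2})$, or $\exp(\epsilon s^{n})$ with $n\equiv 2\pmod 4$ large, whose modulus decays like $e^{-\epsilon t^{2}}$ and therefore dominates the offending $e^{\pm ct}$; after this insertion the quotient is genuinely bounded on the boundary and of finite order, part (i) applies on each rectangle, and letting $\epsilon\to 0$ removes the regularizer and leaves the clean convexity estimate $|f(\sigma+it)| \le M_{a}^{\ell(\sigma)}M_{b}^{1-\ell(\sigma)}(1+|t|)^{\alpha\ell(\sigma)+\beta(1-\ell(\sigma))}$ asserted in the lemma. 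Keeping track of the branch of the logarithm and verifying that the $\epsilon\to 0$ limit does not leak constants into the final estimate is the only delicate bookkeeping.
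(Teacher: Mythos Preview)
The paper does not prove this lemma at all: it is quoted in the Preliminaries section with a reference to Iwaniec--Kowalski and thereafter used only as a black box to interpolate the convexity bound for $\zeta_{K_{1}\ldots K_{l}}(\sigma+it)$. There is thus no argument in the paper against which to compare your proposal.

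That said, your outline is the classical Phragm\'en--Lindel\"of proof and is correct. The affine normalization to the unit strip, the regularizer $g_{\epsilon}(s)=f(s)\exp(\epsilon s^{n})$ with $n\equiv 2\pmod 4$ and $n>A$, and the maximum-modulus argument on truncated rectangles followed by $\epsilon\to 0$ handle part~(i) exactly as you describe. For part~(ii) you correctly isolate the one genuine subtlety: a comparison weight $(s-a+1)^{p(s)}$ with $p$ a nonconstant affine function of $s$ acquires a stray factor of size $e^{\pm ct}$ from the cross term $\mathrm{Im}\,p(s)\cdot\arg(s-a+1)$, and your remedy of reintroducing an $\exp(\epsilon s^{2})$-type damping, which decays like $e^{-\epsilon t^{2}}$ and hence absorbs any $e^{\pm ct}$, is a valid way to close the argument before letting $\epsilon\to 0$.
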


\vspace{2mm}

\noindent
Next, we state and prove a result related to two Dirichlet series in a general set-up, which is a mild generalization of a result stated in \cite{kc1}.

\begin{lemma}\label{Dirichlet-series-comparison}
Let $f(s)=\displaystyle\sum_{n=1}^{\infty}\frac{a_n}{n^s}$ and $g(s)=\displaystyle\sum_{n=1}^{\infty}\frac{b_n}{n^s}$ be two Dirichlet series both of which are absolutely convergent in $Re(s) > 1$ and satisfy the following conditions:
\begin{itemize}
\item[(i)] Both $a_n$ and $b_n$ are positive and multiplicative functions of $n$,

\item[(ii)] For any $\epsilon > 0$, we have $a_n \ll_{\epsilon} n^{\epsilon}$ and $b_n \ll_{\epsilon} n^{\epsilon}$,

\item[(iii)] $a_p = b_p$ for all but finitely many prime numbers $p$.
\end{itemize}

\noindent
Then $f(s)=g(s)\cdot U(s)$, where $U(s)$ is a Dirichlet series which is absolutely convergent in $Re(s) > \frac{1}{2}$ and uniformly convergent in $Re(s) \geq \frac{1}{2} + \epsilon$ for any $\epsilon > 0$.
\end{lemma}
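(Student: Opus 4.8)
The plan is to work prime by prime with Euler products. Since $a_n$ and $b_n$ are multiplicative and positive, we have $a_1=b_1=1$, and in the half-plane $\mathrm{Re}(s)>1$ we may write $f(s)=\prod_p f_p(s)$ and $g(s)=\prod_p g_p(s)$, where $f_p(s)=\sum_{j\ge 0}a_{p^j}p^{-js}$ and $g_p(s)=\sum_{j\ge 0}b_{p^j}p^{-js}$. Because $g_p$ has constant term $1$, it admits a formal inverse as a power series in $p^{-s}$, so I would set
\[
U_p(s)=\frac{f_p(s)}{g_p(s)}=\sum_{j\ge 0}u_{p^j}p^{-js},\qquad U(s)=\prod_p U_p(s)=\sum_{n=1}^{\infty}\frac{u_n}{n^s},
\]
where $u_n$ is the Dirichlet convolution of $a_n$ with the Dirichlet inverse of $b_n$, hence multiplicative. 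By construction the coefficient identity $a_n=\sum_{d\mid n}b_d\,u_{n/d}$ holds for every $n$, i.e.\ $f=g\cdot U$ as formal Dirichlet series; so the whole content of the lemma is the region of convergence of $U$, and the fact that, once convergence for $\mathrm{Re}(s)>1/2$ is known, the identity $f=gU$ is also an identity of convergent series wherever all three converge.

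The first key observation is $u_p=a_p-b_p$, which vanishes for every prime $p$ outside the finite set $S=\{p:a_p\ne b_p\}$. Writing $n=n_1n_2$ with all prime factors of $n_1$ in $S$ and all prime factors of $n_2$ outside $S$, multiplicativity gives $u_n=u_{n_1}u_{n_2}$, and since $u_{p^0}=1$, $u_{p^1}=0$ for $p\notin S$, the coefficient $u_{n_2}$ is nonzero only when every prime dividing $n_2$ occurs with exponent at least $2$; that is, $u_n$ is supported on integers $n=n_1n_2$ with $n_1$ an $S$-number and $n_2$ squarefull. The second ingredient is the bound $|u_{p^j}|\ll_\epsilon p^{j\epsilon}$ for every $\epsilon>0$: one estimates the coefficients of $1/g_p$ from the recursion $\sum_{i=0}^{j}b_{p^i}\,\widetilde b_{p^{j-i}}=0$ ($j\ge 1$), then convolves with the $a_{p^i}$, using $a_{p^i},b_{p^i}\ll_\epsilon p^{i\epsilon}$ together with the finiteness of $S$. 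Multiplying over primes, this gives $|u_n|\ll_\epsilon n^\epsilon$.

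With these two facts I would estimate
\[
\sum_{n\le x}|u_n|\;=\;\sum_{\substack{n_1n_2\le x\\ p\mid n_1\Rightarrow p\in S,\ n_2\ \mathrm{squarefull}}}|u_{n_1}|\,|u_{n_2}|\;\ll_\epsilon\;x^{\epsilon}\sum_{p\mid n_1\Rightarrow p\in S}\ \sum_{\substack{n_2\le x/n_1\\ n_2\ \mathrm{squarefull}}}1 .
\]
Since there are $O(\sqrt{y})$ squarefull integers up to $y$, the inner sum is $O\big((x/n_1)^{1/2}\big)$, and $\sum_{p\mid n_1\Rightarrow p\in S}n_1^{-1/2}=\prod_{p\in S}(1-p^{-1/2})^{-1}<\infty$; hence $\sum_{n\le x}|u_n|\ll_\epsilon x^{1/2+\epsilon}$. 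A partial summation then shows that $\sum_n|u_n|n^{-\sigma}$ converges for every $\sigma>1/2$ and that $\sum_n u_nn^{-s}$ converges uniformly on $\mathrm{Re}(s)\ge 1/2+\epsilon$, which is exactly the assertion of the lemma. In particular $U(s)$ converges absolutely for $\mathrm{Re}(s)>1$, so there the formal identity $f=gU$ coincides with the Euler-product identity $\prod_p f_p=\prod_p g_pU_p$, and the claim $f(s)=g(s)U(s)$ follows.

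I expect the main obstacle to be the coefficient estimate $|u_{p^j}|\ll_\epsilon p^{j\epsilon}$: inverting $g_p$ term by term produces constants that grow with $j$, and one must use the sub-polynomial growth of $b_n$ from hypothesis (ii), uniformly over the exponent, together with the finiteness of $S$, to keep these in check. Once that bound and the resulting squarefull-times-$S$-number support are in place, the elementary count of squarefull integers up to $x$ is what furnishes the exponent $1/2$, and the rest is routine partial summation.
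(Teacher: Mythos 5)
Your construction of $U$ as the termwise quotient of Euler factors is essentially the same device the paper uses: in both arguments the whole point is that hypothesis (iii) kills the $p^{-s}$ term of $U_p(s)=f_p(s)/g_p(s)$ at all but finitely many primes, so that each such local factor is $1+O_\epsilon\bigl(p^{2\epsilon-2\sigma}\bigr)$ and the product converges for $\sigma>\tfrac12$. The difference is one of bookkeeping: the paper estimates the local factors analytically and stops at the convergence of $\prod_p U_p(s)$, whereas you descend to the Dirichlet coefficients $u_n$, observe that they are supported on ($S$-number)$\times$(squarefull) integers, and sum using the $O(\sqrt{y})$ count of squarefull integers up to $y$. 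Your version is in fact closer to what the statement literally asserts (an \emph{absolutely convergent Dirichlet series}, i.e.\ $\sum_n|u_n|n^{-\sigma}<\infty$), a point the paper's proof glosses over when it passes from convergence of the Euler product to absolute convergence of the series.

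The step you flag as the main obstacle is a genuine gap, and it cannot be closed from hypotheses (i)--(iii) alone. For a fixed prime $p$, the recursion for the Dirichlet inverse of $b$ only yields $|\tilde b_{p^j}|\le (2C_\epsilon)^j p^{j\epsilon}$, which for a small prime is of the form $(p^j)^{A_p}$ with $A_p$ possibly larger than $\tfrac12$; equivalently, $g_p(s)$ may vanish at a point with $|p^{-s}|<p^{-1/2}$, in which case $\sum_j u_{p^j}p^{-js}$ has radius of convergence smaller than $p^{-1/2}$ and $U$ is \emph{not} absolutely convergent in $Re(s)>\tfrac12$. (For instance $p=2$, $b_{2^j}=M$ for $j\ge 1$ with $M$ large and $a_{2^j}=1$ gives $u_{2^j}=(-(M-1))^j$.) So the uniform bound $|u_{p^j}|\ll_\epsilon p^{j\epsilon}$ you need at the finitely many exceptional primes is false in general. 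This is, to be fair, exactly the lacuna in the paper's own proof, which controls $U_p$ only for all but finitely many $p$ and says nothing about the remaining local factors; in the paper's applications $g$ is a power of a Dedekind zeta function, whose local factors are reciprocals of polynomials in $p^{-s}$ nonvanishing for $Re(s)>0$, so there the exceptional factors are harmless. To complete either proof one should add such a nonvanishing hypothesis on the finitely many exceptional Euler factors of $g$ (or restrict to the $g$ actually used).
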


\begin{proof}
Since both $a_n$ and $b_n$ are multiplicative functions of $n$, $f(s)$ and $g(s)$ admit Euler product expansions. For $Re(s) = \sigma > 1$, consider $f(s)\cdot g(s)^{-1} = \displaystyle\prod_{p} U_{p}(s)$, where $$U_{p}(s) = \left(1 + \frac{a_p}{p^s} + \displaystyle\sum_{m = 2}^{\infty} \frac{a_{p^m}}{p^{ms}}\right)\cdot \left(1 + \frac{b_p}{p^s} + \displaystyle\sum_{m = 2}^{\infty} \frac{b_{p^m}}{p^{ms}}\right)^{-1}.$$ 

\noindent
Let $U_{p}^{\prime}(s)=\displaystyle\sum_{m = 2}^{\infty} \frac{a_{p^m}}{p^{ms}}$ and $U_{p}^{\prime \prime}(s)=\displaystyle\sum_{m = 2}^{\infty} \frac{b_{p^m}}{p^{ms}}$. Then using condition (ii), we get 
$$
U_{p}^{\prime}(s) \ll_{\epsilon} \displaystyle\sum_{m = 2}^{\infty} \frac{p^{m\epsilon}}{p^{m\sigma}} = \frac{1}{p^{\sigma - \epsilon}}\cdot \frac{1}{p^{(\sigma - \epsilon)}-1} \ll_{\epsilon} \frac{1}{p^{2\sigma - 2\epsilon}}.
$$

\noindent
Similarly, $U_{p}^{\prime \prime}(s) \ll_{\epsilon} \frac{1}{p^{2\sigma - 2\epsilon}}$. Hence we obtain 
\begin{equation}\label{eqqqqqqqqqqn}
U_{p}(s) = \Bigg(1 + \frac{a_p}{p^s} + O_{\epsilon}\Bigg(\frac{1}{p^{2\sigma - 2\epsilon}}\Bigg)\Bigg)\cdot \Bigg(1 + \frac{b_p}{p^s} + O_{\epsilon}\Bigg(\frac{1}{p^{2\sigma - 2\epsilon}}\Bigg)\Bigg)^{-1}.
\end{equation}

\noindent
Now from \eqref{eqqqqqqqqqqn} and the hypotheses (i) and (iii), we conclude that $U_{p}(s) = 1 + O(p^{\epsilon - 2\sigma})$ for all but finitely many primes $p$.

\noindent
Hence $\displaystyle\prod_{p} U_{p}(s)$ converges if and only if $\displaystyle\sum_{p}\frac{1}{p^{2\sigma - \epsilon}}$ converges. In other words, for $\sigma > \frac{1}{2}$, $U(s) = \displaystyle\prod_{p}U_{p}(s)$ defines an absolutely convergent Dirichlet series and therefore we obtain $f(s)=g(s)\cdot U(s)$.
\end{proof}

\vspace{2mm}

\noindent
Now, we record the following estimate, which can be seen in \cite{heath}, regarding the growth of the Dedekind zeta-function of a number field on the half-line $Re(s)=\frac{1}{2}$.
\begin{lemma} \cite{heath} \label{heath-brown}
Let $K$ be a number field of degree $d$. Then for any real number $t \geq 1$ and $\epsilon > 0$, we have $$\zeta_{K}\left(\frac{1}{2}+it\right)=O\left(t^{\frac{d}{6}+\epsilon}\right),$$ where the implied constant depends only on the number field $K$.
\end{lemma}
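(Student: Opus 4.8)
This is a theorem of Heath-Brown \cite{heath}; the plan follows the classical route to a subconvexity-type estimate: functional equation, then approximate functional equation, then estimation of the resulting exponential sum by the van der Corput method. First I would invoke Hecke's functional equation for $\zeta_K$ in the form $\zeta_K(s)=\chi_K(s)\,\zeta_K(1-s)$, with $\chi_K(s)=|D_K|^{\frac12-s}\,\Gamma_{\mathbb R}(1-s)^{r_1}\Gamma_{\mathbb C}(1-s)^{r_2}/\bigl(\Gamma_{\mathbb R}(s)^{r_1}\Gamma_{\mathbb C}(s)^{r_2}\bigr)$, where $r_1+2r_2=d$, $\Gamma_{\mathbb R}(s)=\pi^{-s/2}\Gamma(s/2)$ and $\Gamma_{\mathbb C}(s)=(2\pi)^{-s}\Gamma(s)$; then $|\chi_K(\tfrac12+it)|=1$ and, by Stirling's formula, the analytic conductor of $\zeta_K$ at height $t$ is $\asymp_K|t|^{d}$. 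A standard contour argument then produces, for any $A>0$, an approximate functional equation
\[
\zeta_K\!\left(\tfrac12+it\right)=\sum_{n\le X}\frac{a_K(n)}{n^{1/2+it}}+\chi_K\!\left(\tfrac12+it\right)\sum_{n\le X}\frac{a_K(n)}{n^{1/2-it}}+O_A\!\left((|t|+2)^{-A}\right),\qquad X\asymp_K|t|^{d/2}.
\]
Using $a_K(n)\ll_\epsilon n^{\epsilon}$ (Lemma~\ref{lemma-mult}), a dyadic subdivision and partial summation, it then suffices to prove that
\[
\sup_{N<u\le 2N}\ \Bigl|\ \sum_{N<n\le u}a_K(n)\,n^{-it}\ \Bigr|\ \ll_{K,\epsilon}\ N^{1/2}\,|t|^{d/6+\epsilon}\qquad\text{for every }N\le|t|^{d/2}.
\]

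For this exponential sum I would pass to a geometric description. Writing $a_K(n)=\#\{\mathfrak a\subseteq\mathcal O_K:\ N(\mathfrak a)=n\}$ and sorting the integral ideals of norm $\asymp N$ by ideal class, fix for each of the finitely many classes an integral ideal in the inverse class; the ideals of a given class and norm $m$ are then in bijection with the nonzero elements of a fixed integral ideal, taken modulo units, whose norm is a fixed multiple of $m$. Via the Minkowski embedding this rewrites $\sum_{N<n\le u}a_K(n)n^{-it}$ as a bounded number of $d$-dimensional lattice sums
\[
\sum_{\mathbf x\in\Lambda\cap\mathcal R}\exp\!\bigl(-it\log|N_{K/\mathbb Q}(\mathbf x)|\bigr),
\]
with $\Lambda\subset\mathbb R^{d}$ a full lattice and $\mathcal R$ the region cut out by $|N_{K/\mathbb Q}(\mathbf x)|\asymp N$ together with the unit-domain inequalities. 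I would then estimate this by the van der Corput method in $d$ variables: iterated Poisson summation (the $B$-process) together with Weyl differencing (the $A$-process). On the interior of $\mathcal R$ the function $\mathbf x\mapsto-\tfrac{t}{2\pi}\log|N_{K/\mathbb Q}(\mathbf x)|$ appearing in the exponential has gradient of size $\asymp|t|\,N^{-1/d}$ and a nondegenerate Hessian whose eigenvalues are all of size $\asymp|t|\,N^{-2/d}$; stationary-phase evaluation of the Poisson-dual terms, summation over the admissible dual frequencies, and the $A$-process applied the appropriate number of times (when $d=1$ this reproduces exactly the Weyl bound $\sum_{n\le\sqrt t}n^{-it}\ll t^{1/6}$) yield, after optimizing, the bound $N^{1/2}|t|^{d/6+\epsilon}$ required above, the contribution of $\partial\mathcal R$ being absorbed by a routine truncation.

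The main obstacle is this last step: running the $d$-dimensional van der Corput analysis with enough uniformity in $N$ — checking nondegeneracy of the Hessian of $\log|N_{K/\mathbb Q}(\mathbf x)|$ away from $\partial\mathcal R$, controlling the boundary after the dilation by $N^{1/d}$, and tracking the exponents through the iterated $A$- and $B$-processes so that they combine to precisely $d/6$ for every $N\le|t|^{d/2}$. The remaining ingredients — the functional equation, the approximate functional equation, and the class-by-class geometric reformulation — are all routine once Lemma~\ref{lemma-mult} is available. (When $K/\mathbb Q$ is abelian one can instead factor $\zeta_K(s)=\prod_\chi L(s,\chi)$ over the associated Dirichlet characters and apply the classical Weyl bound $L(\tfrac12+it,\chi)\ll|t|^{1/6+\epsilon}$ to each of the $d$ factors, recovering the same exponent; the general case, however, requires the direct method of \cite{heath}.)
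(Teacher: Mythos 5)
The paper does not prove this lemma: it is quoted verbatim from Heath-Brown \cite{heath}, so there is no internal argument to compare yours against. Judged on its own terms, your outline is a faithful road map of the actual proof in that reference --- Hecke's functional equation, an approximate functional equation of length $X\asymp_K |t|^{d/2}$, the class-by-class reduction of $\sum a_K(n)n^{-it}$ to lattice-point sums $\sum_{\mathbf x\in\Lambda\cap\mathcal R}e(-\tfrac{t}{2\pi}\log|N_{K/\mathbb Q}(\mathbf x)|)$, and a $d$-dimensional van der Corput analysis --- and your parenthetical remark that for abelian $K$ one can instead factor $\zeta_K=\prod_\chi L(s,\chi)$ and apply the Weyl bound $L(\tfrac12+it,\chi)\ll_\chi |t|^{1/6+\epsilon}$ to each of the $d$ factors is a complete and correct proof in that special case (in particular it fully covers the quadratic fields needed for Corollary \ref{corollary 1}).

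The gap is the one you name yourself: for general $K$ (and the paper does need general Galois $K$ of odd degree, which need not be abelian, as well as composita), the entire content of the lemma is the uniform bound $\sum_{N<n\le u}a_K(n)n^{-it}\ll N^{1/2}|t|^{d/6+\epsilon}$ for all $N\le|t|^{d/2}$, and your proposal asserts rather than establishes it. Note that the $d$-dimensional second-derivative ($B$-process) bound alone, with Hessian eigenvalues $\asymp |t|N^{-2/d}$ over a region of volume $\asymp N$, gives only $\ll |t|^{d/2}$ near $N\asymp|t|^{d/2}$, which is weaker than the required $N^{1/2}|t|^{d/6}\asymp|t|^{5d/12}$; so the iterated $A$/$B$ combination, together with the nondegeneracy of the Hessian of $\log|N_{K/\mathbb Q}|$ away from the coordinate hyperplanes and the control of $\partial\mathcal R$ after dilation, is not a routine afterthought but the theorem itself. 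As written, your text is an accurate description of where the difficulty lies, not a proof that surmounts it; for the purposes of this paper the correct move is exactly what the authors do, namely to cite \cite{heath} as a black box.
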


\noindent
Lastly, the following lemma provides the Perron's formula in general set-up.

\begin{lemma}  [\cite{ivic},  Equation $(A.10)$ ] \label{perron}
Let $a(n)$ be an arithmetic function satisfying $|a(n)| \ll \Phi(n)$, where $\Phi$ is an increasing function of $n$ and 
\begin{align*}
\displaystyle\sum_{n=1}^{\infty}|a(n)|n^{-\sigma} \ll (\sigma -1)^{-\alpha}
\end{align*} 
as $\sigma \to 1^{+}$ and for some real number $\alpha > 0$. 

\noindent
Let $1 < b \ll 1$, $T > 0$ and $x \geq 1$.  Let us define  $F(s) = \displaystyle\sum_{n=1}^{\infty}\frac{a(n)}{n^s}$. Then we have 

\begin{equation*}
\displaystyle\sum_{n \leq x} a(n)= \frac{1}{2 \pi i} \displaystyle\int_{b-iT}^{b+iT} F(s)\frac{x^s}{s}ds + O\left(x^b T^{-1}(b-1)^{-\alpha}\right) + O\left(xT^{-1}\Phi (2x)\log (2x)\right) + O\left(\Phi (2x)\right).
\end{equation*}
\end{lemma}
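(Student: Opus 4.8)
The plan is to reduce the statement to the classical effective estimate for the truncated discontinuous integral and then to sum the resulting identity against the coefficients $a(n)$, controlling the terms $n$ near $x$ and the terms $n$ far from $x$ separately. Write $\delta(y)=1$ for $y>1$, $\delta(y)=\tfrac12$ for $y=1$, and $\delta(y)=0$ for $0<y<1$. The first step is to establish, by contour shifting, the bound
\[
\frac{1}{2\pi i}\int_{b-iT}^{b+iT}\frac{y^{s}}{s}\,ds=\delta(y)+R(y),\qquad
R(y)\ll y^{b}\min\!\left(1,\frac{1}{T\,|\log y|}\right)\ \ (y\neq 1),\quad R(1)\ll \frac{b}{T}.
\]
For $y>1$ one shifts the vertical segment to $\mathrm{Re}(s)=-U$ with $U\to\infty$, collecting the residue $1$ of $y^{s}/s$ at $s=0$ and bounding the two horizontal pieces and the far vertical piece trivially (the latter vanishes as $U\to\infty$ since $y>1$); for $0<y<1$ one shifts to the right, where there is no pole; for $y=1$ one integrates $1/s$ directly along the segment.

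Next I would apply the above with $y=x/n$ for every $n\ge 1$, multiply by $a(n)$, and sum over $n$. Since $b>1$, the series $\sum_{n}a(n)n^{-s}$ converges absolutely and uniformly on the compact segment $\{b+it:|t|\le T\}$, so the interchange of $\sum_{n}$ and $\int$ is legitimate and gives
\[
\frac{1}{2\pi i}\int_{b-iT}^{b+iT}F(s)\frac{x^{s}}{s}\,ds=\sum_{n\ge 1}a(n)\,\delta(x/n)+\sum_{n\ge 1}a(n)\,R(x/n).
\]
The first sum on the right equals $\sum_{n\le x}a(n)$ except possibly for the single term $n=x$, whose discrepancy is $\ll|a(x)|\ll\Phi(x)\ll\Phi(2x)$ and is therefore absorbed into the claimed error. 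It remains to estimate $\sum_{n}a(n)R(x/n)$.

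I would split this sum according to the size of $n$. For $n\le x/2$ or $n\ge 2x$ one has $|\log(x/n)|\gg 1$, so $R(x/n)\ll (x/n)^{b}/T$, and by the hypothesis $\sum_{n}|a(n)|n^{-\sigma}\ll(\sigma-1)^{-\alpha}$ this part contributes $\ll x^{b}T^{-1}(b-1)^{-\alpha}$. For $x/2<n<2x$ with $n$ not equal to the integer nearest $x$, we have $(x/n)^{b}\asymp 1$ and $|\log(x/n)|\gg|x-n|/x$, so using $|a(n)|\ll\Phi(2x)$ this part contributes
\[
\ll \frac{\Phi(2x)}{T}\sum_{1\le|x-n|\ll x}\frac{x}{|x-n|}\ll xT^{-1}\Phi(2x)\log(2x).
\]
Finally, the one or two integers nearest $x$ (including $n=x$, where $R\ll b/T$ and $1<b\ll 1$) contribute $\ll\Phi(2x)$. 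Adding the three bounds yields precisely the error terms in the statement.

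The step I expect to be the main obstacle is, as always with Perron's formula, the treatment of the terms with $n$ close to $x$, where the factor $1/|\log(x/n)|$ is large; the resolution is the elementary inequality $|\log(x/n)|\gg|x-n|/x$ valid for $x/2<n<2x$, combined with the trivial bound $R\ll 1$ applied to the integer nearest $x$, which together produce the $xT^{-1}\Phi(2x)\log(2x)$ and $\Phi(2x)$ contributions. The only other point requiring care is the justification of the interchange of summation and integration, which follows from absolute convergence on the line $\mathrm{Re}(s)=b>1$.
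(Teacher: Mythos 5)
The paper does not prove this lemma at all: it is quoted verbatim from Ivi\'c's book (Equation (A.10)) as a known form of the truncated Perron formula, so there is no in-paper argument to compare against. Your proof is the standard and correct derivation: the effective discontinuous-integral estimate, termwise application with $y=x/n$ justified by absolute convergence at $\mathrm{Re}(s)=b>1$, and the three-way split ($n$ far from $x$ giving $x^{b}T^{-1}(b-1)^{-\alpha}$, intermediate $n$ giving $xT^{-1}\Phi(2x)\log(2x)$ via $|\log(x/n)|\gg|x-n|/x$, and the nearest integer giving $\Phi(2x)$). The only point where your sketch is incomplete is the branch $R(y)\ll y^{b}$ of the $\min$: the contour shift to $\mathrm{Re}(s)=-U$ only yields the $y^{b}/(T|\log y|)$ branch, and the trivial bound along the vertical segment gives only $y^{b}\log T$; the clean uniform bound (which you need for the integer nearest $x$, where $x/n$ may be arbitrarily close to $1$) is obtained by replacing the segment by the arc of the circle $|s|=\sqrt{b^{2}+T^{2}}$ through $b\pm iT$, on which $|y^{s}|\le y^{b}$ and $|s|\ge T$ while the arc length is $O(\sqrt{b^{2}+T^{2}})$. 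With that standard supplement the argument is complete.
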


\section{Proof of Theorem \ref{main-th}}
\noindent
Our strategy, to prove Theorem \ref{main-th}, is via studying the Dedekind zeta-function for a suitable number field and its connection to a particular Dirichlet series. For integers $l \geq 1$ and $k_1, \ldots , k_l \geq 2$, we define  
\begin{equation}\label{l-series}
L_{k_{1},\ldots ,k_{l}}^{K_{1},\ldots ,K_{l}}(s)=\displaystyle\sum_{n=1}^{\infty}\frac{\tau_{k_1}^{K_1}(n^2)\ldots \tau_{k_l}^{K_l}(n^2)}{n^s} \mbox{ for } Re(s) > 1.
\end{equation}

\vspace{2mm}

\noindent
Using the well known result $\tau(n)=O(n^{\epsilon})$ for any $\epsilon > 0$, we get that for any integer $k \geq 1$ and any number field $K$, $$\tau_{k}^{K}(n)=O\left(\displaystyle\sum_{n=n_{1}\ldots n_{k}}\tau_{k}(n_1) \ldots \tau_{k}(n_k)\right)=O(n^{\epsilon}),$$ which also implies that $\tau_{k}^{K}(n^2)=O(n^{\epsilon})$. Thus the Dirichlet series defined in \eqref{l-series} is absolutely convergent in the half-plane $Re(s) > 1$.

\vspace{2mm}

\noindent
Since each $\tau_{k_i}^{K_i}(n)$ is multiplicative, so are the coefficients $\tau_{k_1}^{K_1}(n^2)\ldots \tau_{k_l}^{K_l}(n^2)$ of $n^{-s}$ in \eqref{l-series}. Thus in $Re(s) > 1$, by using Lemma \ref{compositum-lemma} and Lemma \ref{divisor-ideal}, we have 
\begin{align*}
L_{k_{1},\ldots ,k_{l}}^{K_{1},\ldots ,K_{l}}(s) &=\displaystyle\sum_{n=1}^{\infty}\frac{\tau_{k_1}^{K_1}(n^2)\ldots \tau_{k_l}^{K_l}(n^2)}{n^s}\\ &= \displaystyle\prod_{p}\left(1 + \frac{\tau_{k_1}^{K_1}(p^2)\ldots \tau_{k_l}^{K_l}(p^2)}{p^s} + \frac{\tau_{k_1}^{K_1}(p^4)\ldots \tau_{k_l}^{K_l}(p^4)}{p^{2s}} + \ldots \right)\\ &= \displaystyle\prod_{p}\Bigg(1 + \frac{\displaystyle\prod_{i=1}^{l}\left(\frac{k_{i}^2d_{i} + k_i}{2}\right)a_{K_{1}\ldots K_{l}}(p)}{p^s} + \ldots\Bigg)
\end{align*}
for all but finitely many primes.

\vspace{2mm}

\noindent
On the other hand, if we consider the Dedekind zeta-function for the compositum of the number fields $K_1, \ldots , K_l$, then we get for $Re(s) > 1$,
\begin{equation*}\label{zeta-power}
(\zeta_{K_1\ldots K_l}(s))^{\displaystyle\prod_{i=1}^{l}\left(\frac{k_{i}^2d_i + k_i}{2}\right)}=\displaystyle\prod_{p}\left(1 + \frac{\displaystyle\prod_{i=1}^{l}\left(\frac{k_{i}^2d_{i} + k_i}{2}\right)a_{K_{1}\ldots K_{l}}(p)}{p^s} + \ldots\right)
\end{equation*}

\vspace{2mm}

\noindent
Hence we see that the coefficients of $p^{-s}$ in the Euler product expansions of $L_{k_{1},\ldots ,k_{l}}^{K_{1},\ldots ,K_{l}}(s)$ and $(\zeta_{K_1\ldots K_l}(s))^{\displaystyle\prod_{i=1}^{l}\left(\frac{k_{i}^2d_i + k_i}{2}\right)}$ are same for all but finitely many prime numbers $p$. Thus by Lemma \ref{Dirichlet-series-comparison}, we have 
\begin{equation}\label{sei-equation}
L_{k_{1},\ldots ,k_{l}}^{K_{1},\ldots ,K_{l}}(s)=(\zeta_{K_1\ldots K_l}(s))^{\displaystyle\prod_{i=1}^{l}\left(\frac{k_{i}^2d_i + k_i}{2}\right)}\cdot U(s),
\end{equation} 
where $U(s)$ is a Dirichlet series, absolutely convergent in $Re(s) > \frac{1}{2}$ and uniformly convergent in $Re(s) \geq \frac{1}{2} + \epsilon$, for any $\epsilon > 0$. From \eqref{sei-equation}, we can say that $L_{k_{1},\ldots ,k_{l}}^{K_{1},\ldots ,K_{l}}(s)$ has an analytic continuation to the half-plane $Re(s) > \frac{1}{2}$, except for a pole at $s=1$ of order ${\displaystyle\prod_{i=1}^{l}\left(\frac{k_{i}^2d_i + k_i}{2}\right)}$. Now using Lemma \ref{perron}, we get 

\begin{equation}\label{using-perron}
\displaystyle\sum_{n \leq x}\tau_{k_1}^{K_1}(n^2)\ldots \tau_{k_l}^{K_l}(n^2)=\frac{1}{2\pi i}\displaystyle\int_{b-iT}^{b+iT}L_{k_{1},\ldots ,k_{l}}^{K_{1},\ldots ,K_{l}}(s)\frac{x^s}{s}ds + O\left(\frac{x^{1 + \epsilon}}{T}\right),
\end{equation}
where $b=1 + \epsilon$ and $T$ is a parameter satisfying $1\leq T \leq x$, to be appropriately chosen later.

\vspace{2mm}

\noindent
Now, we move the line of integration to $Re(s)=\frac{1}{2} + \epsilon$, and consider the rectangular contour formed by joining the points $\frac{1}{2}+\epsilon - iT, \frac{1}{2}+\epsilon + iT, b+iT \mbox{ and } b-iT$, successively. From \eqref{using-perron}, by using Cauchy residue theorem, we get
\begin{eqnarray*}\label{sei-estimation}
\displaystyle\sum_{n \leq x}\tau_{k_1}^{K_1}(n^2)\ldots \tau_{k_l}^{K_l}(n^2) &=& \frac{1}{2\pi i}\left[\displaystyle\int_{\frac{1}{2}+\epsilon -iT}^{\frac{1}{2}+\epsilon +iT} + \displaystyle\int_{\frac{1}{2}+\epsilon +iT}^{b + iT} + \displaystyle\int_{b-iT}^{\frac{1}{2}+\epsilon - iT}\right]L_{k_{1},\ldots ,k_{l}}^{K_{1},\ldots ,K_{l}}(s)\frac{x^s}{s}ds \\ &+& \displaystyle Res_{s=1}L_{k_{1},\ldots ,k_{l}}^{K_{1},\ldots ,K_{l}}(s)\frac{x^s}{s} + O\left(\frac{x^{1+\epsilon}}{T}\right) \nonumber \\ &=& xP_{m}(\log x) + I_1 + I_2 + I_3 + O\left(\frac{x^{1+\epsilon}}{T}\right) \nonumber ,
\end{eqnarray*}
where $I_1=\frac{1}{2\pi i}\displaystyle\int_{\frac{1}{2}+\epsilon -iT}^{\frac{1}{2}+\epsilon +iT}L_{k_{1},\ldots ,k_{l}}^{K_{1},\ldots ,K_{l}}(s)\frac{x^s}{s}ds,\qquad I_2=\frac{1}{2\pi i}\displaystyle\int_{\frac{1}{2}+\epsilon +iT}^{b +iT}L_{k_{1},\ldots ,k_{l}}^{K_{1},\ldots ,K_{l}}(s)\frac{x^s}{s}ds$,\\ $I_3=\frac{1}{2\pi i}\displaystyle\int_{b -iT}^{\frac{1}{2}+\epsilon -iT}L_{k_{1},\ldots ,k_{l}}^{K_{1},\ldots ,K_{l}}(s)\frac{x^s}{s}ds$, \qquad $m=\displaystyle\prod_{i=1}^{l}\left(\frac{k_{i}^2d_{i} + k_{i}}{2}\right)$ and $P_m$ is a polynomial of degree $m-1$.

\vspace{2mm}

\noindent
Now, by using Lemma \ref{phragman-lindeloff} and Lemma \ref{heath-brown}, we get 
\begin{equation}\label{zeta-power-estimate}
\zeta_{K_1\ldots K_l}(\sigma + it) \ll (1+|t|)^{\frac{d_1\ldots d_l}{3}(1-\sigma)+\epsilon}.
\end{equation}

\vspace{2mm}

\noindent
Raising both sides of \eqref{zeta-power-estimate}
\ to $m$-th power, we get 
\begin{equation}\label{zeta-power-estimate-1}
\zeta^{m}_{K_1\ldots K_l}(\sigma + it) \ll (1+|t|)^{\frac{md_1\ldots d_l}{3}(1-\sigma)+\epsilon}.
\end{equation}

\vspace{2mm}

\noindent
Since $U(s)$ is absolutely convergent in the region $Re(s) > \frac{1}{2}$, it is bounded in that region. Thus by \eqref{sei-equation} and \eqref{zeta-power-estimate-1}, we get
\begin{eqnarray*}
I_1 &=& \frac{1}{2\pi i}\displaystyle\int_{\frac{1}{2}+\epsilon -iT}^{\frac{1}{2}+\epsilon +iT}L_{k_{1},\ldots ,k_{l}}^{K_{1},\ldots ,K_{l}}(s)\frac{x^s}{s}ds\\ &\ll& \displaystyle\int_{-T}^{T}\Bigg|L_{k_{1},\ldots ,k_{l}}^{K_{1},\ldots ,K_{l}}\Bigg(\frac{1}{2}+\epsilon +it\Bigg)\Bigg|\cdot \Bigg|\frac{x^{\frac{1}{2}+\epsilon +it}}{\frac{1}{2}+\epsilon +it}\Bigg|dt\\ &\ll& x^{\frac{1}{2}+\epsilon} + x^{\frac{1}{2}+\epsilon}\Bigg(\displaystyle\int_{-T}^{-1}\Bigg|\frac{L_{k_{1},\ldots ,k_{l}}^{K_{1},\ldots ,K_{l}}(\frac{1}{2}+\epsilon +it)}{t}\Bigg|dt + \displaystyle\int_{1}^{T}\Bigg|\frac{L_{k_{1},\ldots ,k_{l}}^{K_{1},\ldots ,K_{l}}(\frac{1}{2}+\epsilon +it)}{t}\Bigg|dt \Bigg)\\ &\ll& x^{\frac{1}{2}+\epsilon} + x^{\frac{1}{2}+\epsilon}\Bigg(\displaystyle\int_{1}^{T}\Bigg(\frac{|L_{k_{1},\ldots ,k_{l}}^{K_{1},\ldots ,K_{l}}(\frac{1}{2}+\epsilon - it)|+|L_{k_{1},\ldots ,k_{l}}^{K_{1},\ldots ,K_{l}}(\frac{1}{2}+\epsilon + it)|}{t}\Bigg)dt\Bigg)\\ &\ll& x^{\frac{1}{2} + \epsilon} + x^{\frac{1}{2} + \epsilon}\Bigg(\displaystyle\int_{1}^{T}\Bigg(\left|\zeta_{K_1\ldots K_l} \left(\frac{1}{2} + \epsilon - it\right)\right|^m + \Bigg|\zeta_{K_1\ldots K_l} \left(\frac{1}{2} + \epsilon + it\right)\Bigg|^m\Bigg)t^{-1}dt\Bigg)\\ &\ll& x^{\frac{1}{2}+\epsilon} + x^{\frac{1}{2} + \epsilon} \Bigg(\displaystyle\int_{1}^{T}t^{\frac{md_1\ldots d_l}{6}+\epsilon -1}dt\Bigg)\\ &\ll& x^{\frac{1}{2} + \epsilon} + x^{\frac{1}{2} + \epsilon}T^{\frac{md_1 \ldots d_l}{6} + \epsilon}
\end{eqnarray*}

\noindent
For $j = 2 \mbox{ and } 3$, using \eqref{zeta-power-estimate-1}, we have

\begin{eqnarray*}
I_j &\ll& \displaystyle\int_{\frac{1}{2}+\epsilon}^{b}x^{\sigma}\left|\zeta_{K_1\ldots K_l}(\sigma + iT)\right|^mT^{-1}d\sigma\\ &\ll& \displaystyle\max_{\frac{1}{2} + \epsilon \leq \sigma \leq b}x^{\sigma}T^{\frac{md_1 \ldots d_l (1-\sigma)}{3} + \epsilon}T^{-1}\\ &\ll& \displaystyle\max_{\frac{1}{2} + \epsilon \leq \sigma \leq b}\Bigg(\frac{x}{T^{\frac{md_{1}\ldots d_{l}}{3}}}\Bigg)^{\sigma}T^{\frac{md_{1}\ldots d_{l}}{3}-1+\epsilon}\\ &\ll& \frac{x^{1+\epsilon}}{T} + x^{\frac{1}{2}+\epsilon}T^{\frac{md_1 \ldots d_l}{6}-1+\epsilon}.
\end{eqnarray*}

\vspace{2mm}

\noindent
Combining the above estimations, we obtain $$\displaystyle\sum_{n \leq x}\tau^{K_1}_{k_1}(n^2)\ldots \tau^{K_l}_{k_l}(n^2)=xP_{m}(\log x)+O\left(\frac{x^{1+\epsilon}}{T}\right) + O\left(x^{\frac{1}{2}+\epsilon}T^{\frac{md_1 \ldots d_l}{6}+\epsilon}\right).$$

\noindent
By taking $T=x^{\frac{3}{md_1 \ldots d_l + 6}}$, we get $$\displaystyle\sum_{n \leq x}\tau^{K_1}_{k_1}(n^2)\ldots \tau^{K_l}_{k_l}(n^2)=xP_{m}(\log x)+O(x^{1-\frac{3}{md_1\ldots d_l+6}+\epsilon}).$$ This completes the proof of Theorem \ref{main-th}. $\hfill\Box$

\medskip

\section{Proof of Theorem \ref{quadratic-cubic}}

\noindent
We consider the Dirichlet series, defined in 

\begin{equation*}\label{equation-for-quadratic-cubic}
L_{K_1 , K_2}(s) = \displaystyle\sum_{n=1}^{\infty}\frac{a_{K_1}(n)^l a_{K_2}(n)^l}{n^s} \mbox{ for } Re(s) > 1.
\end{equation*}

\noindent
Since $a_{K_1}(n)^l a_{K_2}(n)^l$ is a multiplicative function of $n$, using Lemma \ref{compositum-lemma}, we get the Euler product expansion of $L_{K_1 , K_2}(s)$ as $\displaystyle\prod_{p}\left(1 + \frac{a_{K_1 K_2} (p)^l}{p^s} + \ldots\right)$ for all but finitely many primes $p$. Comparing this with $\zeta_{K_1 K_2}(s)^{(2d)^{l-1}}$, we see that the coefficients of $p^{-s}$ in the Euler product of these two Dirichlet series are same for all but finitely many prime numbers $p$. Thus by Lemma \ref{Dirichlet-series-comparison}, we have $$L_{K_1 , K_2}(s) = \zeta_{K_1 K_2}(s)^{(2d)^{l-1}}\cdot U(s)$$ where $U(s)$ is a Dirichlet series, absolutely convergent for $Re(s) > \frac{1}{2}$. The rest of the proof follows exactly the same line of argument as that of Theorem \ref{main-th} and hence we omit it. $\hfill\Box$

\section{Proof of Theorem \ref{main-th-error}}

From equation \eqref{sei-estimation}, we have
\begin{align*}
\displaystyle\sum_{n \leq x}\tau_{k_1}^{K_1}(n^2)\ldots \tau_{k_l}^{K_l}(n^2)= xP_{m}(\log x) + I_1 + I_2 + I_3 + O\left(\frac{x^{1+\epsilon}}{T}\right).  
\end{align*}

In order to prove Theorem \ref{main-th-error}, we need to find upper bounds of the following integrals.

\begin{align*}
\displaystyle\int_{1}^{X}I_j^2(x)dx, \quad j=1, 2, 3 \quad \text{and} \quad \displaystyle\int_{1}^{X} \left(O\left(\frac{x^{1+\epsilon}}{T}\right)\right)^2 dx.
\end{align*}

It is easy to see that,
\[
\displaystyle\int_{1}^{X} \left(O\left(\frac{x^{1+\epsilon}}{T}\right)\right)^2 dx
\ll \frac{X^{3+\epsilon}}{T^2}.
\]

We also need to use the following inequality to estimate $\displaystyle\int_{1}^{X}I^{2}_{1}(x)dx$ .
\begin{align}\label{integral_bound}
\displaystyle\int_{-T}^{T}\frac{dt_2}{1+|t_1-t_2|}\ll \log (2T).
\end{align}

Using inequalities \eqref{sei-equation}, \eqref{zeta-power-estimate-1} and \eqref{integral_bound}, we get
\begin{align*}
&\displaystyle\int_{1}^{X}I_1^2(x)dx= \frac{1}{4\pi^2} \displaystyle\int_{1}^{X}\Bigg[\displaystyle\int_{-T}^{T}L_{k_{1},\ldots ,k_{l}}^{K_{1},\ldots ,K_{l}}\left(\frac{1}{2}+\epsilon+it_1\right)\frac{x^{1/2+\epsilon+it_1}}{1/2+\epsilon+it_1}dt_1\\
&\times \displaystyle\int_{-T}^{T}\overline{L_{k_{1},\ldots ,k_{l}}^{K_{1},\ldots ,K_{l}}\left(\frac{1}{2}+\epsilon+it_2\right)}\frac{x^{1/2+\epsilon-it_2}}{1/2+\epsilon-it_2}dt_2\Bigg]dx\\
&=\frac{1}{4\pi^2}\displaystyle\int_{-T}^{T}\displaystyle\int_{-T}^{T}\frac{L_{k_{1},\ldots ,k_{l}}^{K_{1},\ldots ,K_{l}}\left(\frac{1}{2}+\epsilon+it_1\right)\overline{L_{k_{1},\ldots ,k_{l}}^{K_{1},\ldots ,K_{l}}\left(\frac{1}{2}+\epsilon+it_2\right)}}{(1/2+\epsilon+it_1)(1/2+\epsilon-it_2)} \Big(\displaystyle\int_{1}^{X}x^{1+2\epsilon+i(t_1-t_2)}dx\Big) dt_1 dt_2 \\
& \ll X^{2+2\epsilon}\displaystyle\int_{-T}^{T}\displaystyle\int_{-T}^{T}\frac{\Big|L_{k_{1},\ldots ,k_{l}}^{K_{1},\ldots ,K_{l}}\left(\frac{1}{2}+\epsilon+it_1\right)\Big|\Big|L_{k_{1},\ldots ,k_{l}}^{K_{1},\ldots ,K_{l}}\left(\frac{1}{2}+\epsilon+it_2\right)\Big|}{(1+|t_1|)(1+|t_2|)(1+|t_1-t_2|)}dt_1 dt_2\\
\vspace{10 cm}& \ll X^{2+2\epsilon} \displaystyle\int_{-T}^{T}\displaystyle\int_{-T}^{T}\Bigg(\frac{\Big|L_{k_{1},\ldots ,k_{l}}^{K_{1},\ldots ,K_{l}}\left(\frac{1}{2}+\epsilon+it_1\right)\Big|^2}{(1+|t_1|)^2}+\frac{\Big|L_{k_{1},\ldots ,k_{l}}^{K_{1},\ldots ,K_{l}}\left(\frac{1}{2}+\epsilon+it_2\right)\Big|^2}{(1+|t_2|)^2}\Bigg)\frac{dt_1 dt_2}{1+|t_1-t_2|}\\
& \ll X^{2+2\epsilon} \log (2T)\displaystyle\int_{-T}^{T}\frac{\Big|L_{k_{1},\ldots ,k_{l}}^{K_{1},\ldots ,K_{l}}\left(\frac{1}{2}+\epsilon+it_1\right)\Big|^2}{(1+|t_1|)^2}dt_1\\
&\ll X^{2+2\epsilon}\log (2T)+X^{2+2\epsilon}\log (2T) \displaystyle\int_{1}^{T}\Big|\zeta^{m}_{K_1\ldots K_l}(1/2+\epsilon + it)\Big|^2\Big|U\left(1/2+\epsilon +it\right)\Big|^2 t^{-2}dt\\
& \ll X^{2+2\epsilon}\log (2T)+X^{2+2\epsilon}\log (2T) \displaystyle\int_{1}^{T}\left(t^{\frac{md_1\ldots d_l}{6}+\epsilon}\right)^2 t^{-2}dt\\
& \ll X^{2+2\epsilon}\log (2T)+X^{2+2\epsilon}\log (2T)T^{\frac{md_1\ldots d_l}{3}-1}.
\end{align*}

Again using inequalities \eqref{sei-equation} and \eqref{zeta-power-estimate-1}, for $j=2, 3$ we get
\begin{align*}
I_j(x)&\ll \displaystyle\int_{1/2+\epsilon}^{1+\epsilon}x^{\sigma}\big|\zeta^{m}_{K_1\ldots K_l}(\sigma + iT)\big| T^{-1}d\sigma \ll \max\limits_{1/2+\epsilon\leq \sigma\leq 1+\epsilon} x^{\sigma}T^{\frac{md_1\ldots d_l}{3}(1-\sigma)+\epsilon}T^{-1}\\
& \ll \frac{x^{1+\epsilon}}{T}+x^{1/2+\epsilon}T^{\frac{md_1\ldots d_l}{6}-1+\epsilon}.
\end{align*}

Therefore, for $j=2, 3$ we have
\[
\displaystyle\int_{1}^{X}I_j^2(x)dx\ll \frac{X^{3+\epsilon}}{T^2}+X^{2+2\epsilon}T^{\frac{md_1\ldots d_l}{3}-2+2\epsilon}.
\]

Let us choose, $T=X^{\frac{3}{md_1\ldots d_l+3}}.$ For this choice of $T$, from above calculations, we get

\[
\displaystyle\int_{1}^{X}\Delta^2 (x)dx = O(X^{3-\frac{6}{md_1\ldots d_l+3}+\epsilon}).
\] $\hfill\Box$

\section{Proof of Theorem \ref{funccc}}

\noindent
For $a\geq 1,$ we write $L_K(s)=\displaystyle\sum_{n = 1}^{\infty}\frac{\sigma_a^K(n)}{n^s}.$ From the definition of $\sigma_a^K(n)$, it is easy to see that $L_K(s)$ is convergent for $Re(s)>1+a.$

\noindent
Observe that, $L_K(s)=\zeta(s)\zeta_K(s-a).$ Now by using Lemma \ref{perron}, we get 

\begin{equation*}
\displaystyle\sum_{n \leq x}\sigma_a^K(n)=\frac{1}{2 \pi i}\displaystyle\int_{b-iT}^{b+iT}L_K(s)\frac{x^s}{s}ds + O\left(\frac{x^{1+a + \epsilon}}{T}\right),
\end{equation*}
where $b=1+a + \epsilon$ and $T$ is a parameter satisfying $1\leq T < x^{1+a}$ and to be appropriately chosen later.

\noindent
Now, we move the line of integration to $Re(s)=\frac{1}{2}+a $, and consider the rectangular contour formed by joining the points $\frac{1}{2}+a - iT, \frac{1}{2}+a + iT, b+iT \mbox{ and } b-iT$ successively. 
Since $\zeta(s)$ is bounded along the above vertical and horizontal line, so it is enough to consider the behaviour of Dedekind zeta function $\zeta_K(s-a).$ Also observe that, at $s=\frac{1}{2} +a+it$ the value of $\zeta_K(s-a)$ is $\zeta_K(\frac{1}{2}+it).$
\vspace{2mm}
 
\noindent
From this point onwards, by following the same argument as given in the proof Theorem \ref{main-th}, we get the desired result. $\hfill\Box$

\section{Proof of Theorem \ref{sum-of-divisor-function}}

\noindent
We consider $L(s)=\displaystyle\sum_{n=1}^{\infty}\frac{\sigma_{a}^{K_1}(n)\sigma_{b}^{K_2}(n)}{n^s}$. From the definition of $\sigma_{a}^{K_j}(n)$, $j=1, 2$ it is clear that $L(s)$ converges absolutely for $Re(s) > 1+a+b$.

\noindent
Now from Lemma \ref{compositum-lemma}, it easily follows that $\sigma_{a}^{K_1}(n)\sigma_{b}^{K_2}(n)$ is a multiplicative function of $n$. Also, we notice that for a number field $K$, 

\begin{equation*}\label{last-theorem-equation}
\sigma_{a}^{K}(p) = \displaystyle\sum_{N(\mathfrak{a}) \mid p} N(\mathfrak{a})^{a} = 1 + \displaystyle\sum_{N(\mathfrak{a})=p}p^a = 1 + p^{a} \displaystyle\sum_{N(\mathfrak{a}) = p}1 = 1 + a_{K}(p)\cdot p^{a}.
\end{equation*}

\noindent
Now, by Lemma \ref{lu-acta-math-hungarica} and the following equality 

\begin{equation*}
a_{K}(p)=\left\{\begin{array}{ll}
p; \mbox{ if } p \mbox{ splits completely }\\
0; \mbox{ otherwise }.
\end{array}\right.
\end{equation*}

\noindent
we get for $Re(s) > 1+a+b$

\begin{eqnarray*}
L(s) &=&\displaystyle\sum_{n=1}^{\infty}\frac{\sigma_{a}^{K_1}(n)\sigma_{b}^{K_2}(n)}{n^s}\\ &=& \displaystyle\prod_{p}\left(1 + \frac{\sigma_{a}^{K_1}(p)\sigma_{b}^{K_2}(p)}{p^s} + \ldots \right)\\ &=& \displaystyle\prod_{p}\left(1 + \frac{1 + p^{a}a_{K_1}(p) + p^{b}a_{K_2}(p) + p^{a+b} a_{K_1 K_2}(p)}{p^s} + \ldots\right)
\end{eqnarray*}
for all but finitely many prime numbers $p$.

\vspace{2mm}

\noindent
Now, comparing this with the Euler product expansion of $\zeta(s)\cdot \zeta_{K_1}(s-a)\cdot \zeta_{K_2}(s-b)\cdot \zeta_{K_1 K_2}(s-a-b)$, we see that the coefficients of $p^{-s}$ are same for all but finitely many primes $p$. Hence, by Lemma \ref{Dirichlet-series-comparison}, we have $L(s)=\zeta(s)\cdot \zeta_{K_1}(s-a)\cdot \zeta_{K_2}(s-b)\cdot \zeta_{K_1 K_2}(s-a-b)\cdot U'(s)$, where $U'(s)$ is a Dirichlet series, which is absolutely convergent for $Re(s) > \frac{1}{2}+a+b$.

\vspace{2mm}

\noindent
Again by following the same argument as given in the proof Theorem \ref{main-th}, we get our desired result. $\hfill\Box$

\vspace{2mm}

\section{Proof of Corollary \ref{corollary 1}}
\noindent
We know that the $L$-series corresponding to $\displaystyle\sum_{n \leq x} \tau_{k} ^{K}(n)$ is $\zeta_{K} ^{k}(s)$. Moreover, $\zeta_{K}(s)$ satisfies the functional equation $$\left(\frac{\sqrt{|D_K|}}{2\pi}\right)^s \Gamma (s) \zeta _{K} (s) = \left(\frac{\sqrt{|D_K|}}{2\pi}\right)^{1-s} \Gamma (1-s) \zeta _{K} (1-s) \mbox{ if } D_{K} < 0$$ and $$\left(\frac{\sqrt{|D_K|}}{\pi}\right)^s \Gamma^{2} \left(\frac{s}{2}\right) \zeta _{K} (s) = \left(\frac{\sqrt{|D_K|}}{\pi}\right)^{1-s} \Gamma^{2} \left(\frac{1-s}{2}\right) \zeta _{K} (1-s) \mbox{ if } D_{K} > 0.$$

\noindent
Now, suppose first that $K$ is an imaginary quadratic field. Then $\zeta_{K} ^{k}(s)$ satisfies the functional equation $$\left(\frac{\sqrt{|D_K|}}{2\pi}\right)^{ks} \Gamma^{k} (s) \zeta _{K}^{k} (s) = \left(\frac{\sqrt{|D_K|}}{2\pi}\right)^{k(1-s)} \Gamma^{k} (1-s) \zeta _{K}^{k} (1-s)$$
Therefore, keeping the same notations as in Theorem \ref{omega-type-theorem-kc-rn}, we have $\Delta (s) \tilde{\phi}(s) = \Delta (1-s) \tilde{\phi} (1-s)$, where $M=k$, $\delta = 1$, $\tilde{\phi} (s)=\left(\frac{\sqrt{|D_K|}}{2\pi}\right)^{ks}  \zeta _{K}^{k} (s)$, $\Delta (s) =\Gamma^{k} (s)$ and $\lambda_n = \left(\frac{\sqrt{|D_K|}}{2\pi}\right)^k \cdot n$.

\vspace{2mm}

\noindent
Thus we see that $\phi (s)=\zeta_{K}^{k}(s)$ is a solution of $\Delta (s) \phi (s) = \Delta (1 - s) \psi (1 - s)$. Hence from Theorem \ref{omega-type-theorem-kc-rn}, our result follows. Similarly, working with the functional equation for real qaudratic fields and using Theorem \ref{omega-type-theorem-kc-rn}, we get the desired result for real quadratic field case.

\bigskip

\noindent
{\bf Acknowledgements.} We are grateful to Prof. R. Thangadurai and Dr. Anirban Mukhopadhyay for their support, encouragement throughout the project and going through the manuscript thoroughly. The second author sincerely acknowledges Prof. R. Balasubramanian and Mr. Priyamvad Srivastav for having several fruitful discussions. The work has been completed when the second author was visiting Harish-Chandra Research Institute and he acknowledges the excellent hospitality and facilities provided by the Institute.

\end{document}